%%%%%%%%%%%%%%%%%%%% author.tex %%%%%%%%%%%%%%%%%%%%%%%%%%%%%%%%%%%
%
% sample root file for your "contribution" to a contributed volume
%
% Use this file as a template for your own input.
%
%%%%%%%%%%%%%%%% Springer %%%%%%%%%%%%%%%%%%%%%%%%%%%%%%%%%%

% RECOMMENDED %%%%%%%%%%%%%%%%%%%%%%%%%%%%%%%%%%%%%%%%%%%%%%%%%%%
\documentclass[graybox]{svmult}

% choose options for [] as required from the list
% in the Reference Guide

\usepackage{mathptmx}       % selects Times Roman as basic font
\usepackage{helvet}         % selects Helvetica as sans-serif font
\usepackage{courier}        % selects Courier as typewriter font
\usepackage{type1cm}        % activate if the above 3 fonts are
                            % not available on your system
%
\usepackage{makeidx}         % allows index generation
\usepackage{graphicx}        % standard LaTeX graphics tool
                             % when including figure files
\usepackage{multicol}        % used for the two-column index
\usepackage[bottom]{footmisc}% places footnotes at page bottom
\usepackage{amsmath, amsfonts,  amssymb}

% see the list of further useful packages
% in the Reference Guide

\newcommand{\N}{\mathbb{N}}
\newcommand{\Z}{\mathbb{Z}}

\newcommand{\R}{\mathbb{R}}

\renewcommand{\P}{\mathbb{P}}
\renewcommand{\E}{\mathbb{E}}
\newcommand{\A}{\mathcal{A}}

\renewcommand{\S}{\mathcal{S}}

\newcommand{\cL}{\mathcal{L}}

\newcommand{\cX}{\mathcal{X}}

\newcommand{\mc}[1]{{\mathcal #1}}

\newcommand{\bb}[1]{{\mathbb #1}}

\renewcommand{\epsilon}{\varepsilon}

\usepackage{color}

\newtheorem{ass}{\textsc{Assumption}}[section]

\renewcommand\vec[1]{\overrightarrow{#1}}
\newcommand\vecleft[1]{\overleftarrow{#1}}

\makeindex             % used for the subject index
                       % please use the style svind.ist with
                       % your makeindex program

%%%%%%%%%%%%%%%%%%%%%%%%%%%%%%%%%%%%%%%%%%%%%%%%%%%%%%%%%%%%%%%%%%%%%%%%%%%%%%%%%%%%%%%%%

\begin{document}

\title*{Second order Boltzmann-Gibbs principle for polynomial functions and applications}

\author{Patr\' icia Gon\c calves, Milton Jara and Marielle Simon}
\institute{Patr\' icia Gon\c{c}alves \at Center for Mathematical Analysis,  Geometry and Dynamical Systems,
Instituto Superior T\'ecnico, Universidade de Lisboa,
Av. Rovisco Pais,1,  1049-001 Lisboa, Portugal.
\at \email{patricia.goncalves@math.tecnico.ulisboa.pt} \and 
Milton Jara \at Instituto de Matem\'atica Pura e Aplicada, Estrada Dona Castorina 110, 22460-320  Rio de Janeiro, Brazil. \at \email{mjara@impa.br} \and 
 Marielle Simon \at \'Equipe MEPHYSTO, Inria Lille -- Nord Europe, 40 avenue du Halley,
59650 Villeneuve d'Ascq, France. \at\email{marielle.simon@inria.fr}}

\maketitle

\abstract{In this paper we give a new proof of the second order Boltzmann-Gibbs principle introduced in \cite{gj2014}. The proof does not impose the knowledge on the  spectral gap inequality for the underlying model and it relies on a proper decomposition of the antisymmetric part of the current of the system in terms of polynomial functions. In addition, we fully derive the convergence of the equilibrium  fluctuations towards 1) a trivial process in case of super-diffusive systems, 2) an Ornstein-Uhlenbeck process or the unique energy solution of the stochastic Burgers equation, as defined in \cite{GubJar,GubPer}, in case of weakly asymmetric diffusive systems. Examples and applications are presented for weakly and partial asymmetric  exclusion processes, weakly  asymmetric speed change exclusion processes and hamiltonian systems with exponential interactions.\keywords{ Boltzmann-Gibbs principle, equilibrium fluctuations, stochastic Burgers equation, Ornstein-Uhlenbeck process.}}

\section{Introduction}
\label{sec:1}

The classical Boltzmann-Gibbs principle, introduced in  \cite{TB}, states that the space-time fluctuations of any local field associated to a conservative model can be written as a linear functional of the conservative field, denoted here by $\mathcal{Y}_t^n$.
% \[\mathcal{Y}_t^n(\vphi):= \frac{1}{\sqrt n} \sum_{x \in \bb Z} (\eta_{tn^a}(x)-\rho) \vphi\Big(\frac{x}{n}\Big),\qquad \vphi \in \mc S(\bb R).\]
 A second-order Boltzmann-Gibbs principle has been introduced in  \cite{gj2014} in order to investigate the first-order correction of this limit, in which case it is given by a quadratic functional of the conservative field $\mathcal{Y}_t^n$. 
 
 In  \cite{gj2014}  the proof of that result was based on a multiscale analysis as done in  \cite{G}, assuming that the underlying particle system is of exclusion type and for which a spectral gap inequality holds. Then, it has been extended to other dynamics, like, for example,  zero-range models in \cite{GJS}. We give here a new proof of that second-order Boltzmann-Gibbs principle, without requiring a  spectral gap inequality. The latter was a crucial ingredient in both \cite{gj2014,GJS}. More precisely, the multiscale analysis was exposed in two main steps: the first one is reminiscent from the well-known one-block estimate, which consists in replacing a local function by its average on a microscopic block;  the second one is reminiscent from the well-known two-blocks estimate and consists in a key iterative bound to replace the aforementioned average on a microscopically big block by an average on a macroscopically small block.

Here we look at specific local fields whose additive functionals can be written as polynomials. We follow step by step the multiscale analysis argument, after decomposing suitably the polynomials, in such a way that there is no need to apply a spectral gap inequality.
In \cite{FGS}, Franco, Gon\c calves and Simon already improved the proof of this second-order Boltzmann Gibbs principle  in order to fit exclusion processes with one slow bond, for which the arguments of \cite{gj2014,GJS} do not apply.

In addition, here we prove the convergence of the fluctuation field $\mc Y_t^n$. Provided that the second order Boltzmann-Gibbs principle is satisfied, we can formulate some simple consequences from it. The first one, is that for super-diffusive systems -- for example,  the asymmetric simple exclusion process -- the density fluctuation field, when properly centered and re-scaled, does not evolve up to a certain time scale.  For diffusive systems -- for example,  weakly asymmetric rates as considered in   \cite{gj2014} -- it is known  that  the sequence of processes $\{\mc Y_t^n\}_{n\in\bb N}$ is tight. From our Botzmann-Gibbs principle we can also prove that any of the limit points of $\{\mc Y_t^n\}_{n\in\bb N}$ is an energy solution of the stochastic Burgers equation as done in  \cite{gj2014}.  In order to characterize the convergence for this type of processes we notice that very recently, Gubinelli and Perkowski \cite{GubPer} obtained the uniqueness of  energy solutions as defined in \cite{GubJar}. From our estimates it is simple to check that the limit points of $\{\mc Y_t^n\}_{n\in\bb N}$ are concentrated on energy solutions of the stochastic Burgers equation  in the sense of \cite{GubPer}, from which the convergence of the sequence $\{\mc Y_t^n\}_{n\in\bb N}$ follows.

To sum up, in this paper we complete the result of \cite{gj2014} and we extend the field of its applications to new interacting particle systems. There are still other models that could be solved by our new approach, like kinetically constrained exclusion processes and zero-range processes, for which the spectral gap inequality is not known. This is a subject for future work.

Here follows an outline of this paper. In Section \ref{sec:framework} we present our main results: the second order Boltzmann-Gibbs principle and the convergence of the sequence of fluctuation fields for systems evolving in different regimes of time. We also give a quick review on the notion of stationary energy solutions of the stochastic Burgers equation and we explain how to obtain the convergence of the fluctuation field to the energy solution of the stochastic Burgers equation, starting from the second-order Boltzmann-Gibbs principle. Section \ref{sec:application} is devoted to applications of this principle to several models. In Sections \ref{sec:BG} and \ref{higher polynomial} we give the complete proof of the main tool, namely the second-order Boltzmann-Gibbs principle, for degree two polynomial functions and higher degree functions, respectively.

\section{Framework and statement of the results}
\label{sec:framework}

In this section we introduce the notation and the main results of this paper. In order to make the presentation as general as we can,  we consider the interacting particle systems evolving in a certain time scale and we detail all the assumptions that we need for our method to work.

\subsection{The microscopic dynamics}

Let $n\in\bb N$ be a scaling parameter and fix $a>0$. We are interested in the evolution of a Markov process $\{\eta^n_{tn^a}(x)\; ;\;x\in\Z,  t\geq 0\}$ in the accelerated time scale $tn^a$, defined through its infinitesimal generator $n^a\cL_n$. This process belongs to the class of conservative one-dimensional interacting particle systems, with state space  $\Omega:=\cX^\Z$.  For instance, if the model is of exclusion type, then $\cX=\{0,1\}$ (so that there is at most one particle per site), whereas for hamiltonian oscillators, $\cX=\R$ or $\R^2$ (the dynamics being on positions and velocities).

We need three assumptions for our method to work. The first one involves the invariant measures, more precisely:

\begin{ass}[Invariant measures] \label{ass1} 
\quad

We assume that the process has a family of invariant measures denoted by $\{\nu_\rho \; ; \; \rho \in I\}$, where $I$ represents the range of values for the parameter. These measures are associated to the conserved quantity: $\sum_{x\in\Z} \eta(x),$ which we call \emph{density}. For any $\rho\in I$ we assume that
\begin{enumerate}
\item[(i)] $\nu_\rho$ is a product measure on $\Omega$;
\item[(ii)] $\nu_\rho$ is invariant by translation, so that $\int_\Omega \eta(x) \nu_\rho(d\eta)=\rho$ for any $x\in\bb Z$;
\item[(iii)] $\nu_\rho$ has finite first moments,  \[\int_\Omega |\eta(x)|^k \nu_\rho(d\eta) < +\infty, \qquad \text{for } k=2,3,4.\]
\end{enumerate}
\end{ass}
\begin{remark} Assumptions $(i)$ and $(ii)$ above imply that $\nu_\rho$ is invariant with respect to the change of variables $\eta \mapsto \eta^{z,z+1}$, for any $z\in\bb Z$, with
  $\eta^{z,z+1} \in \Omega$ given by
  \begin{equation*}
\eta^{z,z+1}(x)=
\begin{cases}\eta(z+1);& x=z,\\
\eta(z);& x=z+1,\\
\eta(x);& x\neq z,z+1.\\
\end{cases}
\end{equation*}
In fact, we do not need to require $\nu_\rho$ to be product, but it should  be at least invariant under the permutation of nearest neighbouring coordinates, and translation invariant. In that case, we would also have to assume that, for any $f \in \bb L^2(\nu_\rho)$ and $v:\mathbb{Z}\to{\mathbb{R}}$, the following bound holds:
\[
\int \bigg(\sum_{x\in\bb Z} v(x) \tau_x f(\eta) \bigg)^2 \nu_\rho(d\eta) \leq C \sum_{x\in\bb Z} v^2(x),
\] where, for $x\in\mathbb{Z}$, we denote by $\tau_x$ the translated operator that acts on a function $h:\Omega\to\bb R$ as $(\tau_x h)(\eta):=h(\tau_x \eta)$, and $\tau_x \eta$ is the configuration obtained from $\eta$ by shifting:  for $y\in\mathbb{Z}$, $(\tau_x \eta)_y=\eta_{x+y}$.  \end{remark}

We denote by $\bar{\eta}(x)=\eta(x)-\rho$ the centered variable and $\chi(\rho)$ the variance:
\[
\chi(\rho):=\int_\Omega \big(\eta(x)-\rho\big)^2 \;\nu_\rho(d\eta).
\]
Let us fix once and for all $\rho \in I$. The generator $\cL_n$ can be decomposed in $\mathbb{L}^2(\nu_\rho)$  into its symmetric and antisymmetric parts, more precisely we write $$\cL_n=\A_n+\S_n,$$ where $$\mathcal{S}_n=(\mathcal{L}_n+\mathcal{L}_n^*)/2\quad \textrm{and} \quad \mathcal{A}_n=(\mathcal{L}_n-\mathcal{L}_n^*)/2,$$
with $\mathcal{L}_n^*$ being the adjoint of $\mathcal{L}_n$ in $\mathbb{L}_n^2(\nu_\rho)$. By the conservation law, for any $x\in\bb Z$, there exists a function $j^n_{x,x+1}$ defined on $\Omega$ such that
\begin{equation*}
\mathcal{L}_n\eta(x)=j^n_{x-1,x}(\eta)-j^n_{x,x+1}(\eta)
\end{equation*}
and $j^n_{x,x+1}$ is called the instantaneous current of the system at the bond $\{x,x+1\}$. To fix notation we denote \begin{equation*}
\mathcal{S}_n\eta(x)=j^{n,s}_{x-1,x}(\eta)-j^{n,s}_{x,x+1}(\eta)\quad\textrm{and}\quad \mathcal{A}_n\eta(x)=j^{n,a}_{x-1,x}(\eta)-j^{n,a}_{x,x+1}(\eta),
\end{equation*}
so that $j^n_{x,x+1}=j^{n,s}_{x,x+1}+j_{x,x+1}^{n,a}$.
We denote by $D_n(f)$ the Dirichlet form associated to the Markov process, which is defined on local functions $f\in\mathbb{L}^2(\nu_\rho)$ as \[D_n(f)=-\int_\Omega f(\eta)\mathcal{L}_nf(\eta)\nu_\rho(d\eta)= -\int_\Omega f(\eta)\mathcal{S}_nf(\eta)\nu_\rho(d\eta).\]
The second assumption that we need is the following:
\begin{ass}[Dirichlet form] \label{ass2} 
\quad

There exists a bounded function $\zeta^n_{0,1}:\Omega\to [\delta,\delta^{-1}]$ with $\delta >0$ such that the Dirichlet form reads as
$D_n(f)=\sum_{z \in \bb Z}  I^n_{z,z+1}(f), $ where
\begin{equation}I^n_{z,z+1}(f):=\int_\Omega\zeta^n_{z,z+1}(\eta)(\nabla_{z,z+1}f(\eta))^2\;\nu_\rho(d\eta),\label{eq:diri}\end{equation}
with $\zeta^n_{z,z+1}(\eta):=\tau_z\zeta^n_{0,1}(\eta)$,  $\nabla_{z,z+1}f(\eta)=f(\eta^{z,z+1})-f(\eta)$.
\end{ass}

Assumption \ref{ass2} may look restrictive but is actually valid for many models of interest. For example, lattice gas dynamics, either symmetric or asymmetric, with positive jumps rates, fall into this category (see Sections  \ref{sec:wasep} and \ref{sec:asep} below).

 The path space of right-continuous and left-limits trajectories with values in $\Omega$ is denoted by $\mathcal{D}(\R_+,\Omega)$. For any initial probability measure $\mu$ on $\Omega$, we denote by $\bb P^n_{\mu}$ the probability measure on  $\mathcal{D}(\R_+,\Omega)$ induced by $\mu$ and the Markov process $\{\eta^n_{tn^a}(x)\; ;\;x\in\Z,\; t\geq 0\}$. If $\mu=\nu_\rho$ we denote $\P^n_{\rho}=\P^n_{\nu_\rho}$ and its expectation by $\E^n_{\rho}$.

Our process of interest is the \emph{density fluctuation field}, defined on functions $H$ in the Schwartz space $\mathcal{S}(\mathbb{R})$, as
\[\mathcal{Y}^n_t(H)=\frac{1}{\sqrt n}\sum_{x\in\mathbb{Z}}H\Big(\frac{x}{n}\Big)\big(\eta^n_{tn^a}(x)-\rho\big).\]
Note that, by Dynkin's formula, for $H\in\mathcal{S}(\mathbb{R})$
\begin{equation*}
M_t^n(H):=\mathcal{Y}^n_t(H)-\mathcal{Y}^n_0(H)-\int_{0}^{t}n^a\mathcal{L}_n(\mathcal{Y}^n_s(H)) ds
\end{equation*}
is a martingale. Let us define
\[
\nabla_nH\Big(\frac{x}{n}\Big):=n\Big[H\Big(\frac{x+1}{n}\Big)-H\Big(\frac{x}{n}\Big)\Big],\ \Delta_nH \Big(\frac{x}{n}\Big):=n\Big[\nabla_nH\Big(\frac{x}{n}\Big)-\nabla_nH\Big(\frac{x-1}{n}\Big)\Big].
\]
 A simple computation shows that the integral part of $M_t^n(H)$ can be written as
\begin{equation}\label{eq:integral}
\mathcal{I}_t^n(H):=\int_{0}^{t}\frac{n^{a-1}}{\sqrt n}\sum_{x\in\mathbb{Z}}\nabla_nH\Big(\frac{x}{n}\Big)\big(j^n_{x,x+1}(\eta^n_{sn^a})-\E^n_{\rho}[j^n_{x,x+1}(\eta)]\big)ds.
\end{equation}
Finally, our last assumption is related to the decomposition of the current.  For a function $\psi:\Omega\to\mathbb{R}$, let us define the centered variable $$\bar{\psi}(\eta):=\psi(\eta)-\E^n_{\rho}[\psi(\eta)].$$
\begin{ass}[Instantaneous current] \label{ass3} 
\quad

There exists a local function $h:=h(n):\Omega\to\mathbb{R}$ and a constant $C:=C(n)$, such that for every $x\in\mathbb{Z}$, $$\bar{j}_{x,x+1}^{n,s}(\eta)=\tau_xh(\eta)-\tau_{x+1}h(\eta)\quad \textrm{and} \quad \bar{j}_{x,x+1}^{n,a}(\eta)=C\bar{\eta}(x)\bar{\eta}(x+1)+\tau_xg(\eta),$$
where $g:=g(n):\Omega\to\mathbb{R}$ is a local function such that for all $ H \in\mathcal {S}(\bb R)$
\begin{equation}\label{term1}
\lim_{n\to\infty}\mathbb{E}^n_{\rho}\Big[\Big(\int_{0}^{t}\frac{n^{a-1}}{\sqrt n}\sum_{x\in\mathbb{Z}}\nabla_nH\Big(\frac{x}{n}\Big)\tau_xg(\eta^n_{sn^a})ds\Big)^2\Big]=0.
\end{equation}
\end{ass}
\begin{remark}
The last condition \eqref{term1} may look strong, but what me mean here is that the error produced by $g$ is small compared to the one induced by the degree two polynomial $\bar\eta(x)\bar\eta(x+1)$, as it is the case for example for polynomials of degree greater or equal than 3 (see Theorem \ref{theo:BG3} below).
\end{remark}
According to the Assumption \ref{ass3} and by a summation by parts, we can rewrite $\mathcal{I}_t^n(H)$ in the following way:
\begin{equation}\label{term2}
\int_{0}^{t}\frac{n^{a-2}}{\sqrt n}\sum_{x\in\mathbb{Z}}\Delta_nH\Big(\frac{x}{n}\Big)\tau_xh(\eta^n_{sn^a})ds+ C\int_{0}^{t}\frac{n^{a-1}}{\sqrt n}\sum_{x\in\mathbb{Z}}\nabla_nH\Big(\frac{x}{n}\Big)\bar{\eta}^n_{sn^a}(x)\bar{\eta}^n_{sn^a}(x+1)ds,
\end{equation}
plus a term which is negligible in $\mathbb{L}^2(\mathbb{P}^n_{\rho})$ and given in \eqref{term1}.
We notice that the first claim of the previous assumption is satisfied by models which are of gradient type. Since for the models of interest $a\leq 2$, to treat the term on the left hand side of \eqref{term2} one can use the classical Boltzmann-Gibbs principle introduced in \cite{TB} and the treatment of the term on the right hand side of \eqref{term2} is the main purpose of this paper. More precisely, we look at the first-order correction for the usual limit projection of space-time fluctuations of the latter specific field. We focus on the additive  functional  of $\bar{\eta}(x)\bar{\eta}(x+1)$  and show how its fluctuations can be written as a linear functional of the conservative field  $\mathcal{Y}^n_t(H)$ plus a quadratic functional of this same field. The crucial point on the proof of this result relies on sharp quantitative bounds on the error that we are able to obtain when we perform the aforementioned replacement.

\subsection{The second-order Boltzmann-Gibbs Principle}
In the following, we simply write $\eta_{tn^a}$ for $\eta^n_{tn^a}$, for the sake of  clarity.
For any $v:\mathbb{Z}\to\bb R$ square summable, we denote:
\begin{equation}\label{vinl2}
 \|v\|_{2,n}^2:=n^{-1}\sum_{x\in\mathbb{Z}}v^2(x)<\infty.\end{equation}
 
\begin{theorem}[Second-order Boltzmann-Gibbs principle]

\label{theo:BG}
 There exists a constant $C=C(\rho)>0$ such that, for any $ L\in\N$ and  $t>0$, and for any function $v\in\ell^2(\mathbb{Z})$:
\begin{multline}
\label{eq:BGexpo}
\mathbb{E}^n_{{\rho}}\Big[\Big(\int_{0}^t \sum_{x\in\mathbb{Z}}v(x)\Big\{\bar{\eta}_{sn^{a}}(x)\bar{\eta}_{sn^{a}}(x+1)-\big(\vec{\eta}_{sn^{a}}^{L}(x)\big)^2+\frac{\chi(\rho)}{L}\Big\}ds\Big)^2\Big]\\
\leq Ct\Big\{\frac{L}{n^{a-1}}+\frac{tn}{L^2}\Big\}\|v\|_{2,n}^2
\end{multline}
where
\begin{equation*}
\vec{\eta}^{L}(x)=\frac{1}{L}\sum_{y=x+1}^{x+L}\bar{\eta}(y).
\end{equation*}
\end{theorem}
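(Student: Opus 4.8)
The plan is to run the classical two-step multiscale scheme — a one-block replacement followed by a doubling-of-the-block iteration — but directly on the quadratic observable, using only the Dirichlet form structure of Assumption~\ref{ass2} in place of a spectral gap bound. Write $V_{x}(\eta) := \bar\eta(x)\bar\eta(x+1)$ and $W^{L}_{x}(\eta) := (\vec\eta^{L}(x))^{2} - \chi(\rho)/L$, so that the integrand is $\sum_{x} v(x)\{V_x - W^{L}_x\}$. The starting point is the standard variational (Kipnis--Varadhan type) inequality: for a mean-zero function $\Phi\in\mathbb{L}^2(\nu_\rho)$,
\begin{equation*}
\mathbb{E}^n_\rho\Big[\Big(\int_0^t \Phi(\eta_{sn^a})\,ds\Big)^2\Big] \;\leq\; C\,t\, n^{-a}\,\big\langle \Phi, (-\mathcal{S}_n)^{-1}\Phi\big\rangle_{\nu_\rho} \;=\; C\,t\,n^{-a}\sup_{f}\Big\{2\langle \Phi, f\rangle_{\nu_\rho} - n^{a} D_n(f)\Big\},
\end{equation*}
where the supremum is over local $f\in\mathbb{L}^2(\nu_\rho)$. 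Here I use that $\Phi$ will be built from symmetric-under-exchange pieces so that only $\mathcal{S}_n$ enters, and the factor $n^{a}$ from the time acceleration is carried along. Everything then reduces to estimating this $H_{-1}$-type norm.

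The key algebraic step — and the substitute for the spectral gap — is a telescoping decomposition of $V_x - W^{L}_x$ into gradients that are exactly produced by the exchange dynamics. One writes $\bar\eta(x)\bar\eta(x+1) - (\vec\eta^{L}(x))^2 + \chi(\rho)/L$ as a sum over scales $\ell = 1, 2, \dots, L$ (or dyadically, $\ell = 2^{k}$) of differences of the form $(\vec\eta^{\,2\ell}(x))^2 - \tfrac12[(\vec\eta^{\,\ell}(x))^2 + (\vec\eta^{\,\ell}(x+\ell))^2]$ plus lower-order variance corrections $\chi(\rho)(\tfrac{1}{\ell}-\tfrac{1}{2\ell})$; each such term is, up to the additive constant which is killed by centering, equal to $-\tfrac14(\vec\eta^{\,\ell}(x) - \vec\eta^{\,\ell}(x+\ell))^2$, i.e.\ a square of a discrete gradient of block averages. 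Crucially, $\vec\eta^{\,\ell}(x) - \vec\eta^{\,\ell}(x+\ell)$ can be realized as $\mathcal{S}_n$ applied to a bounded test function (a weighted sum of site variables), so that pairing against it in the variational formula and using Young's inequality $2\langle \Phi, f\rangle \le \kappa^{-1}\langle \Phi,(-\mathcal{S}_n)^{-1}\Phi\rangle + \kappa D_n(f)$ at each scale, combined with the uniform lower bound $\zeta^n \ge \delta$ from Assumption~\ref{ass2}, closes the estimate scale by scale. The error per scale $\ell$ has the familiar form $t(n^{1-a}\ell + tn/\ell^{2})$; summing the dyadic contributions over $\ell \le L$ gives $t(n^{1-a}L + tn/L^{2})$, weighted by $\|v\|_{2,n}^2$, which appears through $\sum_x v^2(x) \cdot n^{-1}$ exactly because of the product structure of $\nu_\rho$ and the translation bound quoted in the Remark after Assumption~\ref{ass1}.

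The step I expect to be the main obstacle is controlling the cross-terms and the low-scale ($\ell = 1$) piece. When one expands $(\vec\eta^{\,2\ell}(x))^2$ in terms of the two half-blocks, one gets not just $(\vec\eta^{\,\ell}(x) - \vec\eta^{\,\ell}(x+\ell))^2$ but also a cross term $\vec\eta^{\,\ell}(x)\vec\eta^{\,\ell}(x+\ell)$, and disentangling the genuinely ``gradient'' part from what must be absorbed into the variational bound requires care; in particular the $\ell=1$ term is exactly $V_x$ itself and must be handled by a direct one-block estimate comparing $\bar\eta(x)\bar\eta(x+1)$ with $(\vec\eta^{1}(x))^2$ — but these are literally equal, so the genuine first step is $\ell=1\to\ell=2$, and one must check the telescoping opens correctly there. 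A secondary technical point is that the test functions used at each scale must be chosen with $\ell^2$-summable coefficients in $x$ and weighted by $v(x)$, and one must verify that the Dirichlet-form cost of the sum over all $x$ of these localized test functions does not exceed $C\|v\|_{2,n}^2$ times the per-scale factor — this is where the product hypothesis on $\nu_\rho$ (or the bound in the Remark) is used, together with the fact that for each $\ell$ the test functions at well-separated $x$ act on disjoint bonds. Once these bookkeeping issues are settled, summing the geometric-type series in $\ell$ and optimizing is routine and yields precisely \eqref{eq:BGexpo}.
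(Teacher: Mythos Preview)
There is a genuine gap in the proposed argument, and it sits precisely at the step you flag as ``crucial''. You reduce each scale to the term $-\tfrac14(\vec\eta^{\,\ell}(x) - \vec\eta^{\,\ell}(x+\ell))^2$ and then claim that because the \emph{linear} difference $\vec\eta^{\,\ell}(x) - \vec\eta^{\,\ell}(x+\ell)$ is in the range of $\mathcal S_n$, its \emph{square} can be handled in the variational formula. But the $H_{-1}$ bound works by writing the integrand as twice its half and performing the exchange $\eta\mapsto\eta^{z,z+1}$ to extract a factor $f(\eta)-f(\eta^{z,z+1})$; this requires the integrand (modulo a spectator) to be \emph{antisymmetric} under that exchange. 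A squared gradient is \emph{symmetric} under $\eta\mapsto\eta^{z,z+1}$, so the trick produces nothing. Trying to fall back on Cauchy--Schwarz in time for these squares gives an error $\sim t^2 n/\ell^2$ at each dyadic scale $\ell$, and the geometric sum is dominated by the smallest scale, yielding $t^2 n$ rather than $t^2 n/L^2$. So neither route closes as stated, and the per-scale error $t(n^{1-a}\ell + tn/\ell^2)$ you quote is not what your decomposition actually yields.

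The paper avoids this obstruction by never producing a bare squared gradient in the iteration. Instead it writes every term at every scale as $\varphi(\tau_x\eta)\cdot(\text{linear gradient})$ with the support of the spectator $\varphi$ \emph{disjoint} from the sites in the gradient, so that the exchange leaves $\varphi$ untouched. Concretely, it introduces a \emph{left} block average $\vecleft\eta^{\ell}(x)$ alongside $\vec\eta^{\ell}(x)$ and alternates: replace $\bar\eta(x+1)$ by $\vec\eta^{\ell_0}(x)$ with spectator $\bar\eta(x)$; replace $\bar\eta(x)$ by $\vecleft\eta^{\ell_0}(x)$ with spectator $\vec\eta^{\ell_0}(x)$; then double the right block with spectator $\vecleft\eta^{\ell_k}(x)$ and double the left block with spectator $\vec\eta^{\ell_{k+1}}(x)$, iterating up to $L$. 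The crucial gain is that the spectator at scale $\ell_k$ has $\bb L^2$-norm $\sim \ell_k^{-1/2}$, so the per-scale error is $t\,\ell_k^2 n^{1-a}\cdot\ell_k^{-1}=t\,\ell_k\, n^{1-a}$, summing to $tLn^{1-a}$. The only place a square appears is in the final step passing from $\bar\eta(x)\vec\eta^L(x)$ to $(\vec\eta^L(x))^2$: there the spectator $\vec\eta^L(x)$ \emph{does} overlap the gradient, and the exchange $\eta\mapsto\eta^{x,x+1}$ produces an extra term $\tfrac{1}{2L}(\bar\eta(x)-\bar\eta(x+1))^2$, which is handled by a single Cauchy--Schwarz at scale $L$ and gives exactly the $t^2n/L^2$ contribution. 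Your telescoping into squares of block differences is algebraically neat but loses exactly this spectator structure; to make the scheme work you will need to reintroduce it.
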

Last result can be extended to higher degree polynomials, provided that higher moments are finite: more precisely, if one wants to replace in \eqref{eq:BGexpo} the function $\bar\eta(x)\bar\eta(x+1)$ with a polynomial of degree $d$, then condition $(iii)$ in Assumption \ref{ass1} has to be  replaced by
  \[\int_\Omega |\eta(x)|^k \nu_\rho(d\eta) < +\infty, \qquad \text{for } k=2,...,2d.\]
 This generalization will be the main purpose of Section \ref{higher polynomial} below. Before that, let us present various  of its applications.

\subsection{Consequences of the Boltzmann-Gibbs Principle}

\subsubsection{Super-diffusive systems}

In this section we consider systems which fulfill the  assumptions above  and that evolve super-diffusively so that $a<2$. 
Recall from above that 
\begin{equation}\label{martdecom_1}
\begin{split}
M_t^n(H)=&\mathcal{Y}^n_t(H)-\mathcal{Y}^n_0(H)\\
-&
\int_{0}^{t}\frac{n^{a-2}}{\sqrt n}\sum_{x\in\mathbb{Z}}\Delta_nH\Big(\frac{x}{n}\Big)\tau_xh(\eta^n_{sn^a})ds\\+& C\int_{0}^{t}\frac{n^{a-1}}{\sqrt n}\sum_{x\in\mathbb{Z}}\nabla_nH\Big(\frac{x}{n}\Big)\bar{\eta}^n_{sn^a}(x)\bar{\eta}^n_{sn^a}(x+1)ds,
\end{split}
\end{equation}
plus a term which is negligible in $\mathbb{L}^2(\mathbb{P}^n_{\rho})$ and given in \eqref{term1}.

Since $a<2$ and $h$ is a local function,  a simple computation shows that the first time integral above vanishes in $\mathbb{L}^2(\mathbb{P}^n_{\rho})$, as $n$ goes to infinity. For the second one, we note that by the simple inequality $(x+y)^2\leq 2x^2+2y^2$, the second order Boltzmann-Gibbs principle stated above and  the Cauchy-Schwarz inequality, we can show, as in \cite{G},  that for $a<4/3$ it also vanishes in $\mathbb{L}^2(\mathbb{P}^n_{\rho})$, as $n$ goes to infinity. More details will be given ahead when we apply this result to some concrete  examples.
As a consequence we conclude the triviality of the fluctuations stated in the  next theorem.

\begin{theorem}[Trivial Limit]\label{th:triviality}

For any $a<4/3$, the sequence of processes $\{\mc Y^{n}_t\;;\; t \in [0,T]\}_{n \in \bb N}$ converges in distribution with respect to the Skorokhod topology of $\mc D([0,T]\;;\;\mc S'(\bb R))$, as $n\to\infty$, to the process $\mc Y_t$ given on $H\in\mc S(\bb R)$  by $\mc Y_t(H)=\mc Y_0(H)$.
\end{theorem}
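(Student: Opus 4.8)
The plan is to follow the standard two-step scheme for convergence of $\mathcal S'(\bb R)$-valued processes. First I would establish tightness of $\{\mathcal Y^n_\cdot\}_{n\in\bb N}$ in $\mathcal D([0,T];\mathcal S'(\bb R))$; then I would identify every limit point as the time-constant process $t\mapsto\mathcal Y_0$, where $\mathcal Y_0$ is the limit in distribution of $\mathcal Y^n_0$. That limit exists by the classical central limit theorem, since $\mathcal Y^n_0(H)$ is a sum of independent mean-zero variables $n^{-1/2}H(x/n)\bar\eta(x)$ under the product measure $\nu_\rho$; in fact $\mathcal Y_0$ is a Gaussian white noise with $\E[\mathcal Y_0(H)^2]=\chi(\rho)\int_{\bb R}H^2$.

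The analytic heart is the estimate
\[
\E^n_\rho\big[\big(\mathcal Y^n_t(H)-\mathcal Y^n_s(H)\big)^2\big]\xrightarrow[n\to\infty]{}0\qquad\text{for every fixed }0\le s\le t,\ H\in\mathcal S(\bb R),
\]
together with the companion bound $\E^n_\rho[(\mathcal Y^n_t(H)-\mathcal Y^n_s(H))^2]\le C_{H,T}\,|t-s|$, valid for all $n$. Both follow from the martingale decomposition \eqref{martdecom_1}, whose contributions I would treat separately. The martingale $M^n_t(H)$: a direct computation of its quadratic variation gives $\E^n_\rho[\langle M^n(H)\rangle_t]=O(t\,n^{a-2})$, which vanishes since $a<2$, and Doob's inequality also controls $\sup_{t\le T}|M^n_t(H)|$. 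The $h$-term: since $h$ is local, $\nu_\rho$ is product, and $\sum_x\Delta_nH(x/n)=0$ (so that only the centered part of $h$ contributes), the bound $\int_\Omega(\sum_x w(x)\tau_x(h-\E^n_\rho[h]))^2\,d\nu_\rho\le C\sum_x w^2(x)$ together with Cauchy--Schwarz in time gives an $O(|t-s|^2 n^{2a-4})$ estimate, negligible because $a<2$. The $g$-term is negligible directly by Assumption~\ref{ass3} (in the applications it is a polynomial of degree at least three and is controlled by the higher-degree version of the Boltzmann--Gibbs principle, obeying the same quantitative bounds).

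The only delicate contribution is the quadratic term $C\int_s^t n^{a-3/2}\sum_x\nabla_nH(x/n)\,\bar\eta_{rn^a}(x)\bar\eta_{rn^a}(x+1)\,dr$. Here I would apply Theorem~\ref{theo:BG} with $v(x)=n^{a-3/2}\nabla_nH(x/n)$, so that $\|v\|_{2,n}^2=O(n^{2a-3})$, and a block size $L=L(n)$ to be optimized; splitting via $(p+q)^2\le 2p^2+2q^2$, the term is bounded by the Boltzmann--Gibbs replacement error, controlled by the right-hand side of \eqref{eq:BGexpo}, plus the ``smoothed'' term $C\int_s^t n^{a-3/2}\sum_x\nabla_nH(x/n)\,(\vec\eta^{L}_{rn^a}(x))^2\,dr$. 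For the latter I would use Cauchy--Schwarz in time and stationarity, together with the crucial identity $\sum_x\nabla_nH(x/n)=0$: it removes the constant $\chi(\rho)/L$ and leaves the centered variable $W(x):=(\vec\eta^{L}(x))^2-\chi(\rho)/L$, for which, $\nu_\rho$ being product with finite fourth moment (Assumption~\ref{ass1}(iii)), $W(x)$ and $W(x')$ are independent whenever $|x-x'|\ge L$ and $\int_\Omega W(x)^2\,d\nu_\rho=O(L^{-2})$, whence $\int_\Omega(\sum_x v(x)W(x))^2\,d\nu_\rho\le C\,nL^{-1}\|v\|_{2,n}^2$. Collecting the pieces yields a bound of order $|t-s|^2 n^{2a-2}L^{-1}+|t-s|\,L\,n^{a-2}+|t-s|^2 n^{2a-2}L^{-2}$, and the choice $L=\lceil n^{a/2}\rceil$ balances the first two terms into $C\,|t-s|(1+|t-s|)\,n^{3a/2-2}$ (the third term being of lower order). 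Since $a<4/3$ the exponent $3a/2-2$ is strictly negative, so this quantity is bounded uniformly in $n$ and tends to $0$ for fixed $t-s$. This is exactly where the hypothesis $a<4/3$ enters, and I expect this optimization --- in particular, recognizing that $\sum_x\nabla_nH(x/n)=0$ kills the $O(1/L)$ expectation of $(\vec\eta^{L})^2$ --- to be the main technical point of the proof.

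The remaining steps are routine. Tightness of $\{\mathcal Y^n_\cdot(H)\}_n$ in $\mathcal D([0,T];\bb R)$ follows from Aldous' criterion applied to \eqref{martdecom_1}: tightness at each fixed time is immediate since $\E^n_\rho[(\mathcal Y^n_t(H)-\mathcal Y^n_0(H))^2]\to0$ and $\mathcal Y^n_0(H)$ converges, while the equicontinuity condition follows from Doob's inequality for $M^n$ and, for the time integrals, from the uniform bound $\E^n_\rho[(\mathcal Y^n_{\tau+\theta}(H)-\mathcal Y^n_\tau(H))^2]\le C_{H,T}\,\theta$ (Markov property and stationarity). Mitoma's criterion then yields tightness in $\mathcal D([0,T];\mathcal S'(\bb R))$. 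Finally, the core estimate gives $\mathcal Y^n_t(H)-\mathcal Y^n_0(H)\to0$ in $\bb L^2(\P^n_\rho)$ for every $t$ and $H$, so any limit point $\mathcal Y$ satisfies $\mathcal Y_t(H)=\mathcal Y_0(H)$ almost surely for all $t$ and $H$; since the trajectories lie in $\mathcal D([0,T];\mathcal S'(\bb R))$, the limit is a.s. the constant path $t\mapsto\mathcal Y_0$. As this law is uniquely determined by the central limit theorem for $\mathcal Y^n_0$, the whole sequence converges, which is the assertion of the theorem.
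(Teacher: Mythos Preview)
Your proposal is correct and follows essentially the same route as the paper: the martingale and $h$-terms vanish trivially because $a<2$, and the quadratic term is controlled by splitting via $(p+q)^2\le 2p^2+2q^2$, applying Theorem~\ref{theo:BG}, bounding the smoothed remainder $\sum_x v(x)\big[(\vec\eta^L(x))^2-\chi(\rho)/L\big]$ by Cauchy--Schwarz in time together with the $L$-block independence structure, and then optimizing at $L\sim n^{a/2}$ to obtain the decisive exponent $3a/2-2<0$. The paper carries out exactly this computation (in the ASEP example, with $a=1+\alpha$ and $L=n^{(\alpha+1)/2}$), while you spell out in addition the tightness argument via Aldous and Mitoma and the identification of the limit, which the paper leaves implicit.
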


\subsubsection{Diffusive systems}
In this section we consider systems which fulfill the  assumptions above  and that evolve diffusively so that $a=2$. 
Recall \eqref{martdecom_1} and note that if we add a weak asymmetry to the system given by $n^{-\gamma}$, for $\gamma\in{(1/2,1]}$ then, the last integral in the martingale decomposition reads as \begin{equation*}
 \int_{0}^{t}\frac{n^{a-1-\gamma}}{\sqrt n}\sum_{x\in\mathbb{Z}}\nabla_nH\Big(\frac{x}{n}\Big)\bar{\eta}^n_{sn^2}(x)\bar{\eta}^n_{sn^a}(x+1)ds. 
\end{equation*}
In this case, as a consequence of the second order Boltzmann-Gibbs principle stated above, one can show a crossover on the fluctuations which depends on the strength of the asymmetry.

\begin{theorem}[Crossover fluctuations]\label{th:crossover}

The sequence of processes $\{\mc Y^{n}_t\;;\; t \in [0,T]\}_{n \in \bb N}$ converges in distribution with respect to the Skorokhod topology of $\mc D([0,T]\;;\;\mc S'(\bb R))$, as $n\to\infty$, to 

\begin{itemize}
\item an Ornstein-Uhlenbeck process as in \eqref{eq:ou}, if $\gamma>1/2$,
\item an energy solution of the stochastic Burgers equation as in \eqref{eq:SBE_new}, if $\gamma=1/2$.
\end {itemize}
\end{theorem}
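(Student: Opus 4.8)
The plan is to establish \textbf{tightness} of the sequence $\{\mc Y^n_t\}_{n\in\bb N}$ in $\mc D([0,T];\mc S'(\bb R))$ and then to \textbf{characterize all limit points}, which by the crossover in $\gamma$ splits into two regimes. I would start from the martingale decomposition \eqref{martdecom_1} (with $a=2$ and the weak asymmetry factor $n^{-\gamma}$ inserted in front of the quadratic term), and control each piece separately: the initial field $\mc Y^n_0(H)$ converges by the CLT for the product measure $\nu_\rho$ to a Gaussian white noise of variance $\chi(\rho)$; the martingale $M^n_t(H)$ has a quadratic variation whose expectation can be computed explicitly from the Dirichlet form structure in Assumption~\ref{ass2}, and one checks it converges to $2\chi(\rho)\|\nabla H\|_{L^2}^2\, t$ (so $M^n_\cdot(H)$ converges to a Brownian motion with that variance by the martingale CLT, after verifying the Lindeberg/negligible-jumps condition since jumps are $O(n^{-1/2})$); the symmetric ``$h$-term'' with prefactor $n^{a-2}=1$ is handled by the \emph{classical} Boltzmann-Gibbs principle of \cite{TB}, which rewrites it as a linear functional of $\mc Y^n_s$ up to an $\mathbb{L}^2(\P^n_\rho)$-vanishing error, producing the Laplacian drift $\int_0^t \mc Y^n_s(\mathfrak{c}\,\Delta H)\,ds$ for the appropriate diffusion coefficient $\mathfrak{c}=\mathfrak{c}(\rho)$.

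The decisive step is the \textbf{nonlinear term}. Here I would invoke Theorem~\ref{theo:BG}, the second-order Boltzmann-Gibbs principle, with $v(x)=\tfrac{n^{2-1-\gamma}}{\sqrt n}\nabla_n H(x/n)$, so that $\|v\|_{2,n}^2 \sim n^{1-2\gamma}\|\nabla H\|_{L^2}^2$. The estimate \eqref{eq:BGexpo} then says that the quadratic field $\bar\eta(x)\bar\eta(x+1)$ can be replaced by $\big(\vec\eta^L(x)\big)^2 - \chi(\rho)/L$ at a cost bounded by $C t\{L/n^{a-1} + tn/L^2\}\|v\|_{2,n}^2 = C t\{L/n + tn/L^2\} n^{1-2\gamma}$; optimizing over $L$ (choosing $L \sim n^{2/3}$ up to logarithmic corrections, or $L$ growing slowly but with $L/n,\ n/L^2 \to 0$) makes this error go to zero provided $\gamma \ge 1/2$ — and exactly here the two cases diverge. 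When $\gamma>1/2$ the whole nonlinear contribution is $O(n^{1-2\gamma})\to 0$, so the limit equation has \emph{no} nonlinear term and $\mc Y_t$ solves the linear SPDE, i.e.\ is the stationary Ornstein-Uhlenbeck process \eqref{eq:ou}. When $\gamma=1/2$ the prefactor is order one, the error still vanishes, and the replacement shows the limit points satisfy the nonlinear martingale characterization of \cite{GubPer}: one must check that for a smoothed field $\mc Y^{n,\varepsilon}_t$ the functional $\int_0^t \partial_x\!\big((\mc Y^{n,\varepsilon}_s)^2\big)(H)\,ds$ converges, together with the corresponding statement for the time-reversed process, so that the limit is a \emph{stationary energy solution} in the sense of \cite{GubJar,GubPer}; uniqueness of such solutions from \cite{GubPer} then upgrades convergence along subsequences to full convergence.

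The \textbf{main obstacle} is this last identification: verifying the energy-solution conditions (in particular the ``$\mc A_{t}$'' / vanishing-of-the-additive-functional estimate and its time-reversal) requires controlling $\mc Y^{n,\varepsilon}_t$ uniformly as first $n\to\infty$ and then $\varepsilon\to 0$, and the interchange of these limits is exactly what the sharp quantitative bound of Theorem~\ref{theo:BG} is designed to make rigorous — one plugs in a test function $v$ built from an approximation of the identity at scale $\varepsilon n$ and tracks the dependence on $\varepsilon$. A secondary technical point is the martingale central limit theorem: one needs that the predictable quadratic variation converges in probability (not merely in expectation), which again follows from a second-moment computation using the product structure of $\nu_\rho$ and Assumption~\ref{ass2}, together with the bound on the maximal jump size to kill the jump part in the limit. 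Once tightness, the martingale CLT, the two Boltzmann-Gibbs principles, and the uniqueness result of \cite{GubPer} are in place, the theorem follows by assembling the limiting martingale problem in each of the two regimes.
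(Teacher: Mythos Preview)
Your strategy---tightness plus identification of limit points via the martingale decomposition, the classical and second-order Boltzmann--Gibbs principles, and the uniqueness of energy solutions from \cite{GubPer}---is correct and is precisely the route the paper takes (largely by pointing to \cite{gj2014} and Subsection~\ref{ssec:uniqueness}).

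One concrete point needs adjustment. The mesoscopic choice $L\sim n^{2/3}$ optimizes only the \emph{replacement} error in \eqref{eq:BGexpo}; it does not by itself justify the claim that ``the whole nonlinear contribution is $O(n^{1-2\gamma})$''. After the replacement there is still the term
\[
\int_0^t n^{1/2-\gamma}\sum_{x\in\bb Z} \nabla_n H\Big(\frac{x}{n}\Big)\Big[(\vec\eta^{\,L}_{sn^2}(x))^2-\frac{\chi(\rho)}{L}\Big]\,ds,
\]
and a Cauchy--Schwarz bound on its second moment gives order $t^2 n^{2-2\gamma}/L$, which with $L=n^{2/3}$ vanishes only for $\gamma>2/3$. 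The paper instead takes $L=\epsilon n$ throughout: then the Boltzmann--Gibbs error is of order $\epsilon\, n^{1-2\gamma}$ and the remaining term is of order $n^{1-2\gamma}/\epsilon$, so for $\gamma>1/2$ both vanish as $n\to\infty$ (for each fixed $\epsilon$), while for $\gamma=1/2$ the remaining term is exactly
\[
\int_0^t \frac{1}{n}\sum_{x\in\bb Z} \nabla_n H\Big(\frac{x}{n}\Big)\Big(\mc Y^n_s\big(\epsilon^{-1}\mathbf 1_{[0,\epsilon]}\big)\Big)^2\,ds,
\]
which is what identifies the limit with $\mc B_t^\epsilon(H)$ and hence with the energy solution after sending $\epsilon\to 0$. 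Your later remark about plugging in an approximation of the identity at scale $\varepsilon n$ is the right instinct; use that scale from the start in place of $n^{2/3}$.
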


Before proceeding any further, we explain in detail what do we mean by energy solution of the stochastic Burgers equation,  and we state the uniqueness result of \cite{GubPer,DieGubPer} regarding these solutions.

\subsubsection{Energy solutions of the stochastic Burgers equation} \label{ssec:uniqueness}
Let us describe the concept of {\em energy solutions} of the stochastic Burgers equation.
Fix $T >0$. Let $\mc S'(\bb R)$ be the Schwartz space of distributions and $\mc C([0,T], \mc S'(\bb R))$ the space of continuous paths in $\mc S'(\bb R)$. We say that a process $\{\mc A_t\; ;\; t \in [0,T]\}$ with trajectories in $\mc C([0,T], \mc S'(\bb R))$  has {\em zero quadratic variation} if the real-valued process $\{\mc A_t(H)\; ;\;  t \in [0,T]\}$ has zero quadratic variation for any test function $H \in \mc S(\bb R)$. Let $\nu, \sigma >0$ and ${\mc W}$ be a space-time standard white noise. Let us denote by  $ \| H\|^2_{\bb L^2(\bb R)}$   the $\bb L^2$-norm of $H$, that is: \[ \|H\|^2_{\bb L^2(\bb R)}:=\int_{\bb R}H^2(x)\, dx.\]

\begin{definition}
We say that a pair of stochastic processes $\{ (\mc Y_t, \mc A_t) \;;\; t \in [0,T]\}$ with trajectories in $\mc C([0,T], \mc S'(\bb R))$ is {\em controlled} by the Ornstein-Uhlenbeck process
\begin{equation}\label{eq:ou}
\partial_t \mc Y_t = \nu \Delta \mc Y_t + \sqrt{2 \nu \sigma^2}\; \nabla {\mc W}_t
\end{equation}
if:
\begin{enumerate}
\item[(i)] for each fixed time $t \in [0,T]$, the $\mc S'(\bb R)$-valued random variable $\mc Y_t$ is a white noise of variance $\sigma^2$,
\item[(ii)] $\mc A_0 \equiv 0$ and the process $\{\mc A_t\; ;\; t \in [0,T]\}$ has zero quadratic variation,
\item[(iii)] for each  $H\in \mc S(\bb R)$, the process
\[M_t(H): = \mc Y_t(H) - \mc Y_0(H) - \int_0^t \mc Y_s(\nu \Delta H) ds- \mc A_t(H)
\]
is a Brownian motion of variance $2 \nu \sigma^2 \|\nabla H\|^2_{\bb L^2(\bb R)}$ with respect to the natural filtration of $(\mc Y_t, \mc A_t)$,
\item[(iv)] the reversed processes $\{(\mc Y_{T-t},\mc A_{T-t}-\mc A_T)\; ; \; t \in [0,T]\}$ also satisfy (iii).
\end{enumerate}\end{definition}

If $\mc A_t \equiv 0$, then $\mc Y_t$ is the unique martingale solution of the Ornstein-Uhlenbeck equation \eqref{eq:ou}. 
The interest of the notion of controlled processes, is that it allows to define some non-trivial functions of the process $\{\mc Y_t\;;\; t \in [0,T]\}$. Let $\{\iota_\epsilon\; ;\;  \epsilon \in (0,1)\}$ be an approximation of the identity and $H\in\mc S(\bb R)$. Then we define the process $\{\mc B_t^\epsilon\; ;\; t \in [0,T]\}$ as
\[
\mc B_t^\epsilon (H): = \int_0^t \int_{\bb R} \big(\mc Y_s \ast \iota_\epsilon (x)\big)^2 \; \nabla H(x) \;dx ds,\]
where $*$ denotes the convolution operator.
The following proposition has been proved in \cite{GubJar,GubPer,DieGubPer}:

\begin{proposition}
Let $\{(\mc Y_t; \mc A_t)\; ;\; t \in [0,T]\}$ be controlled by the Ornstein-Uhlenbeck process given in  \eqref{eq:ou}. Then the limit
\begin{equation}
\label{approxsq}
\mc B_t(f) = \lim_{\epsilon \to 0} \mc B_t^\epsilon(f) 
\end{equation}
exists in $\bb L^2$ and it does not depend on the choice of the approximation of the identity $\{\iota_\epsilon\; ;\; \epsilon \in (0,1)\}$. Moreover, the distribution-valued process $\{\mc B_t\; ;\; t \in [0,T]\}$ defined in this way has zero quadratic variation.
\end{proposition}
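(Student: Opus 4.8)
The plan is to exploit that $\mc B_t^\epsilon(H)$ is the additive functional $\int_0^t\Phi_\epsilon(\mc Y_s)\,ds$, with $\Phi_\epsilon(\mc Y):=\int_{\bb R}\big(\mc Y\ast\iota_\epsilon(x)\big)^2\,\nabla H(x)\,dx$, of the stationary process $\mc Y$. Since $\mc Y_s$ is a white noise of variance $\sigma^2$ for every fixed $s$ (item (i)), $\mathbb E[(\mc Y_s\ast\iota_\epsilon(x))^2]=\sigma^2\|\iota_\epsilon\|_{\bb L^2}^2$ is constant in $x$, so $\mathbb E[\Phi_\epsilon(\mc Y_s)]=\sigma^2\|\iota_\epsilon\|_{\bb L^2}^2\int_{\bb R}\nabla H\,dx=0$ and each $\Phi_\epsilon$ is a centered element of the second Wiener chaos of $\mc Y_s$. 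The three steps are: (1) prove an Itô-trick bound $\mathbb E[\sup_{t\le T}(\int_0^t\Psi(\mc Y_s)ds)^2]\le C\,T\,\|\Psi\|_{-1}^2$ for centered quadratic functionals $\Psi$, where $\|\cdot\|_{-1}$ is the $\mc H_{-1}$-norm attached to the Ornstein--Uhlenbeck generator $\mc L_{\mathrm{ou}}=\nu\Delta$ acting on the white-noise chaos; (2) show $\|\Phi_\epsilon-\Phi_{\epsilon'}\|_{-1}\to0$ as $\epsilon,\epsilon'\to0$, giving the $\bb L^2$-limit and its independence of the mollifier; (3) interpolate with the crude bound $\mathbb E[(\int_s^t\Psi(\mc Y_u)du)^2]\le(t-s)^2\,\mathbb E[\Psi(\mc Y_0)^2]$ to get a super-linear modulus of continuity, hence zero quadratic variation.

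\textbf{Step 1 (Itô trick for controlled processes).} For a linear functional $\mc Y\mapsto\mc Y(K)$ one has $\mc L_{\mathrm{ou}}[\mc Y(K)]=\mc Y(\nu\Delta K)$; writing the forward decomposition (iii) and its time reversal (iv) at $t=T$ and summing, the $\mc Y$- and $\mc A$-terms cancel, leaving $\int_0^T\mc Y_s(\nu\Delta K)\,ds=-\tfrac12\big(M_T(K)+\widehat M_T(K)\big)$ with $\widehat M$ the backward Brownian motion, so $\mathbb E[(\int_0^T\mc Y_s(\nu\Delta K)ds)^2]\le 2\nu\sigma^2\|\nabla K\|_{\bb L^2}^2\,T$. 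I would then upgrade this to quadratic functionals $g=\mc Y(H_1)\mc Y(H_2)$: by Itô's product formula applied to the continuous semimartingales $\mc Y_t(H_i)=\mc Y_0(H_i)+\int_0^t\mc Y_s(\nu\Delta H_i)ds+M_t(H_i)+\mc A_t(H_i)$, and using that $\mc A$ has zero quadratic variation so that $\langle\mc Y(H_1),\mc Y(H_2)\rangle_t=\langle M(H_1),M(H_2)\rangle_t=2\nu\sigma^2\langle\nabla H_1,\nabla H_2\rangle\,t$ (polarization of (iii)), one obtains $g(\mc Y_t)-g(\mc Y_0)=\int_0^t\mc L_{\mathrm{ou}}g(\mc Y_s)ds+\text{(martingale)}+\text{(}\mc A\text{-integrals)}$. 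Reversing time via (iv), adding, and using that the $\mc A$-integrals are odd under time reversal and hence cancel in the average, one gets $\int_0^T\mc L_{\mathrm{ou}}g(\mc Y_s)ds=-\tfrac12(\text{fwd mart}+\text{bwd mart})$ and therefore $\mathbb E[(\int_0^T\Psi(\mc Y_s)ds)^2]\le C\,T\,\|\Psi\|_{-1}^2$, first for $\Psi=\mc L_{\mathrm{ou}}g$ and then for all centered $\Psi$ in the second chaos by approximation (with $g=(-\mc L_{\mathrm{ou}})^{-1}\Psi$, which is well defined on that subspace since $\|g\|_1=\|\Psi\|_{-1}$); the $\sup_{t\le T}$ version follows by splitting $[0,T]$ and Doob's inequality.

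\textbf{Steps 2--3 (vanishing $\mc H_{-1}$-norm and zero quadratic variation).} On the second chaos, $-\mc L_{\mathrm{ou}}$ is the multiplication operator $\nu(k^2+\ell^2)$ in the spectral (Fourier) variables, so with $\widehat{\nabla H}(m)=im\widehat H(m)$,
\[
\|\Phi_\epsilon-\Phi_{\epsilon'}\|_{-1}^2\ \asymp\ \int_{\bb R^2}\frac{\big|\widehat\iota_\epsilon(k)\widehat\iota_\epsilon(\ell)-\widehat\iota_{\epsilon'}(k)\widehat\iota_{\epsilon'}(\ell)\big|^2\,(k+\ell)^2\,|\widehat H(k+\ell)|^2}{\nu(k^2+\ell^2)}\,dk\,d\ell .
\]
After the change of variables $m=k+\ell$ the inner integral $\int_{\bb R}\big(k^2+(m-k)^2\big)^{-1}dk$ is finite and of order $|m|^{-1}$; since $|\widehat\iota_\epsilon|\le1$ and $\widehat\iota_\epsilon\to1$ pointwise, dominated convergence (using $H\in\mc S(\bb R)$) gives $\sup_\epsilon\|\Phi_\epsilon\|_{-1}<\infty$, $\|\Phi_\epsilon-\Phi_{\epsilon'}\|_{-1}^2\to0$, and a rate $\|\Phi_\epsilon-\Phi\|_{-1}^2\lesssim\epsilon^{\alpha}$ for some $\alpha>0$. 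By Step 1 this makes $\{\mc B_t^\epsilon(H)\}_\epsilon$ Cauchy in $\bb L^2$, uniformly in $t\le T$, so $\mc B_t(H):=\lim_\epsilon\mc B_t^\epsilon(H)$ exists; since two mollifier families lead to the same limiting Fourier kernel, the limit is independent of $\{\iota_\epsilon\}$. For the quadratic variation, fix $0\le s\le t\le T$ and $\epsilon>0$: $\mc B_t(H)-\mc B_s(H)=\int_s^t\Phi_\epsilon(\mc Y_u)du+R_\epsilon$ with $\|R_\epsilon\|_{\bb L^2}^2\le C(t-s)\|\Phi-\Phi_\epsilon\|_{-1}^2\lesssim(t-s)\epsilon^{\alpha}$, while $\|\int_s^t\Phi_\epsilon(\mc Y_u)du\|_{\bb L^2}\le(t-s)\,\mathbb E[\Phi_\epsilon(\mc Y_0)^2]^{1/2}\lesssim(t-s)\,\epsilon^{-1/2}$ (a Gaussian covariance-of-squares computation gives $\mathbb E[\Phi_\epsilon(\mc Y_0)^2]\asymp\sigma^4\epsilon^{-1}\|\nabla H\|_{\bb L^2}^2$). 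Choosing $\epsilon=(t-s)^{1/(\alpha+1)}$ balances the two terms and yields $\mathbb E[(\mc B_t(H)-\mc B_s(H))^2]\le C(t-s)^{1+\beta}$ with $\beta=\alpha/(\alpha+1)>0$. Hence for any partition of $[0,T]$ with mesh $\delta$, $\mathbb E[\sum_i(\mc B_{t_{i+1}}(H)-\mc B_{t_i}(H))^2]\le C\,\delta^{\beta}T\to0$ as $\delta\to0$, so $\{\mc B_t(H)\}_{t\in[0,T]}$ has zero quadratic variation for every $H\in\mc S(\bb R)$, which is the assertion.

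\textbf{Main obstacle.} The delicate part is Step 1, namely promoting the forward--backward (Lyons--Zheng) martingale identity from the linear functionals of (iii)--(iv) to the quadratic functional $\Phi_\epsilon$. One must justify the product Itô formula for the semimartingales $\mc Y_t(H_i)$ — in particular that the stochastic integrals against the zero-quadratic-variation process $\mc A$ genuinely carry no quadratic variation and are odd under time reversal, so that they cancel in the forward--backward average — and then approximate a general second-chaos functional $g$ by cylinder functions while keeping all bounds uniform. This is precisely where the hypothesis that $(\mc Y_t,\mc A_t)$ is \emph{controlled}, and not merely that $\mc Y_t$ solves the Ornstein--Uhlenbeck equation, enters in an essential way.
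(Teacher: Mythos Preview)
The paper does not give its own proof of this proposition: it explicitly attributes the result to \cite{GubJar,GubPer,DieGubPer} and states it without argument. So there is no in-paper proof to compare against in the usual sense.

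That said, your sketch follows precisely the strategy of those references (the forward--backward ``It\^o trick'' of \cite{GubJar} combined with an explicit $\mc H_{-1}$ computation in Fourier variables, and then interpolation against the crude $\bb L^2$ bound to get the $|t-s|^{1+\beta}$ estimate). Your identification of the main obstacle is accurate: extending the Lyons--Zheng cancellation from linear to quadratic functionals of a controlled process is exactly the point where \cite{GubJar} does the real work. One remark: your claim that the $\mc A$-integrals ``are odd under time reversal and hence cancel in the average'' is correct at the linear level (as your first computation shows, $\mc A_T$ and $-\mc A_T$ appear), but at the quadratic level the objects $\int_0^t \mc Y_s(H_1)\,d\mc A_s(H_2)$ are not a priori well-defined classical stochastic integrals, since $\mc A$ is merely of zero quadratic variation and not of bounded variation. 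In \cite{GubJar} this is handled by first mollifying the controlled process, performing the forward--backward decomposition for smooth cylinder functionals where everything is classical, and only then passing to the limit using the uniform $\mc H_{-1}$ bounds you derive in Step~2. Your parenthetical ``by approximation'' is the right idea, but it is the substance of the argument rather than a routine step.
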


This proposition gives a possible way to define the {\em square} of the distribution-valued process $\mc Y_s$. This definition can be used to pose the Cauchy problem for the stochastic Burgers equation.

\begin{definition} Let $\lambda \in \bb R$. 
We say that a stochastic process $\{\mc Y_t\; ;\; t \in [0,T]\}$ is a {\em stationary controlled solution} of the stochastic Burgers equation
\begin{equation}\label{eq:SBE_new}
\partial_t \mc Y_t = \nu \Delta \mc Y_t + \lambda \nabla \mc Y_t^2 + \sqrt{2 \nu \sigma^2}\; \nabla \mc W_t
\end{equation}

if:
\begin{enumerate}
\item[(i)] there exists a process $\{\mc A_t\; ;\; t \in [0,T]\}$ of zero quadratic variation such that $\{(\mc Y_t, \mc A_t)\; ;\;  t \in [0,T]\}$ is controlled by the Ornstein-Uhlenbeck equation \eqref{eq:ou},
\item[(ii)]
$\mc A_t = -\lambda \mc B_t$ for any $t \in [0,T]$.
\end{enumerate}
\end{definition}

The importance of this definition comes from the fact that it gives uniqueness in law for the process $\mc Y_t$ (we refer to \cite{DieGubPer, GubPer} for a proof):

\begin{proposition}
Any two stationary controlled solutions $\{\mc Y_t\; ; \;t \geq 0\}$ and $\{\mc Y_t'\; ;\; t \in [0,T]\}$ of the stochastic Burgers equation have the same law.
\end{proposition}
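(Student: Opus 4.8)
The plan is to prove uniqueness in law of stationary controlled solutions of the stochastic Burgers equation \eqref{eq:SBE_new} by combining the notion of controlled process with a Gaussian computation (Itô trick / Kipnis--Varadhan type bound) and the uniqueness result for the associated Ornstein--Uhlenbeck process. The first step is to observe that, by definition, any stationary controlled solution $\{\mc Y_t\}$ comes equipped with a process $\{\mc A_t\}$ of zero quadratic variation with $\mc A_t = -\lambda \mc B_t$, where $\mc B_t$ is the intrinsically-defined square built in Proposition \eqref{approxsq}; in particular $\mc A_t$ is a \emph{deterministic functional} of the path $\{\mc Y_s\;;\; s\le t\}$ (and of the reversed path, by condition (iv) in the definition of controlled). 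Hence a stationary controlled solution is entirely described by the law of the single process $\{\mc Y_t\}$, and it suffices to show that this law is determined.

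Next I would set up the comparison argument on a fixed time interval $[0,T]$. Fix a finite collection of test functions $H_1,\dots,H_k \in \mc S(\bb R)$ and times $0\le t_1<\dots<t_k\le T$; the goal is to show the joint law of $(\mc Y_{t_1}(H_1),\dots,\mc Y_{t_k}(H_k))$ is the same for any two stationary controlled solutions. The key is to pass through the regularized equation: for fixed $\epsilon>0$, write the identity (valid by (iii) and the definition of $\mc B_t^\epsilon$)
\[
\mc Y_t(H) = \mc Y_0(H) + \int_0^t \mc Y_s(\nu \Delta H)\,ds -\lambda \mc B_t^\epsilon(H) + M_t(H) + \big(\mc A_t(H) + \lambda\mc B_t^\epsilon(H)\big),
\]
where $M_t(H)$ is the Brownian motion from (iii). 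The error term $\mc A_t(H)+\lambda\mc B_t^\epsilon(H) = -\lambda(\mc B_t(H) - \mc B_t^\epsilon(H))$ goes to $0$ in $\bb L^2$ as $\epsilon\to 0$ uniformly in $t$ by Proposition \eqref{approxsq}. Since for fixed $\epsilon$ the drift $\mc B_t^\epsilon(H)$ is a bounded continuous functional of the white-noise-valued process $\mc Y_s$ at stationarity, one can use a Girsanov/change-of-measure argument on path space, or equivalently a martingale-problem argument, to express expectations of functionals of $\{\mc Y_t\}$ under the controlled solution in terms of expectations under the pure Ornstein--Uhlenbeck process — which is unique by the last paragraph before the proposition. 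The stationarity hypothesis (i) of the definition (each $\mc Y_t$ is white noise of variance $\sigma^2$) together with condition (iv) (time reversal also controlled) is what makes the Radon--Nikodym density integrable: this is precisely the input needed to control $\mc B_t^\epsilon$ via an Itô-trick bound of the form $\bb E[\sup_{t\le T}|\mc B_t^\epsilon(H)|^2] \le C T \|\nabla H\|^2$ independent of $\epsilon$.

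Finally I would close the argument by letting $\epsilon\to 0$: the uniqueness of the law at the regularized level, together with the $\bb L^2$-convergence $\mc B_t^\epsilon(H)\to\mc B_t(H)$ and the fact that $\mc B_t$ has zero quadratic variation (so it does not contribute to the martingale part), forces the law of $\{\mc Y_t\}$ to be the unique solution of the limiting martingale problem. Since this limiting problem does not reference the particular solution we started from, any two stationary controlled solutions have the same finite-dimensional distributions, and since both have continuous trajectories in $\mc C([0,T],\mc S'(\bb R))$, they have the same law.

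The main obstacle is the non-Lipschitz, distribution-valued nonlinearity $\mc Y_t^2$: one cannot apply standard SDE uniqueness, and the term $\mc B_t$ only exists as an $\bb L^2$-limit with no pathwise regularity beyond zero quadratic variation. The crux is therefore the uniform-in-$\epsilon$ control of the regularized drift $\mc B_t^\epsilon$ and the verification that the change of measure relating the controlled solution to the Ornstein--Uhlenbeck process has a genuinely integrable density — this is exactly where the stationarity and the time-reversal condition (iv) are used, and it is the technical heart of \cite{GubPer,DieGubPer}. I would cite those works for the detailed estimates rather than reproduce them.
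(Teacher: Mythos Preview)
The paper does not prove this proposition at all: it is stated without proof as a citation of \cite{GubPer,DieGubPer}, and the text immediately moves on. So there is no ``paper's own proof'' to compare against beyond the bare reference.

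That said, your sketch is not a faithful outline of how \cite{GubPer} actually establishes uniqueness, and the strategy you propose has a genuine gap. The step ``use a Girsanov/change-of-measure argument to express expectations under the controlled solution in terms of the Ornstein--Uhlenbeck process'' does not go through: the regularized drift $\mc B_t^\epsilon(H)$ is \emph{quadratic} in the Gaussian field $\mc Y$, not bounded, and a Girsanov density for such a drift requires exponential integrability of $\int_0^T \|\mc Y_s \ast \iota_\epsilon\|^4\,ds$. The It\^o-trick / Kipnis--Varadhan estimate you invoke only yields second-moment bounds on $\mc B_t^\epsilon$, which is far from a Novikov-type condition; stationarity and time reversal (condition (iv)) do give the It\^o trick, but they do not upgrade $\bb L^2$ control to exponential moments. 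So the change-of-measure argument, as written, cannot be closed.

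The proof in \cite{GubPer} follows a completely different route: it implements a rigorous Cole--Hopf transformation at the level of controlled processes. One shows that if $\mc Y$ is a stationary controlled solution and $h$ is a suitable antiderivative of $\mc Y$, then $Z_t := \exp(\lambda h_t/\nu)$ is a solution (in an appropriate sense) of the \emph{linear} multiplicative stochastic heat equation, for which uniqueness is classical. Uniqueness for $\mc Y$ then follows by inverting the transformation. The controlled structure and the energy estimate are used to make sense of the It\^o correction terms appearing when one expands $e^{\lambda h/\nu}$, not to justify a Girsanov density. If you want to give more than a citation, that is the mechanism you should sketch.
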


In the context of interacting particle systems, another notion of solution is more suitable. We say that a process $\{ \mc Y_t\; ;\; t \in [0,T]\}$ with trajectories in $\mc C([0,T], \mc S'(\bb R))$ is {\em stationary} if the $\mc S'(\bb R)$-valued random variable $\mc Y_t$ is a white noise of variance $\sigma^2$ for any $t \in [0,T]$. 

Recall that  $\{\iota_\epsilon\; ;\; \epsilon \in (0,1)\}$ is an approximation of the identity. For each  $H \in \mc S(\bb R)$ and each $\epsilon \in (0,1)$ consider the process $\{\mc B_t^\epsilon(H)\; ;\; t \in [0,T]\}$ as in \eqref{approxsq}. For $s \leq t \in [0,T]$, let  us define $\mc B_{s,t}^\epsilon(H) = \mc B_t^\epsilon(H) - \mc B_s^\epsilon(H)$. 

We say that $\{\mc Y_t\; ;\; t \in [0,T]\}$ satisfies an {\em energy estimate} if there is a finite constant $\kappa >0$ such that
\begin{equation}
\label{energyestimate}
\bb E\Big[ \big( \mc B_{s,t}^\epsilon(f) - \mc B_{s,t}^\delta(f)\big)^2\Big] \leq \kappa \epsilon(t-s) \|\nabla f\|_{\bb L^2(\bb R)}^2
\end{equation}
for any $s \leq t \in [0,T]$, for any $0 <\delta \leq \epsilon <1$ and any $f \in \mc S(\bb R)$. The following proposition has been proved in \cite{gj2014}, but it is also a consequence of the  second order Boltzmann-Gibbs Principle stated above.

\begin{proposition}
\label{energy}
Let $\{\mc Y_t\; ;\; t \in [0,T]\}$ be a process with trajectories in $\mc C([0,T], \mc S'(\bb R))$. Assume that $\{\mc Y_t\; ;\; t \in [0,T]\}$ is stationary and it satisfies an energy estimate. Then the process $\{ \mc B_t\; ;\; t \in [0,T]\}$ given by $\mc B_t(H) = \lim_{\epsilon \to 0} \mc B_t^\epsilon(H)$
is well defined and it satisfies the estimate
\[
\bb E\Big[ \big(\mc B_t(H) - \mc B_s(H)\big)^2 \Big] \leq \tilde \kappa |t-s|^{3/2} \; \|\nabla H\|_{\bb L^2(\bb R)}^2
\]
for some finite constant $\tilde \kappa >0$, for any $s, t \in [0,T]$ and $H\in \mc S(\bb R)$.
\end{proposition}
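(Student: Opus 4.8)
The plan is to establish the existence of $\mc B_t(H)$ in $\bb L^2$ and the $|t-s|^{3/2}$-bound \emph{at the same time}, by comparing, for each scale $\epsilon\in(0,1)$, the increment $\mc B_{s,t}^\epsilon(H):=\mc B_t^\epsilon(H)-\mc B_s^\epsilon(H)$ with its limit as $\epsilon\to0$. Two ingredients go into this. First, the energy estimate with $s=0$ reads $\bb E[(\mc B_t^\epsilon(H)-\mc B_t^\delta(H))^2]\le\kappa\,\epsilon\, t\,\|\nabla H\|_{\bb L^2(\bb R)}^2$ for $0<\delta\le\epsilon<1$, so $\{\mc B_t^\epsilon(H)\}_\epsilon$ is Cauchy in $\bb L^2$; call its limit $\mc B_t(H)$ and set $\mc B_{s,t}(H):=\mc B_t(H)-\mc B_s(H)$. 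Letting $\delta\downarrow0$ (Fatou) in the energy estimate gives, for every $\epsilon\in(0,1)$,
\[
\bb E\big[(\mc B_{s,t}(H)-\mc B_{s,t}^\epsilon(H))^2\big]\le\kappa\,\epsilon\,(t-s)\,\|\nabla H\|_{\bb L^2(\bb R)}^2.
\]
Second, I would prove a direct small-scale bound $\bb E[(\mc B_{s,t}^\epsilon(H))^2]\le C\sigma^4\,(t-s)^2\,\epsilon^{-1}\,\|\nabla H\|_{\bb L^2(\bb R)}^2$. Granting both, the elementary inequality $(a+b)^2\le2a^2+2b^2$ yields, for all $\epsilon\in(0,1)$,
\[
\bb E\big[(\mc B_{s,t}(H))^2\big]\le\Big(\frac{C_1(t-s)^2}{\epsilon}+C_2\,\epsilon\,(t-s)\Big)\|\nabla H\|_{\bb L^2(\bb R)}^2,
\]
and optimizing over $\epsilon$ — the minimiser is $\epsilon_\ast\asymp(t-s)^{1/2}$, truncated below $1$ — produces the exponent $3/2$.

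For the small-scale bound, put $G_\epsilon(r):=\int_{\bb R}(\mc Y_r\ast\iota_\epsilon(x))^2\,\nabla H(x)\,dx$, so that $\mc B_{s,t}^\epsilon(H)=\int_s^tG_\epsilon(r)\,dr$. Because $\mc Y_r$ is a white noise of variance $\sigma^2$, the field $x\mapsto\mc Y_r\ast\iota_\epsilon(x)$ is centered Gaussian with $\bb E[(\mc Y_r\ast\iota_\epsilon(x))^2]=\sigma^2\|\iota_\epsilon\|_{\bb L^2(\bb R)}^2$ independent of $x$; since $\int_{\bb R}\nabla H=0$ for $H\in\mc S(\bb R)$, this forces $\bb E[G_\epsilon(r)]=0$. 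Wick's formula at a fixed time gives $\cov\big((\mc Y_r\ast\iota_\epsilon(x))^2,(\mc Y_r\ast\iota_\epsilon(y))^2\big)=2\sigma^4\,\phi_\epsilon(x-y)^2$ with $\phi_\epsilon(u):=\int_{\bb R}\iota_\epsilon(z)\,\iota_\epsilon(z-u)\,dz\ge0$, hence by Young's inequality
\[
\bb E\big[G_\epsilon(r)^2\big]=2\sigma^4\int_{\bb R}\!\int_{\bb R}\phi_\epsilon(x-y)^2\,\nabla H(x)\,\nabla H(y)\,dx\,dy\le2\sigma^4\,\|\phi_\epsilon\|_{\bb L^2(\bb R)}^2\,\|\nabla H\|_{\bb L^2(\bb R)}^2,
\]
and with the usual mollifier scaling $\|\phi_\epsilon\|_{\bb L^2(\bb R)}^2=\epsilon^{-1}\|\phi_1\|_{\bb L^2(\bb R)}^2$. (This computation, with $s=0$, already shows that each $\mc B_t^\epsilon(H)$ is a genuine element of $\bb L^2$.) Finally, by the Cauchy--Schwarz inequality in the two time variables together with stationarity — so that $\bb E[G_\epsilon(r)^2]$ does not depend on $r$ —
\[
\bb E\big[(\mc B_{s,t}^\epsilon(H))^2\big]=\int_s^t\!\int_s^t\bb E[G_\epsilon(r)\,G_\epsilon(r')]\,dr\,dr'\le(t-s)^2\,\bb E\big[G_\epsilon(0)^2\big],
\]
which is the asserted bound.

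The only genuinely delicate point — and what I expect to be the main obstacle if one proceeds carelessly — is that we are handed the \emph{fixed-time} white-noise law of $\mc Y_r$ but no joint law in time, so the time correlation $\bb E[G_\epsilon(r)G_\epsilon(r')]$ has to be absorbed by the lossy Cauchy--Schwarz step above; the compensation is exactly that the double time integral contributes a full factor $(t-s)^2$ while the small-scale blow-up is only $\epsilon^{-1}$, so balancing $\epsilon^{-1}(t-s)^2$ against $\epsilon(t-s)$ at $\epsilon\asymp(t-s)^{1/2}$ gives precisely $(t-s)^{3/2}$ rather than the weaker $(t-s)$ one might first fear. Some bookkeeping is needed for the constraint $\epsilon<1$: when the optimal $\epsilon_\ast$ exceeds $1$ (equivalently $t-s$ is bounded below by a constant) one simply fixes $\epsilon$ slightly below $1$ and uses $t-s\le T$ to absorb the resulting $(t-s)$ and $(t-s)^2$ factors into a $T$-dependent constant. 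I would carry out everything for one fixed approximation of the identity, and conclude from the energy estimate that $\mc B_t(H)$ is independent of that choice, in agreement with the propositions recalled above.
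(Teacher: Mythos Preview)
The paper does not actually supply a proof of this proposition: it simply states that the result ``has been proved in \cite{gj2014}'' and adds that it also follows from the Boltzmann--Gibbs principle, without giving details. Your argument is correct and is precisely the standard one underlying that reference: combine the energy estimate, which after passing $\delta\downarrow 0$ controls $\bb E[(\mc B_{s,t}(H)-\mc B_{s,t}^\epsilon(H))^2]$ by $\kappa\,\epsilon\,(t-s)\|\nabla H\|_{\bb L^2}^2$, with a crude fixed-time Gaussian bound $\bb E[(\mc B_{s,t}^\epsilon(H))^2]\lesssim (t-s)^2\epsilon^{-1}\|\nabla H\|_{\bb L^2}^2$ obtained from stationarity, Wick's formula, and Cauchy--Schwarz in the time variables; then optimise at $\epsilon\asymp (t-s)^{1/2}$. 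Two minor remarks: the non-negativity of $\phi_\epsilon$ is not actually needed for the Young inequality step (you only use $\phi_\epsilon^2\ge 0$), and the passage to the limit $\delta\downarrow 0$ can be justified directly by the $\bb L^2$ convergence you have already established rather than by Fatou. Your handling of the constraint $\epsilon<1$ and the independence of the limit from the choice of mollifier are also fine.
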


This proposition gives an alternative way to make sense of the nonlinear term of the stochastic Burgers equation. 

\begin{definition}
We say that a process $\{\mc Y_t\; ;\; t \in [0,T]\}$ with trajectories in $\mc C([0,T], \mc S'(\bb R))$ is a {\em stationary energy solution} of the stochastic Burgers equation if:
\begin{enumerate}
\item[(i)] for each $t \in [0,T]$ the $\mc S'(\bb R)$-valued random variable $\mc Y_t$ is a white noise of variance $\sigma^2$,
\item[(ii)] the process $\{\mc Y_t\; ;\; t \in [0,T]\}$ satisfies an energy estimate,
\item[(iii)] for any $H \in \mc S(\bb R)$ the process
\[
\mc Y_t(H) - \mc Y_0(H) - \int_0^t \mc Y_s(\nu \Delta H) ds + \lambda \mc B_t(H) 
\]
is a Brownian motion of variance $2 \nu \sigma^2 \|\nabla H\|_{L^2(\bb R)}^2$,
\item[(iv)] the reversed process $\{\mc Y_{T-t}\; ;\; t \in [0,T]\}$ also satisfies (iii).
\end{enumerate}
\end{definition}

In \cite{GubJar,GubPer,DieGubPer}, stationary controlled solutions of the stochastic Burgers equation are actually called energy solutions. This is due to the following result:

\begin{proposition}
Let $\{\mc Y_t\; ;\;  t \in [0,T]\}$ be a stationary process satisfying an energy estimate. Then the process $\{\mc B_t\; ;\;  t \in [0,T]\}$ constructed in Proposition \ref{energy} has zero quadratic variation. In particular, the notions of stationary energy solutions and stationary controlled solutions of the stochastic Burgers equation are equivalent.
\end{proposition}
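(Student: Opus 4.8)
The plan is to reduce the whole statement to the quantitative time‑regularity of $\mc B_t$ that is already recorded in Proposition~\ref{energy}. Since $\{\mc Y_t\; ;\; t\in[0,T]\}$ is stationary and satisfies an energy estimate, that proposition applies and gives, for every $H\in\mc S(\bb R)$ and all $s,t\in[0,T]$,
\[
\bb E\big[(\mc B_t(H)-\mc B_s(H))^2\big]\;\le\;\tilde\kappa\,|t-s|^{3/2}\,\|\nabla H\|_{\bb L^2(\bb R)}^2 .
\]
The key observation is that the exponent $3/2>1$ forces the quadratic variation of the scalar process $t\mapsto\mc B_t(H)$ to vanish; everything else is bookkeeping.

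First I would fix $H$ and $t\in(0,T]$ and let $\mc P=\{0=t_0<t_1<\dots<t_k=t\}$ be an arbitrary partition of $[0,t]$ with mesh $|\mc P|=\max_i(t_{i+1}-t_i)$. Summing the bound above over the partition, and using $(t_{i+1}-t_i)^{3/2}\le|\mc P|^{1/2}(t_{i+1}-t_i)$ together with $\sum_i(t_{i+1}-t_i)=t$,
\[
\bb E\Big[\sum_{i=0}^{k-1}\big(\mc B_{t_{i+1}}(H)-\mc B_{t_i}(H)\big)^2\Big]\;\le\;\tilde\kappa\,\|\nabla H\|_{\bb L^2(\bb R)}^2\sum_{i=0}^{k-1}(t_{i+1}-t_i)^{3/2}\;\le\;\tilde\kappa\,\|\nabla H\|_{\bb L^2(\bb R)}^2\,t\,|\mc P|^{1/2}.
\]
Hence, along any sequence of partitions of $[0,t]$ whose mesh tends to $0$, the quadratic variation sums converge to $0$ in $\bb L^1$, and therefore in probability. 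This is precisely the statement that $\{\mc B_t(H)\; ;\; t\in[0,T]\}$ has zero quadratic variation, and since $H\in\mc S(\bb R)$ is arbitrary, $\{\mc B_t\; ;\; t\in[0,T]\}$ has zero quadratic variation as an $\mc S'(\bb R)$‑valued process. (The same $3/2$‑bound, via Kolmogorov's criterion, also provides a continuous modification of $t\mapsto\mc B_t(H)$, so there is no measurability issue in evaluating the partition sums.)

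For the equivalence of the two notions I would just match the definitions, the zero quadratic variation of $\mc B_t$ being the only new ingredient. If $\{\mc Y_t\}$ is a stationary energy solution, then it satisfies an energy estimate, so by what was just proved the associated process $\mc B_t$ has zero quadratic variation; setting $\mc A_t:=-\lambda\mc B_t$ we get $\mc A_0=0$ (as $\mc B_0^\epsilon\equiv0$) and $\mc A_t$ of zero quadratic variation, while conditions (iii) and (iv) in the definition of ``controlled by the Ornstein--Uhlenbeck process \eqref{eq:ou}'' become, after substituting $\mc A_t=-\lambda\mc B_t$, exactly conditions (iii) and (iv) in the definition of a stationary energy solution; together with stationarity (condition (i)) this shows $(\mc Y_t,\mc A_t)$ is controlled by \eqref{eq:ou} with $\mc A_t=-\lambda\mc B_t$, i.e.\ $\{\mc Y_t\}$ is a stationary controlled solution. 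Conversely, a stationary controlled solution is stationary by definition, and reading its defining martingale $M_t(H)$ with $\mc A_t=-\lambda\mc B_t$ inserted yields conditions (iii)--(iv) of an energy solution; the remaining energy estimate is part of the data underlying the construction of $\mc B_t$ in the controlled setting (Proposition~\ref{energy} and the preceding construction of $\mc B_t$), so $\{\mc Y_t\}$ is a stationary energy solution.

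The genuinely hard work is not in the present statement: it is the derivation of the $3/2$‑exponent time‑increment bound in Proposition~\ref{energy}, which itself rests on the energy estimate and hence on the second‑order Boltzmann--Gibbs principle. Given that input, the only mild subtlety here is to make sure the notion of zero quadratic variation one verifies is the one used downstream (convergence of the partition sums in probability, with a bound depending on the partition only through its mesh), which is exactly what the displayed estimate delivers.
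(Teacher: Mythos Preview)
Your argument is correct and follows the same route as the paper: both use the $|t-s|^{3/2}$ increment bound from Proposition~\ref{energy} to show that the quadratic variation sums of $\mc B_\cdot(H)$ vanish. The paper's proof is the one-line special case of your estimate for uniform partitions $t_i=ti/n$, giving $\bb E\big[\sum_i(\mc B_{t(i+1)/n}(H)-\mc B_{ti/n}(H))^2\big]\le\tilde\kappa\|\nabla H\|_{\bb L^2(\bb R)}^2\,n^{-1/2}$; you are simply more thorough in treating arbitrary partitions and in spelling out the equivalence of the two solution notions, which the paper leaves implicit.
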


\begin{proof}
First we observe that
\[
\bb E \Big[\sum_{i=1}^n \big(\mc B_{t(i+1)/n}(H) -\mc B_{ti/n}(H)\big)^2\Big] \leq \tilde \kappa \|\nabla H\|_{\bb L^2(\bb R)}^2 \; n^{-1/2}.
\]
This shows that a stationary energy solution is also a stationary controlled solution. But in fact, stationary controlled solutions satisfy stronger energy bounds than \eqref{energyestimate}, see Section 2.3 of \cite{GubPerEBP}.
\end{proof}

\section{Applications to interacting particle systems}
\label{sec:application}

\subsection{Exclusion processes}
\subsubsection{The WASEP and the stochastic Burgers equation} \label{sec:wasep}
For this model we have $\Omega=\{0,1\}^{\Z}$ and the infinitesimal generator is given by
\begin{equation*}
\begin{split}
\mathcal{L}_nf(\eta)=\sum_{x\in\mathbb{Z}}&\Big(\frac{1}{2}+\frac{b}{2n^\gamma}\Big)\eta(x)(1-\eta(x+1))\nabla_{x,x+1}f(\eta)\\+
&\Big(\frac{1}{2}-\frac{b}{2n^\gamma}\Big)\eta(x)(1-\eta(x-1))\nabla_{x,x-1}f(\eta),
\end{split}
\end{equation*}
where $b,\gamma >0$, see for example \cite{gj2014}.
The dynamics conserves the total number of particles and  the invariant measures are given by a family of Bernoulli product measures parametrized by the density $\rho$ which are translation invariant, since for any $x\in\mathbb{Z}$, $\nu_\rho(\eta:\eta(x)=1)=\rho$. Notice that every moment of this measure is finite, so that Assumption \ref{ass1} holds. One can easily check that the Dirichlet form does write on the form \eqref{eq:diri} with
\begin{align*}
\zeta^n_{0,1}(\eta)&=\Big(\frac{1}{2}+\frac{b}{2n^\gamma}\Big)\eta(0)(1-\eta(1))+\Big(\frac{1}{2}-\frac{b}{2n^\gamma}\Big)\eta(1)(1-\eta(0))\\
& = (\eta(0)-\eta(1))^2 + \frac{b}{2n^\gamma}\big[\eta(0)(1-\eta(1))-\eta(1)(1-\eta(0))\big]. \end{align*}
 Moreover one gets \[\bar{j}^n_{x,x+1}(\eta)=\frac{1}{2}\big(\bar{\eta}{(x)}-\bar{\eta}{(x+1)}\big)+\frac{b}{2n^\gamma}\big((\eta(x)-\eta(x+1))^2-2\chi{(\rho)}\big).\]
Therefore, in this case $h(\eta)=\eta(0)/2$, $C(n)=-b/n^\gamma$ and \[g(\eta)=-\frac{b}{2n^\gamma}(1-2\rho)(\bar\eta(x)+\bar\eta(x+1)).\] To simplify the exposition we take $\rho=1/2$ (hence $g(\eta)=0$), nevertheless we notice that by a Galilean transformation, which removes the transport velocity to the system, one could redefine the density fluctuation field and take other values of $\rho$, for more details we refer the reader to, for example, \cite{G}. In that case, the integral part of the martingale \eqref{eq:integral} can be written as
\begin{equation*}
\int_{0}^{t}\frac{n^{a-2}}{2\sqrt n}\sum_{x\in\mathbb{Z}}\Delta_nH\Big(\frac{x}{n}\Big)\bar{\eta}_{sn^a}(x)ds+\int_{0}^{t}\frac{bn^{a-1-\gamma}}{2\sqrt n}\sum_{x\in\mathbb{Z}}\nabla_nH\Big(\frac{x}{n}\Big)\bar{\eta}_{sn^a}(x)\bar{\eta}_{sn^a}{(x+1)}ds.
\end{equation*}
Here the interesting time scale is the diffusive one, namely $a=2$,  so that the previous expression can be written as
\begin{equation*}
\int_{0}^{t}\frac12\mathcal{Y}_s^n(\Delta_nH)ds+\int_{0}^{t}{\frac{bn^{\frac12-\gamma}}{2}}\sum_{x\in\mathbb{Z}}\nabla_nH\Big(\frac{x}{n}\Big)\bar{\eta}_{sn^a}(x)\bar{\eta}_{sn^a}{(x+1)}ds.
\end{equation*}
Now we sketch the proof of Theorem \ref{th:crossover} in this case. 
By Theorem \ref{theo:BG}, with $L=\epsilon n$, together with Young's inequality and a Cauchy-Schwarz inequality, the variance of the term on the right hand side of last expression is bounded above by $\epsilon n^{1-2\gamma}$, which vanishes, as $n\to\infty$, if $\gamma>1/2$. From this it can be shown (see \cite{gj2014})  that for $\gamma>1/2$, the limiting process $\mathcal{Y}_t$ is an Ornstein-Uhlenbeck process. Nevertheless, for $\gamma=1/2$, by Theorem \ref{theo:BG} with $L=\epsilon n$, the term on the right hand side of last expression can be written as
\begin{equation*}
\int_{0}^{t}\frac{1}{n}\sum_{x\in\mathbb{Z}}\nabla_nH\Big(\frac{x}{n}\Big)\Big(\mathcal{Y}_s^n\big(\epsilon^{-1} \textbf{1}_{[0,\epsilon]}\big)\Big)^2 ds,
\end{equation*} plus a term that vanishes in $\mathbb{L}^2(\mathbb{P}^n_\rho)$, as $n\to\infty$ and  $\epsilon\to 0$. From this one can show (see \cite{gj2014} and Subsection \ref{ssec:uniqueness}) that for $\gamma=1/2$, the limiting process $\mathcal{Y}_t$ is the unique energy solution of the stochastic Burgers equation, as stated in Theorem \ref{th:crossover}.

\subsubsection{The ASEP and the time invariance of the density fluctuation field}
\label{sec:asep}
For this model we have $\Omega=\{0,1\}^\Z$ and the infinitesimal generator is given by
\begin{equation*}
\begin{split}
\mathcal{L}_nf(\eta)=\sum_{x\in\mathbb{Z}}&p\eta(x)(1-\eta(x+1))\nabla_{x,x+1}f(\eta)\\+
&(1-p)\eta(x)(1-\eta(x-1))\nabla_{x,x-1}f(\eta),
\end{split}
\end{equation*}
with $p\in (0,1)$, see for example \cite{G}. As above, the dynamics conserves the total number of particles and  the invariant measures are the Bernoulli product measures parametrized by the density $\rho$. Assumption \ref{ass2} on the Dirichlet form holds with
\begin{align*}
\zeta^n_{0,1}(\eta)&=p\eta(0)(1-\eta(1))+(1-p)\eta(1)(1-\eta(0))\\
& = \eta(0)(1-\eta(1)) + (p-1)\big[\eta(0)(1-\eta(1))-\eta(1)(1-\eta(0))\big].
\end{align*}
We also have \[\bar{j}^n_{x,x+1}(\eta)=p{\eta}{(x)}(1-\eta{(x+1)})-(1-p)\eta(x+1)(1-\eta(x))-(2p-1)\chi(\rho).\]  A simple computation shows that
 $\bar{j}^n_{x,x+1}$ can be written as
 \begin{align}
\bar{j}^n_{x,x+1}(\eta)=&-(2p-1)\bar{\eta}(x)\bar{\eta}(x+1)-((1-p)(1-\rho)+p\rho)\big(\bar{\eta}(x+1)-\bar{\eta}(x)\big)\notag \\&+(2p-1)(1-2\rho)\bar{\eta}(x).\label{eq:decompcurrent}
\end{align}
As for the WASEP, we simplify the exposition by assuming $\rho=1/2$, so that the previous expression reads as
 \begin{equation*}
-(2p-1)\bar{\eta}(x)\bar{\eta}(x+1)-\frac{1}{2}\big(\bar{\eta}(x+1)-\bar{\eta}(x)\big),
\end{equation*}
and therefore $h(\eta)=\eta(0)$, $C(n)=-(2p-1)$ and $g(\eta)=0$.
Performing a  summation by parts and by the Cauchy-Schwarz  inequality, the integral part of the martingale \eqref{eq:integral} can be written as
\begin{equation}\label{eq:B}
-(2p-1)\int_{0}^{t}\frac{n^{a-1}}{\sqrt n}\sum_{x\in\mathbb{Z}}\nabla_nH\Big(\frac{x}{n}\Big)\bar{\eta}_{sn^a}(x)\bar{\eta}_{sn^a}{(x+1)}ds,
\end{equation}
plus a term which is negligible in $\mathbb{L}^2(\mathbb{P}^n_{\rho})$ if $a<2$. For this model, the  interesting time scale is a longer time scale than the hyperbolic one, so that we take $a=1+\alpha$, with $\alpha>0$. By Theorem \ref{theo:BG} the variance of the previous term can be estimated doing the following estimates. To fix notation we denote the previous integral by
$\mc B_t^{n}(H)$.
By summing and subtracting $\big(\vec{\eta}_{sn^{a}}^{L}(x)\big)^2-\chi(\rho)/L$ inside the sum above and by the inequality $(x+y)^2\leq 2 x^2+2y^2$ we have that
\begin{align}
&\mathbb{E}^n_{\rho} \big[ \big(\mc B_t^{n}(H)\big)^2\big]\notag  \\
&\leq C\mathbb{E}^n_{\rho}\Big[\Big(\int_{0}^t \frac{n^\alpha}{\sqrt n}\sum_{x\in\mathbb{Z}}\nabla_nH\Big(\frac{x}{n}\Big)\Big[\bar{\eta}_{sn^{a}}(x)\bar{\eta}_{sn^{a}}(x+1)-\big(\vec{\eta}_{sn^{a}}^{L}(x)\big)^2+\frac{\chi(\rho)}{L}\Big]ds\Big)^2\Big]\notag\\
&\quad +C\mathbb{E}^n_{\rho}\Big[\Big(\int_{0}^t \frac{n^\alpha}{\sqrt n}\sum_{x\in\mathbb{Z}}\nabla_nH\Big(\frac{x}{n}\Big)\Big[\big(\vec{\eta}_{sn^{a}}^{L}(x)\big)^2-\frac{\chi(\rho)}{L}\Big]ds\Big)^2\Big].\label{eq:vanishingofB2}
\end{align}
By \eqref{eq:BGexpo} the first expectation is bounded by
\[ C_H \frac{n^{2\alpha}}{n}\Big\{\frac{tL}{n^\alpha}+\frac{t^2n}{L^2}\Big\}.
\]
Now, we treat the remaining expectation. By splitting the sum over intervals of size $L$, by the independence under $\nu_{\rho}$ of $\eta(x)$ and $\eta(y)$ whenever $x\neq y$, and by the Cauchy-Schwarz inequality, \eqref{eq:vanishingofB2} can be bounded from above by
\[\frac{t^2n^{2\alpha}}{n}L\sum_{x\in\bb Z}\Big(\nabla_nH\Big(\frac{x}{n}\Big)\Big)^2 \int_\Omega \Big[\big(\vec{\eta}_{sn^a}^{L}(0)\big)^2-\frac{\chi(\rho)}{L} \Big]^2 \nu_{\rho}(d\eta)\leq C_H \frac{t^2n^{2\alpha}}{L} .\]
Putting together the  previous two estimates, optimizing over $L$, taking $L=n^\theta$ and $\theta=(\alpha+1)/2$, we see that the previous errors vanish as $n\to\infty$, if $\alpha<1/3$.  Therefore we obtain the result of  \cite[Theorem 2.6]{G}, which we recall here:

\begin{theorem}[\cite{G}]\label{theo:BGII}

\quad
Fix $\alpha < 1/3$. For any $H \in \mathcal{S}(\R)$ and $t>0$,
\[\lim_{n\to\infty} \E^n_{\rho}\Big[\Big(\int_0^t\frac{n^\alpha}{\sqrt n} \sum_{x\in\Z} H\Big(\frac{x}{n}\Big) \bar\eta_{sn^{1+\alpha}}(x)\bar\eta_{sn^{1+\alpha}}(x+1) ds\Big)^2\Big] = 0.
\]
\end{theorem}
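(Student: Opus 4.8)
The statement is an immediate consequence of the second-order Boltzmann--Gibbs principle, Theorem~\ref{theo:BG}, and the plan is to carry out precisely the estimate that was sketched in Subsection~\ref{sec:asep}. Put $a=1+\alpha$ and, for the fixed $H\in\mathcal{S}(\R)$, set $v(x):=\frac{n^{\alpha}}{\sqrt n}H(x/n)$, so that the integrand inside the square in the statement equals $\sum_{x\in\Z}v(x)\,\bar\eta_{sn^{a}}(x)\bar\eta_{sn^{a}}(x+1)$ and, since $\sum_{x}H^{2}(x/n)\le C_{H}n$ for $n$ large, one has $\|v\|_{2,n}^{2}=n^{-1}\sum_{x}v^{2}(x)\le C_{H}\,n^{2\alpha-1}$. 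I would then introduce a free integer parameter $L=L(n)$, add and subtract $(\vec{\eta}^{L}_{sn^{a}}(x))^{2}-\chi(\rho)/L$ inside the sum, and use $(p+q)^{2}\le 2p^{2}+2q^{2}$ to bound the expectation in the statement by $2\Xi_{1}+2\Xi_{2}$, where $\Xi_{1}$ is the mean square of $\int_{0}^{t}\sum_{x}v(x)\{\bar\eta_{sn^{a}}(x)\bar\eta_{sn^{a}}(x+1)-(\vec{\eta}^{L}_{sn^{a}}(x))^{2}+\chi(\rho)/L\}\,ds$ and $\Xi_{2}$ is the mean square of $\int_{0}^{t}\sum_{x}v(x)\{(\vec{\eta}^{L}_{sn^{a}}(x))^{2}-\chi(\rho)/L\}\,ds$.

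For $\Xi_{1}$ I would simply apply \eqref{eq:BGexpo} with this choice of $v$ and $a=1+\alpha$ (so that $n^{a-1}=n^{\alpha}$), which gives $\Xi_{1}\le Ct\{Ln^{-\alpha}+tnL^{-2}\}\|v\|_{2,n}^{2}\le C_{H}\big(tLn^{\alpha-1}+t^{2}n^{2\alpha}L^{-2}\big)$. For $\Xi_{2}$ I would use a cruder argument: Cauchy--Schwarz in time, $(\int_{0}^{t}F_{s}\,ds)^{2}\le t\int_{0}^{t}F_{s}^{2}\,ds$, followed by stationarity of $\nu_{\rho}$ to replace $\E^{n}_{\rho}[F_{s}^{2}]$ by the corresponding static expectation, which yields $\Xi_{2}\le t^{2}\int_{\Omega}\big(\sum_{x}v(x)[(\vec{\eta}^{L}(x))^{2}-\chi(\rho)/L]\big)^{2}\nu_{\rho}(d\eta)$. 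Expanding the square, the key point is that under the product measure $\nu_{\rho}$ (which has finite fourth moment by Assumption~\ref{ass1}(iii)) the variable $(\vec{\eta}^{L}(x))^{2}-\chi(\rho)/L$ is centered, its covariance with the corresponding variable at site $y$ vanishes whenever the blocks $(x,x+L]$ and $(y,y+L]$ are disjoint, i.e. $|x-y|\ge L$, and is $O(L^{-2})$ otherwise; concretely, if $S=\sum_{i=1}^{L}\bar\eta_{i}$ then $\var(S^{2}/L^{2})=2\chi(\rho)^{2}L^{-2}+O(L^{-3})$. Hence the double sum is at most $\frac{C}{L^{2}}\sum_{x}\sum_{|y-x|<L}|v(x)||v(y)|\le\frac{C}{L}\sum_{x}v^{2}(x)=\frac{Cn}{L}\|v\|_{2,n}^{2}\le C_{H}n^{2\alpha}L^{-1}$, so that $\Xi_{2}\le C_{H}t^{2}n^{2\alpha}L^{-1}$.

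It then remains only to optimize the three error terms $tLn^{\alpha-1}$, $t^{2}n^{2\alpha}L^{-2}$ and $t^{2}n^{2\alpha}L^{-1}$ over $L$. Choosing $L=n^{\theta}$ with $\theta=(1+\alpha)/2$ turns them into $tn^{(3\alpha-1)/2}$, $t^{2}n^{\alpha-1}$ and $t^{2}n^{(3\alpha-1)/2}$ respectively, and all three tend to $0$ as $n\to\infty$ precisely when $\alpha<1/3$, which proves the theorem. I do not expect any real obstacle here: Theorem~\ref{theo:BG} does all the work for $\Xi_{1}$, and the only genuine computation that remains is the static second-moment bound for the squared block average appearing in $\Xi_{2}$, which is elementary given the product structure of $\nu_{\rho}$ and the finiteness of its fourth moment; the rest is bookkeeping of exponents.
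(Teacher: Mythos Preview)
Your proposal is correct and follows essentially the same route as the paper: you add and subtract the centered squared block average, apply Theorem~\ref{theo:BG} to the resulting difference, control the leftover static term by Cauchy--Schwarz in time together with a second-moment computation under the product measure, and optimize at $L=n^{(1+\alpha)/2}$. The only cosmetic difference is in the treatment of $\Xi_{2}$: the paper splits the sum $\sum_x$ into blocks of size $L$ and uses Cauchy--Schwarz together with the bound $\int\big[(\vec\eta^{L}(0))^{2}-\chi(\rho)/L\big]^{2}\,d\nu_\rho\le C L^{-2}$, whereas you expand the square and use the vanishing of covariances for disjoint blocks; both yield the same $C_H\,t^{2}n^{2\alpha}/L$ and hence the same threshold $\alpha<1/3$.
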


From this, one can show Theorem \ref{th:triviality} which says  that up to the time scale $n^{4/3}$ the limiting density field $\mathcal{Y}_t$ does not evolve in time. This result is not optimal, it is conjectured that the temporal invariance of the field should go up to the time scale $n^{3/2}$  (see, for example, \cite[Chapter 5]{S}).

\subsubsection{Weakly asymmetric speed change exclusion processes}
\label{sec:porous}

The speed change exclusion processes have been investigated for example in \cite{LV,S}.  Here we look at their weakly asymmetric version so that $\Omega=\{0,1\}^{\Z}$ and the infinitesimal generator is given by
\begin{equation*}
\begin{split}
\mathcal{L}_nf(\eta)=
&\sum_{x\in\mathbb{Z}}c_{x,x+1}(\eta)\Big(\frac{1}{2}+\frac{b}{2n^\gamma}\Big)\eta(x)(1-\eta(x+1))\nabla_{x,x+1}f(\eta)\\+
&\sum_{x\in\mathbb{Z}}c_{x,x-1}(\eta)\Big(\frac{1}{2}-\frac{b}{2n^\gamma}\Big)\eta(x)(1-\eta(x-1))\nabla_{x,x-1}f(\eta),
\end{split}
\end{equation*}
where $b>0$, and the rate functions $c_{x,y}$ satisfy the translation invariance property: there is $c_{0,1}:\Omega \to [\delta,\delta^{-1}]$ such that $c_{x,x+1}=\tau_x c_{0,1}$. For our approach to work, we need to assume:
\begin{enumerate}
\item {\bf (Gradient) } There exists a local function $h:\Omega\to \bb R$ such that, for any $\eta \in \Omega$, \[\eta(0)\big(1-\eta(1)\big)c_{0,1}(\eta)-\eta(1)\big(1-\eta(0)\big)c_{1,0}(\eta)=(h-\tau_1h)(\eta).\]
\item {\bf (Detailed balance) } For any $\eta\in\Omega$, $c_{0,1}(\eta)=c_{0,1}(\eta^{0,1})$.
\item {\bf (Local polynomial) } There exists $\ell,d \in \N$ such that $c_{0,1}$ is a multivariate polynomial of degree $d$ in the variables $(\eta(-\ell),...,\eta(\ell))$.
\end{enumerate}

These three conditions imply that Assumptions \ref{ass1}--\ref{ass3} hold, the invariant measures being (once again) the Bernoulli product measures parametrized by the density $\rho$. However here we only treat one specific example, in order to illustrate how  the generalization of Theorem \ref{theo:BG} to higher degree polynomials can be used.
Let us take:
\[
c_{0,1}(\eta)=\eta(-1)+\eta(2)+1.
\]
 Straightforward computations give
  \begin{multline*}
  \bar{j}^n_{x,x+1}(\eta)=\frac{1}{2}\big(\bar{\eta}{(x)}-\bar{\eta}{(x+1)}\big)\big({\eta}{(x-1)}+{\eta}{(x+2)}+1\big)\\
  +\frac{b}{2n^\gamma}\Big\{(\eta(x)-\eta(x+1))^2\big({\eta}{(x-1)}+{\eta}{(x+2)}+1\big)-2\chi(\rho)\big(2\rho+1\big)\Big\}.\end{multline*}
  A simple computation shows that the symmetric part of the current can be written as the gradient of $h(\eta)$  where
 $$h(\eta)=\frac12\big(\eta(-1)\eta(0)+\eta(0)\eta(1)-\eta(-1)\eta(1)\big)+ \eta(0).$$
  On the other hand a simple but long computation shows that the remaining part of the current, namely  $\bar j_{x,x+1}^{n,a}$ can be written on the form
  \begin{align}
  \bar j^{n,a}_{x,x+1}&(\eta)= -\frac{b}{2n^\gamma} \Big\{2\bar{\eta}(x-1)\bar{\eta}(x)\bar{\eta}(x+1)+2\bar{\eta}(x)\bar{\eta}(x+1)
\bar{\eta}(x+2)\label{intp1}\\
&+(2+4\rho) \bar\eta(x)\bar\eta(x+1)  \label{intp2}\\
& +  (2\rho-1)\big(\bar\eta(x-1)\bar\eta(x)+\bar\eta(x+1)\bar\eta(x+2)\big)\label{intp30}\\
& + (2\rho-1)\big(\bar\eta(x-1)\bar\eta(x+1)+\bar\eta(x)\bar\eta(x+2)\big) \label{intp3}\\
&  + (4\rho^2-1)\big(\bar\eta(x)+\bar\eta(x+1)\big)+2\rho(\rho-1)\big(\bar\eta(x-1)+\bar\eta(x+2)\big)\Big\}.\label{intp4}\end{align}
It is simple to check that the term \eqref{intp4} can be written as a gradient if and only if $\rho$ is solution to $6\rho^2-2\rho-1=0$. We denote by $\rho_0\in{(0,1)}$ the unique solution of that equation. As for the ASEP (see Section \ref{sec:asep}), we now assume $\rho=\rho_0$ to simplify notations, nevertheless we could treat every value of $\rho$ after redefining the density fluctuation field by removing the transport velocity of the system.
 Then, as above, the integral part of the martingale has the term
  \begin{equation}
\int_{0}^{t}\frac{n^{a-2}}{2\sqrt n}\sum_{x\in\mathbb{Z}}\Delta_nH\Big(\frac{x}{n}\Big)\tau_xh({\eta}_{sn^a})ds\label{eq:t1}\end{equation}
 plus terms coming from \eqref{intp2} \eqref{intp30} and \eqref{intp3} of the form
 \begin{equation}
\int_{0}^{t}\frac{bn^{a-1-\gamma}}{\sqrt n}\sum_{x\in\mathbb{Z}}\nabla_nH\Big(\frac{x}{n}\Big)\bar{\eta}_{sn^a}(x)\bar{\eta}_{sn^a}{(x\pm1)}ds \label{eq:t2}
\end{equation}
 and, from \eqref{intp1},
 \begin{equation}
\int_{0}^{t}\frac{bn^{a-1-\gamma}}{\sqrt n}\sum_{x\in\mathbb{Z}}\nabla_nH\Big(\frac{x}{n}\Big)\bar{\eta}_{sn^a}(x)\bar{\eta}_{sn^a}{(x+1)}\bar{\eta}_{sn^a}{(x+2)}ds. \label{eq:t3}
\end{equation}
The interesting time scale is the diffusive one, namely $a=2$. To treat the first term \eqref{eq:t1}, one can use the Boltzmann-Gibbs principle of \cite{TB} and it can be rewritten as
 \begin{equation*}
\int_{0}^{t}\frac{1}{2\sqrt n}\sum_{x\in\mathbb{Z}}\Delta_nH\Big(\frac{x}{n}\Big)h'(\rho)\bar{\eta}_{sn^a}(x)ds
\end{equation*}
where $h'(\rho)=\partial_\rho\mathbb{E}^n_{\rho}[h(\eta)]$.
The terms \eqref{eq:t2} and \eqref{eq:t3} can be treated by the Boltzmann-Gibbs principle proved in Section \ref{sec:BG} and \ref{higher polynomial}, respectively.
 From this it can be shown Theorem \ref{th:crossover} (see \cite{GJS} and Subsection \ref{ssec:uniqueness}) which says that for $\gamma>1/2$, the limiting process $\mathcal{Y}_t$ is an Ornstein-Uhlenbeck process and for $\gamma=1/2$ it is the unique energy solution of the stochastic Burgers equation.

\subsection{Hamiltonian system with exponential interactions}
\label{sec:hamilt}
We consider the same model as introduced in \cite{BG}. In this case  $\Omega=(0,+\infty)^\Z$ and the infinitesimal generator is equal to $\cL_n=\A+\gamma\S$ where $\gamma>0$, and for local differentiable functions $f: \Omega \to \R$ we define
\begin{align*}
(\A f)(\eta) & := \sum_{x\in\Z}\eta_x(\eta_{x+1}-\eta_{x-1})(\partial_{\eta_x}f)(\eta),\\
(\S f)(\eta)& := \sum_{x\in\Z} (f(\eta^{x,x+1})-f(\eta)).
\end{align*}
The invariant measures are given by
\[
\nu_{\beta,\lambda}(d\eta)=\prod_{x\in\Z} \frac{\mathbf{1}_{\{\eta_x>0\}}}{Z_{\beta,\lambda}} \exp\big\{-\beta\eta_x+\lambda\log(\eta_x)\big\}d\eta_x,
\]
 with $Z_{\beta,\lambda}$ being the partition function. These measures are product and translation invariant, and have finite moments. The Dirichlet form writes as
 \[
 D_n(f)=\sum_{z\in\Z}\int_\Omega (\nabla_{z,z+1}f(\eta))^2\;\nu_{\beta,\lambda}(d\eta)
 \] so that, Assumption \ref{ass2} holds with $\zeta_{0,1}(\eta)=1$.  Let $\langle\cdot\rangle_{\beta,\lambda}$ be the average with respect to $\nu_{\beta,\lambda}$. In this case we have
\begin{align*}
\rho&=\rho(\beta,\lambda)=\langle \eta_0\rangle_{\beta,\lambda}=(\lambda+1)\beta^{-1}\\
\chi& =\chi(\beta,\lambda)=\langle\eta_0^2\rangle_{\beta,\lambda}-\langle\eta_0\rangle^2_{\beta,\lambda} = (\lambda+1)\beta^{-2}.
\end{align*}
The microscopic current is given by
 \[
 \bar{j}^n_{x,x+1}(\eta)=-\bar{\eta}(x)\bar{\eta}(x+1)-\gamma \; (\bar{\eta}(x+1)-\bar{\eta}(x))-\rho(\bar\eta(x)+\bar\eta(x+1))-\rho^2, \]
 so that $h(\eta)=\gamma \eta(0)$, $C(n)=-1$ and $g(\eta)=-\rho(\bar\eta(x)+\bar\eta(x+1))-\rho^2$. Once again, for the sake of simplicity, one can assume $\rho=0$, and the general case $\rho \neq 0$ can be solved by redefining the fluctuation field using the Galilean transformation.

Here the interesting time scale is longer that the hyperbolic one, and from this point one can repeat exactly the same arguments which are detailed in Section \ref{sec:asep} for the ASEP: the integral part of the martingale can be written as in \eqref{eq:B} with $2p-1=1$, and we easily get the statement  of  \cite[Theorem 4]{BG}, whose conclusion reads as in Theorem \ref{theo:BGII} and whose proof
 is similar to the one given in Subsection \ref{sec:asep}.

\section{Proof of the Second-order Boltzmann-Gibbs principle}\label{sec:BG}

In this section we present a proof of  the second-order Boltzmann-Gibbs principle stated in Theorem \ref{theo:BG}, which is  the main result of this work. For that purpose we derive several estimates that are needed in what follows. To keep notation simple in the following arguments, we let $C:=C(\rho)$  denote  a constant (that does not depend on $n$ nor on $t$ nor on the sizes of the boxes involved) that may change from line to line.  For $\ell\in\mathbb{N}$ and $x\in\mathbb{Z}$, we introduce two empirical averages on boxes of sixe $\ell$, the first one being to the right of the site $x$, the second one being to the left:
\[
\vec\eta^\ell(x)=\frac{1}{\ell}\sum_{y=x+1}^{x+\ell} \eta(y), \qquad \vecleft\eta^\ell(x):=\frac{1}{\ell}\sum_{y=x-\ell}^{x-1} \eta(y).
\]
For a function $\varphi: \Omega \to \mathbb R$ we denote by $\|\varphi\|_2^2$ its  $\bb L^2(\nu_\rho)$-norm:

$$\|\varphi\|^2_2=\int_\Omega(\varphi(\eta))^2\;\nu_\rho(d\eta).$$

\subsection{Auxiliary estimates} \label{ssec:oneblock}

\begin{proposition} [One-block estimate]
\label{prop:one-block}
Fix $\ell_0\in \mathbb N$ and let $\varphi,\psi: \Omega \to \mathbb R$ be  local
functions which have mean zero with respect to $\nu_\rho$, and such that \begin{enumerate}
\item the support of $\varphi$ does not intersect the set of points 
${\{0,\cdots,\ell_0\}}$ ,
\item the support of $\psi$ does not intersect the set of points $\{-\ell_0, \cdots,-1\}$.
\end{enumerate} There exists a constant $C$, such that for any $t>0$ and any function $v\in\ell^2(\mathbb{Z})$:
\[
\mathbb{E}^n_{\rho}\Big[\Big(\int_{0}^t ds \sum_{x\in\mathbb{Z}}v(x)\varphi(\tau_x\eta_{sn^a})\big(\bar{\eta}_{sn^a}(x+1)-\vec{\eta}_{sn^a}^{\ell_0}(x)\big)\Big)^2\Big] \leq
 C\frac{t\ell_0^2}{n^{a-1}}\|\varphi\|_2^2\; \|v\|_{2,n}^2,
\]
\[
\mathbb{E}^n_{\rho}\Big[\Big(\int_{0}^t ds \sum_{x\in\mathbb{Z}}v(x)\psi(\tau_x\eta_{sn^a})\big(\bar{\eta}_{sn^a}(x)-\vecleft{\eta}_{sn^a}^{\ell_0}(x)\big)\Big)^2\Big] \leq
 C\frac{t\ell_0^2}{n^{a-1}}\|\psi\|_2^2\; \|v\|_{2,n}^2.
\]
\end{proposition}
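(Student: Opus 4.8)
\emph{Proof plan.} The strategy is to bound the time integral by a static Dirichlet-form (i.e. $H_{-1}$) quantity, and then to estimate that quantity using the product structure of $\nu_\rho$ together with the support hypotheses, through a telescoping-plus-integration-by-parts identity that avoids any spectral gap input. The two displayed inequalities are mirror images of one another, so I will only describe the first; the second follows by the same argument after reflecting the lattice and using the support hypothesis on $\psi$ in place of that on $\varphi$.

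First I would reduce to a variational estimate. Set $F_v(\eta):=\sum_{x\in\mathbb Z}v(x)\,\varphi(\tau_x\eta)\big(\bar\eta(x+1)-\vec{\eta}^{\ell_0}(x)\big)$; this is a well-defined element of $\mathbb{L}^2(\nu_\rho)$ since $v\in\ell^2(\mathbb Z)$ and $\varphi$ is local with finite range, and it has $\nu_\rho$-mean zero because $\int\varphi\,d\nu_\rho=0$. The process evolves with generator $n^a\mathcal L_n$ and symmetric Dirichlet form $n^aD_n$, so the classical bound for additive functionals of a stationary Markov process (see e.g. \cite{gj2014}) gives
\[
\mathbb{E}^n_{\rho}\Big[\Big(\int_0^t F_v(\eta_{sn^a})\,ds\Big)^2\Big]\;\le\; C\,t\,\sup_{g}\Big\{2\langle F_v,g\rangle_\rho-n^aD_n(g)\Big\},
\]
where $\langle\cdot\,,\cdot\rangle_\rho$ is the $\mathbb{L}^2(\nu_\rho)$-inner product and the supremum runs over local functions $g$. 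Replacing $g$ by $n^{-a}h$, the supremum becomes $n^{-a}\sup_h\{2\langle F_v,h\rangle_\rho-D_n(h)\}$; since $\sum_x v^2(x)=n\|v\|_{2,n}^2$, it then suffices to show that $\sup_h\{2\langle F_v,h\rangle_\rho-D_n(h)\}\le C(\rho)\,\ell_0^2\,\|\varphi\|_2^2\,\sum_{x\in\mathbb Z}v^2(x)$.

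The heart of the argument is the next step. Using $\bar\eta(x+1)-\vec{\eta}^{\ell_0}(x)=\frac1{\ell_0}\sum_{y=x+1}^{x+\ell_0}\big(\bar\eta(x+1)-\bar\eta(y)\big)$ and the identity $\bar\eta(z)-\bar\eta(z+1)=-\nabla_{z,z+1}[\eta(z)]$, one telescopes $\bar\eta(x+1)-\bar\eta(y)=-\sum_{z=x+1}^{y-1}\nabla_{z,z+1}[\eta(z)]$. Every bond $\{z,z+1\}$ produced here has $x+1\le z\le x+\ell_0-1$, and by the hypothesis $\mathrm{supp}(\varphi)\cap\{0,\dots,\ell_0\}=\emptyset$ the function $\tau_x\varphi$ depends on neither $\eta(z)$ nor $\eta(z+1)$, so $\nabla_{z,z+1}[\varphi(\tau_x\eta)]=0$. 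Combining the swap invariance of $\nu_\rho$ (Assumption~\ref{ass1}) — which makes $\nabla_{z,z+1}[\eta(z)]$ antisymmetric under $\eta\mapsto\eta^{z,z+1}$ — with this cancellation and the Leibniz rule for $\nabla_{z,z+1}$, one obtains for each such $z$
\[
\big\langle\varphi(\tau_x\eta)\,\nabla_{z,z+1}[\eta(z)]\,,\,h\big\rangle_\rho=-\tfrac12\big\langle\nabla_{z,z+1}[\eta(z)]\,\varphi(\tau_x\eta)\,,\,\nabla_{z,z+1}h\big\rangle_\rho,
\]
so that $\langle F_v,h\rangle_\rho$ is expressed purely through the Dirichlet integrands $(\nabla_{z,z+1}h)^2$ of $h$, with no remainder. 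From here it is routine: by Cauchy--Schwarz in $\mathbb{L}^2(\nu_\rho)$, using that $\varphi(\tau_x\cdot)$ is independent of $\eta(z),\eta(z+1)$ so that $\|\nabla_{z,z+1}[\eta(z)]\,\varphi(\tau_x\eta)\|_2^2=2\chi(\rho)\|\varphi\|_2^2$, and using that a fixed $z$ occurs with multiplicity $\le\ell_0$ in the $y$-sum, one bounds
\[
|\langle F_v,h\rangle_\rho|\le \frac{(2\chi(\rho))^{1/2}\|\varphi\|_2}{2}\sum_{x}|v(x)|\sum_{z=x+1}^{x+\ell_0-1}\|\nabla_{z,z+1}h\|_2 .
\]
A Cauchy--Schwarz over $z$ (costing $\ell_0^{1/2}$), then Young's inequality splitting $|v(x)|$ from the rest, then exchanging the order of summation and invoking $D_n(h)\ge\delta\sum_z\|\nabla_{z,z+1}h\|_2^2$ from Assumption~\ref{ass2}, gives $|\langle F_v,h\rangle_\rho|\le C\|\varphi\|_2\ell_0^{1/2}\big(\tfrac A2\sum_x v^2(x)+\tfrac{\ell_0}{2A\delta}D_n(h)\big)$ for any $A>0$. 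Choosing $A$ of order $\|\varphi\|_2\ell_0^{3/2}$ so as to absorb $\tfrac12 D_n(h)$ yields the desired variational bound, the factor $\ell_0^2$ coming from $\ell_0^{1/2}\cdot\ell_0^{3/2}$. The second inequality is obtained identically, replacing right boxes by left ones and using $\mathrm{supp}(\psi)\cap\{-\ell_0,\dots,-1\}=\emptyset$ to kill the corresponding boundary term.

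\emph{Main obstacle.} The only delicate point is the integration-by-parts step: one must arrange the telescoping so that every bond it involves lies outside the support of $\tau_x\varphi$ (resp. $\tau_x\psi$), for only then does the cross term $\propto\nabla_{z,z+1}[\varphi(\tau_x\eta)]$ vanish and $\langle F_v,h\rangle_\rho$ reduce to something controlled by $D_n(h)$ alone. This localization is precisely the role of the support hypotheses, and it is what replaces the spectral gap inequality used in \cite{gj2014,GJS}; everything else is Cauchy--Schwarz, Young, and combinatorial bookkeeping of the power $\ell_0^2$.
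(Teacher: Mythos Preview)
Your proposal is correct and follows essentially the same route as the paper: reduce to the $H_{-1}$ variational formula via the Kipnis--Varadhan/KLO bound, telescope $\bar\eta(x+1)-\vec\eta^{\ell_0}(x)$ into nearest-neighbour differences lying entirely on bonds $\{z,z+1\}\subset\{x+1,\dots,x+\ell_0\}$, use the support hypothesis on $\varphi$ so that the swap $\eta\mapsto\eta^{z,z+1}$ leaves $\varphi(\tau_x\eta)$ invariant and produces $\nabla_{z,z+1}f$, and finish with Young's inequality. The only cosmetic differences are that the paper keeps the factor $n^a$ inside the variational formula (choosing $A_x=\ell_0 v(x)/(2n^a)$ directly) and inserts the weights $\zeta^n_{z,z+1}$ into the Young step so as to recover $I^n_{z,z+1}(f)$ exactly and then invoke the counting identity of Lemma~\ref{lemma:dir1}, whereas you first pull out $n^{-a}$, do Cauchy--Schwarz in $\bb L^2(\nu_\rho)$ and over $z$, and use the crude lower bound $\zeta^n\ge\delta$ to compare $\sum_z\|\nabla_{z,z+1}h\|_2^2$ with $D_n(h)$.
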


\begin{proof}
We start by proving the first estimate above.
By  \cite[Lemma 2.4]{KLO}, we bound the previous expectation from above by
\begin{equation*}
 Ct\Big\|\sum_{x\in\mathbb{Z}}v(x)\varphi(\tau_x\eta)\big(\bar{\eta}(x+1)-\vec{\eta}^{\ell_0}(x)\big)\Big\|_{-1}^2,\end{equation*}
where the $H_{-1}$-norm is defined through a variational formula, and in particular the previous expression is equal to
\begin{equation}
Ct\;\sup_{f\in {\bb L}^2(\nu_\rho)}\Big\{2\int \sum_{x\in\mathbb{Z}}v(x)\varphi(\tau_x\eta)\big(\bar{\eta}(x+1)-\vec{\eta}^{\ell_0}(x)\big)f(\eta)\nu_\rho(d\eta)-n^aD_n(f)\Big\}. \label{eq:varia}
\end{equation}
Now, since $\bar{\eta}(x+1)-\vec{\eta}^{\ell_0}(x)$ is written  as the gradient:
\begin{equation*}
\bar{\eta}(x+1)-\vec{\eta}^{\ell_0}(x)=\frac{1}{\ell_0}\sum_{y=x+2}^{x+\ell_0}\sum_{z=x+1}^{y-1}(\bar{\eta}(z)-\bar{\eta}(z+1)),
\end{equation*}
 we can write the integral written in \eqref{eq:varia} as twice its half and in one of the terms we make the exchange $\eta$ to $\eta^{z,z+1}$, for which the measure $\nu_\rho$ is invariant, to get that 
\begin{align}
&2\int\sum_{x\in\mathbb{Z}}v(x)\varphi(\tau_x\eta)\Big\{\frac{1}{\ell_0}\sum_{y=x+2}
^{x+\ell_0}\sum_{z=x+1}^{y-1}(\bar{\eta}(z)-\bar{\eta}(z+1))\Big\}f(\eta)\nu_\rho(d\eta)\notag\\
& =  2\int\sum_{x\in\mathbb{Z}}v(x)\varphi(\tau_x\eta)\Big\{\frac{1}{\ell_0}\sum_{y=x+2}
^{x+\ell_0}\sum_{z=x+1}^{y-1}(\bar{\eta}(z+1)-\bar{\eta}(z))\Big\}f(\eta^{z,z+1})\nu_\rho(d\eta)\notag\\
 & =\int\sum_{x\in\mathbb{Z}}v(x)\varphi(\tau_x\eta)\Big\{\frac{1}{\ell_0}\sum_{y=x+2}
^{x+\ell_0}\sum_{z=x+1}^{y-1}(\bar{\eta}(z)-\bar{\eta}(z+1))\Big\}(f(\eta)-f(\eta^{z,z+1}))\nu_\rho(d\eta).\notag
\end{align}
By Young's inequality, for any sequence $(A_x)_{x\in\mathbb{Z}}$ of positive real numbers, the last integral  is bounded by
\begin{align}&\frac{1}{\ell_0}\sum_{x\in\mathbb{Z}}\sum_{y=x+2}
^{x+\ell_0}\sum_{z=x+1}^{y-1}v(x)\frac{A_x}{2}\int(\varphi(\tau_x\eta))^2\frac{(\bar{\eta}(z)-\bar{\eta}(z+1))^2}{\zeta^n_{z,z+1}(\eta)}\nu_\rho(d\eta) \label{eq:first}\\
+&\frac{1}{\ell_0}\sum_{x\in\mathbb{Z}}\sum_{y=x+2}
^{x+\ell_0}\sum_{z=x+1}^{y-1}\frac{v(x)}{2A_x}\int\zeta^n_{z,z+1}(\eta)\big(f(\eta)-f(\eta^{z,z+1})\big)^2\nu_\rho(d\eta). \label{eq:second}
\end{align}
By taking $2A_x=\ell_0 v(x)/n^a$ and by independence of $\eta(x), \eta(y)$ (for $x\neq y$) with respect to $\nu_\rho$,  the first term \eqref{eq:first} is bounded by
\begin{equation}\label{error_1}
C\frac{1}{n^a}\sum_{x\in\mathbb{Z}}\sum_{y=x+2}
^{x+\ell_0}\sum_{z=x+1}^{y-1}v^2(x)\|\varphi\|_2^2\leq C\frac{\ell_0^2}{n^a}\sum_{x\in\mathbb{Z}}v^2(x)\|\varphi\|_2^2,
\end{equation}
for some positive constant $C(\rho)$. From Lemma \ref{lemma:dir1} (proved at the end of this section), the second term \eqref{eq:second} is bounded by
\begin{equation}\label{error_2}
\frac{n^a}{\ell_0^2}\sum_{x\in\mathbb{Z}}\sum_{y=x+2}
^{x+\ell_0}\sum_{z=x+1}^{y-1}I^n_{z,z+1}(f) \le {n^a}D_n(f).
\end{equation}
Putting together \eqref{error_1}, \eqref{error_2}, the proof of the first estimate ends.
For the second estimate we notice that  the function $\bar{\eta}(x)-\vecleft\eta^\ell(x)$ can also be written as a gradient as above. Then the same argument applies.
\end{proof}

\begin{proposition}[Doubling the box]\label{doub box}
Fix $\ell_k\in\mathbb{N}$, and define $\ell_{k+1}=2\ell_k$. Let $\varphi,\psi: \Omega \to \mathbb R$ be  local
functions which have mean zero with respect to $\nu_\rho$, and such that \begin{enumerate}
\item the support of $\varphi$ does not intersect the set of points 
${\{0,\cdots,\ell_{k+1}\}}$ ,
\item the support of $\psi$ does not intersect the set of points $\{-\ell_{k+1}, \cdots,-1\}$.
\end{enumerate}
There exists a constant $C$, such that for any $t>0$ and any function $v\in\ell^2(\mathbb{Z})$:
\begin{align*}
\mathbb{E}^n_{\rho}\Big[\Big(\int_{0}^t ds \sum_{x\in\mathbb{Z}}v(x)\varphi(\tau_x\eta_{sn^a})\big(\vec{\eta}_{sn^a}^{\ell_k}(x)-\vec{\eta}_{sn^a}^{\ell_{k+1}}(x)\big)\Big)^2\Big]
&\leq
 C\frac{t\ell_k^2}{n^{a-1}}\|\varphi\|_2^2\|v\|_{2,n}^2,\\
\mathbb{E}^n_{\rho}\Big[\Big(\int_{0}^t ds \sum_{x\in\mathbb{Z}}v(x)\psi(\tau_x\eta_{sn^a})\big(\vecleft{\eta}_{sn^a}^{\ell_k}(x)-\vecleft{\eta}_{sn^a}^{\ell_{k+1}}(x)\big)\Big)^2\Big]
&\leq
 C\frac{t\ell_k^2}{n^{a-1}}\|\psi\|_2^2\|v\|_{2,n}^2. 
\end{align*}
\end{proposition}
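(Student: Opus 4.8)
The statement is the "doubling the box" estimate, and the natural strategy is to mimic exactly the proof of Proposition \ref{prop:one-block}, since $\vec\eta^{\ell_k}(x)-\vec\eta^{\ell_{k+1}}(x)$ is again a discrete gradient of the configuration, just over a different range of bonds. First I would write this difference as a telescoping sum of nearest-neighbour increments $\bar\eta(z)-\bar\eta(z+1)$. Explicitly, using $\ell_{k+1}=2\ell_k$, one has
\[
\vec\eta^{\ell_k}(x)-\vec\eta^{\ell_{k+1}}(x)=\frac{1}{\ell_{k+1}}\sum_{y=x+1}^{x+\ell_k}\Big(\bar\eta(y)-\bar\eta(y+\ell_k)\Big)=\frac{1}{\ell_{k+1}}\sum_{y=x+1}^{x+\ell_k}\sum_{z=y}^{y+\ell_k-1}\big(\bar\eta(z)-\bar\eta(z+1)\big),
\]
so that every bond $\{z,z+1\}$ involved lies in the range $z\in\{x+1,\dots,x+2\ell_k-1\}$, with multiplicity at most $\ell_k$, and the total number of terms is $O(\ell_k^2)$ with prefactor $1/\ell_{k+1}=1/(2\ell_k)$. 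This is structurally identical to the decomposition used for $\bar\eta(x+1)-\vec\eta^{\ell_0}(x)$ in the one-block estimate.

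Next I would invoke \cite[Lemma 2.4]{KLO} to pass to the $H_{-1}$-norm, then expand that norm via its variational formula, exactly as in \eqref{eq:varia}. Using the invariance of $\nu_\rho$ under $\eta\mapsto\eta^{z,z+1}$ I would symmetrize the linear term, producing an integrand of the form $\varphi(\tau_x\eta)\,(\bar\eta(z)-\bar\eta(z+1))\,(f(\eta)-f(\eta^{z,z+1}))$ summed over $x$, $y$, $z$. Then Young's inequality with weights $A_x$ splits this into a "$\varphi$-part" and a "Dirichlet-part". Choosing $2A_x=\ell_{k+1}v(x)/n^a$ (the analogue of the choice $2A_x=\ell_0 v(x)/n^a$), the $\varphi$-part is controlled by independence under $\nu_\rho$ and by the bound $\zeta^n_{z,z+1}\geq\delta$: it is bounded by $C n^{-a}\sum_x v^2(x)\|\varphi\|_2^2$ times the number of $(y,z)$ pairs over $\ell_{k+1}^2$, which gives $C\ell_k^2 n^{-a}\|v\|_{2,n}^2 n\,\|\varphi\|_2^2$ after recalling $\|v\|_{2,n}^2=n^{-1}\sum_x v^2(x)$, i.e.\ the claimed $Ct\ell_k^2 n^{1-a}\|\varphi\|_2^2\|v\|_{2,n}^2$. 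The Dirichlet-part is bounded by $(n^a/\ell_{k+1}^2)\sum_{x,y,z}I^n_{z,z+1}(f)$, and since each bond $\{z,z+1\}$ is counted at most $C\ell_k^2$ times across the triple sum, this is $\leq n^a D_n(f)$ — which is exactly the term subtracted in the variational formula, so it cancels. The only real bookkeeping difference from Proposition \ref{prop:one-block} is verifying that the combinatorial multiplicity of each bond in the new telescoping sum is still $O(\ell_k^2)$ relative to the $\ell_{k+1}^{-2}$ prefactor; this relies crucially on $\ell_{k+1}=2\ell_k$, and on the support hypotheses on $\varphi,\psi$ which guarantee $\varphi(\tau_x\eta)$ does not depend on $\eta(z)$ for the bonds $z$ appearing (so that the independence argument in the $\varphi$-part goes through).

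The second inequality, for $\psi$ and the left-sided averages $\vecleft\eta^{\ell_k}(x)-\vecleft\eta^{\ell_{k+1}}(x)$, follows by the same argument after writing that difference as a gradient over bonds to the left of $x$; the support assumption on $\psi$ plays the same role there. I do not expect any genuine obstacle: the main point is simply that the combinatorial estimates are robust enough that replacing "$\bar\eta(x+1)$ vs. average over $\ell_0$" by "average over $\ell_k$ vs. average over $2\ell_k$" changes nothing essential, because in both cases one is estimating a gradient supported on $O(\ell)$ bonds with an $O(1/\ell)$ prefactor, and the Dirichlet cost is matched exactly by the subtracted $n^a D_n(f)$. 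The analogous proof in \cite{gj2014,GJS} used the spectral gap at this step; here it is avoided precisely because the quantity being replaced is already an explicit gradient.
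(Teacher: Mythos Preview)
Your proof is correct and uses the same core ingredients as the paper: the Kipnis--Varadhan bound \cite[Lemma 2.4]{KLO}, the gradient decomposition, the exchange $\eta\mapsto\eta^{z,z+1}$, Young's inequality, and Lemma \ref{lemma:dir1}. There is one organisational difference worth noting. You keep the full double sum $\frac{1}{\ell_{k+1}}\sum_{y}\sum_{z}(\bar\eta(z)-\bar\eta(z+1))$ inside a single $H_{-1}$-norm and then apply Young's inequality with $2A_x=\ell_{k+1}v(x)/n^a$, so that the Dirichlet part carries the prefactor $n^a/\ell_{k+1}^2$ and the combinatorial multiplicity $\leq\ell_k^2$ of each bond brings it back to $n^a D_n(f)$. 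The paper instead first applies a convexity inequality to the sum over $y$, obtaining
\[
Ct\,\ell_k\sum_{y=1}^{\ell_k}\Big\|\sum_{x\in\mathbb{Z}}v(x)\varphi(\tau_x\eta)\,\frac{1}{2\ell_k}\big(\bar\eta(y+x)-\bar\eta(y+x+\ell_k)\big)\Big\|_{-1}^2,
\]
and then treats each $y$ separately with the choice $4A_x=v(x)/n^a$; the extra $\ell_k$ from convexity and the final $\sum_{y=1}^{\ell_k}$ recover the same $\ell_k^2$. Your route is marginally more direct and closer in spirit to the one-block proof; the paper's route makes the Dirichlet bookkeeping a one-line appeal to the second identity in Lemma \ref{lemma:dir1}. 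Neither buys anything the other does not.
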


\begin{proof}
As above, we only write the proof for the first estimate since for the other one it is completely analogous. As in Proposition \ref{prop:one-block}, the important fact is that
\begin{equation*}
\vec{\eta}^{\ell_k}(x)-\vec{\eta}^{\ell_{k+1}}(x)=\frac{1}{2\ell_{k}}\sum_{y=x+1}^{x+\ell_k}(\bar{\eta}(y)-\bar{\eta}(y+\ell_{k})).
\end{equation*}
By \cite[Lemma 2.4]{KLO}, the change of variables $y \mapsto y-x$ and a standard convexity inequality, we can bound  from above the first expectation appearing in the statement of the proposition  by
\begin{equation}\label{eq:init}
Ct\ell_k\sum_{y=1}^{\ell_k}\Big\|\sum_{x\in\mathbb{Z}}v(x)\varphi(\tau_x\eta)\frac{1}{2\ell_{k}}(\bar{\eta}(y+x)-\bar{\eta}(y+x+\ell_{k}))\Big\|_{-1}^2,\end{equation}
which is equal to 
\begin{multline*}
Ct\ell_k\sum_{y=1}^{\ell_k}\sup_{f\in {\bb L}^2(\nu_\rho)}\Big\{2\int \sum_{x\in\mathbb{Z}}v(x)\varphi(\tau_x\eta)\frac{1}{2\ell_{k}}(\bar{\eta}(y+x)-\bar{\eta}(y+x+\ell_{k}))f(\eta)\nu_\rho(d\eta)\\-n^aD_n(f)\Big\}.
\end{multline*}
As above, the function $\bar{\eta}(y+x)-\bar{\eta}(y+x+\ell_k)$ can be written as a gradient:
\begin{equation*}
\bar{\eta}(y+x)-\bar{\eta}(y+x+\ell_k)=\sum_{z=y+x}^{y+x+\ell_k-1}(\bar{\eta}(z)-\bar{\eta}(z+1)),
\end{equation*}
and by writing the previous integral as twice its half  and making, in one of the terms,  the exchange $\eta$ to $\eta^{z,z+1}$, which lets $\varphi(\tau_x\eta)$ invariant for any $z$ that is involved, we get
\begin{equation*}
\begin{split}
&2\int \sum_{x\in\mathbb{Z}}v(x)\varphi(\tau_x\eta)\frac{1}{2\ell_k}(\bar{\eta}(y+x)-\bar{\eta}(y+x+\ell_k))f(\eta)\nu_\rho(d\eta)\\
 & = \sum_{x\in\mathbb{Z}}v(x)\varphi(\tau_x\eta)\frac{1}{2\ell_k}\sum_{z=y+x}^{y+x+\ell_k-1}(\bar{\eta}(z)-\bar{\eta}(z+1))(f(\eta)-f(\eta^{z,z+1}))\nu_\rho(d\eta).
\end{split}
\end{equation*}
By Young's inequality we bound the last expression above by
\begin{equation*}
\begin{split}
& \sum_{x\in\mathbb{Z}}\sum_{z=y+x}^{y+x+\ell_k-1}\frac{v(x)A_x}{4\ell_k}\int \big(\varphi(\tau_x\eta)\big)^2\frac{(\bar{\eta}(z)-\bar{\eta}(z+1))^2}{\zeta^n_{z,z+1}(\eta)}\nu_\rho(d\eta)\\
+& \sum_{x\in\mathbb{Z}}\sum_{z=y+x}^{y+x+\ell_k-1}\frac{v(x)}{4\ell_k A_x}\int \zeta^n_{z,z+1}(\eta)(f(\eta)-f(\eta^{z,z+1}))^2\nu_\rho(d\eta).
\end{split}
\end{equation*}
By taking $4A_x=v(x)/n^a$ and doing similar estimates to the ones of the previous proposition we bound last expression by
\begin{equation*}
\frac{C}{ \; n^a}\|\varphi\|_2^2\sum_{x\in\mathbb{Z}}v^2(x)+\frac{n^a}{\ell_k}\sum_{x\in\mathbb{Z}}\sum_{z=y+x}^{y+x+\ell_k-1}I^n_{z,z+1}(f).
\end{equation*}
Now, invoking Lemma \ref{lemma:dir1},  \eqref{eq:init} is bounded from above by
\begin{equation*}
C\frac{t\ell_k}{\; n^{a-1}}\sum_{y=1}^{\ell_k}\|\varphi\|_2^2\|v\|_{2,n}^2\le \frac{Ct\ell_k^2}{n^{a-1}}\|\varphi\|_2^2\|v\|_{2,n}^2,
\end{equation*}
which proves the result.
\end{proof}
From the previous results  we obtain, by similar arguments, the following result:

\begin{corollary} 
\label{prop:one-block3}
Fix $\ell_0, M\in\bb {N}$,  and let $\varphi: \Omega \to \mathbb R$ be a  local
function which has mean zero with respect to $\nu_\rho$, and whose support does not intersect the set of points 
$\{-2^{M-1}\ell_0,\cdots,-1\}$.

There exists a constant $C$, such that for any $t>0$ and any function $v\in\ell^2(\mathbb{Z})$:
\[
\mathbb{E}^n_{\rho}\Big[\Big(\int_{0}^t ds \sum_{x\in\mathbb{Z}}v(x)\varphi(\tau_x\eta_{sn^a})(x)\big(\vecleft{\eta}^{2^{M-1}\ell_0}_{sn^a}(x)-\vecleft{\eta}_{sn^a}^{\ell_0}(x)\big)\Big)^2\Big] \leq
 C\frac{t\ell_0^2}{\; n^{a-1}}\|\varphi\|_2^2\|v\|_{2,n}^2.
\]
\end{corollary}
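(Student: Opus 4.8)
The plan is to deduce Corollary~\ref{prop:one-block3} from Proposition~\ref{doub box} by a telescoping (dyadic) decomposition of the difference $\vecleft{\eta}^{2^{M-1}\ell_0}(x)-\vecleft{\eta}^{\ell_0}(x)$. Setting $\ell_k=2^k\ell_0$ for $k=0,1,\dots,M-1$, I would write
\[
\vecleft{\eta}^{2^{M-1}\ell_0}(x)-\vecleft{\eta}^{\ell_0}(x)=\sum_{k=0}^{M-2}\big(\vecleft{\eta}^{\ell_{k+1}}(x)-\vecleft{\eta}^{\ell_k}(x)\big),
\]
so that the time integral in the statement becomes a sum over $k$ of the integrals appearing in the second estimate of Proposition~\ref{doub box}. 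Note the hypothesis that the support of $\varphi$ avoids $\{-2^{M-1}\ell_0,\dots,-1\}$ guarantees that for \emph{every} $k\le M-2$ the support of $\varphi$ avoids $\{-\ell_{k+1},\dots,-1\}$, which is exactly what Proposition~\ref{doub box} requires at scale $\ell_k$.

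Next I would bound the $\bb L^2(\P^n_\rho)$-norm of a sum of $M-1$ terms using the elementary inequality $\big(\sum_{k=0}^{M-2}a_k\big)^2\le (M-1)\sum_{k=0}^{M-2}a_k^2$ (Cauchy--Schwarz), apply Proposition~\ref{doub box} to each summand, and obtain the upper bound
\[
C(M-1)\sum_{k=0}^{M-2}\frac{t\ell_k^2}{n^{a-1}}\|\psi\|_2^2\|v\|_{2,n}^2
\]
with $\psi=\varphi$. Since $\sum_{k=0}^{M-2}\ell_k^2=\ell_0^2\sum_{k=0}^{M-2}4^k\le \tfrac{1}{3}\ell_0^2\,4^{M-1}=\tfrac{1}{3}(2^{M-1}\ell_0)^2$, and the prefactor $M-1$ is dominated by a constant times $2^{M-1}$ (indeed $M-1\lesssim 2^{M-1}$), the whole bound is at most $C\,t\,(2^{M-1}\ell_0)^2\,\|\varphi\|_2^2\|v\|_{2,n}^2/n^{a-1}$. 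Writing $\tilde\ell_0:=2^{M-1}\ell_0$ this is precisely $C\,t\,\tilde\ell_0^{\,2}\,\|\varphi\|_2^2\|v\|_{2,n}^2/n^{a-1}$, which is the claimed estimate with $\ell_0$ in the statement playing the role of $\tilde\ell_0$. (If instead one wants the bound stated literally with $\ell_0^2$ on the right, one absorbs the factor $4^{M-1}M$ into the constant $C=C(M,\ell_0)$; either reading is routine once the telescoping is in place.)

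The only genuinely delicate point — and the place I would be most careful — is the bookkeeping of the support condition across scales: one must check that a single hypothesis on $\varphi$ (disjointness from $\{-2^{M-1}\ell_0,\dots,-1\}$) simultaneously licenses the use of Proposition~\ref{doub box} at all intermediate scales $\ell_0,2\ell_0,\dots,2^{M-2}\ell_0$, and that the averages $\vecleft{\eta}^{\ell_k}(x)$ are all supported to the left of $x$ so that the gradient representation used in the proof of Proposition~\ref{doub box} leaves $\varphi(\tau_x\eta)$ invariant under each bond flip $\eta\mapsto\eta^{z,z+1}$ that occurs. Everything else is the summation of a geometric series and a Cauchy--Schwarz inequality, so I would present this as a short corollary rather than a self-contained argument, explicitly pointing to the ``similar arguments'' in Propositions~\ref{prop:one-block} and~\ref{doub box}.
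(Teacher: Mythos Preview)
Your telescoping-plus-doubling approach is exactly what the paper intends by ``similar arguments'': the corollary is meant to follow from Proposition~\ref{doub box} via the dyadic decomposition you wrote down, and your check of the support condition across all intermediate scales is the right point to emphasise.

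Two remarks. First, your summation step is slightly off. After Cauchy--Schwarz you get $(M-1)\sum_{k=0}^{M-2}\ell_k^2\le \tfrac{M-1}{3}\,4^{M-1}\ell_0^2$, and bounding $M-1\lesssim 2^{M-1}$ then yields a cubic quantity $2^{M-1}\cdot 4^{M-1}\ell_0^2$, not $(2^{M-1}\ell_0)^2$ as you claim. The clean fix is to use Minkowski's inequality (as the paper does in the proof of Proposition~\ref{prop:final-size}) instead of the crude $(\sum a_k)^2\le (M-1)\sum a_k^2$: Minkowski gives
\[
\Big(\sum_{k=0}^{M-2}\big(C\,t\,\ell_k^2\,n^{-(a-1)}\|\varphi\|_2^2\|v\|_{2,n}^2\big)^{1/2}\Big)^2
= C\,t\,n^{-(a-1)}\|\varphi\|_2^2\|v\|_{2,n}^2\Big(\sum_{k=0}^{M-2}\ell_k\Big)^2
\le C\,t\,(2^{M-1}\ell_0)^2\,n^{-(a-1)}\|\varphi\|_2^2\|v\|_{2,n}^2,
\]
with no spurious factor.

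Second, your instinct about the right-hand side is correct: the telescoping route naturally produces $(2^{M-1}\ell_0)^2$ rather than $\ell_0^2$, so the literal statement requires the constant to depend on $M$. In the only place the corollary is invoked (Proposition~\ref{prop:final-size}, term \eqref{eq:dec3}) one takes $\varphi=\vec\eta^L$ with $\|\varphi\|_2^2=\chi(\rho)/L$ and $2^{M-1}\ell_0=L/2$, so the $(2^{M-1}\ell_0)^2$ bound contributes a term of order $L$, which is already present and absorbs it; the $\ell_0^2/L$ written there is in any case dominated by $L$. So either reading suffices for the application, and your proof is fine once the Minkowski correction is made.
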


\begin{proposition}\label{prop:decomp}
There exists a constant $C(\rho)>0$ such that, for any   $t>0$ and any function $v\in\ell^2(\mathbb{Z})$:
\begin{multline*}
\mathbb{E}^n_{\rho}\Big[\Big(\int_{0}^t ds \sum_{x\in\mathbb{Z}}v(x)\Big\{\bar{\eta}_{sn^a}(x)\vec{\eta}_{sn^a}^{L}(x)-(\vec{\eta}_{sn^a}^{L}(x))^2+\frac{1}{2L}
\big(\bar{\eta}_{sn^a}(x)-\bar{\eta}_{sn^a}(x+1)\big)\Big\}\Big)^2\Big]\\ \leq
 C(\rho)\frac{tL}{n^{a-1}}\|v\|_{2,n}^2.
\end{multline*}
\end{proposition}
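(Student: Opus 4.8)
The plan is to follow the same scheme as in Propositions \ref{prop:one-block} and \ref{doub box}. Write $W_x$ for the summand inside the time integral; a short computation using independence under $\nu_\rho$ shows $\mathbb E^n_\rho[W_x]=0$ (it is convenient to think of the correction term in the squared form $\tfrac1{2L}(\bar\eta(x)-\bar\eta(x+1))^2$, which is the shape under which the argument below closes, since $\mathbb E^n_\rho[(\bar\eta(x)-\bar\eta(x+1))^2]=2\chi(\rho)$ exactly compensates $\mathbb E^n_\rho[(\vec\eta^L(x))^2]=\chi(\rho)/L$). By \cite[Lemma 2.4]{KLO}, the left-hand side is bounded by $Ct\,\big\|\sum_x v(x)W_x\big\|_{-1}^2$, and the $H_{-1}$ norm is
\[
\Big\|\sum_x v(x)W_x\Big\|_{-1}^2=\sup_{f\in\mathbb L^2(\nu_\rho)}\Big\{2\int\sum_x v(x)W_x\,f\,d\nu_\rho-n^aD_n(f)\Big\}.
\]
Since $n^{-1}\sum_x v^2(x)=\|v\|_{2,n}^2$, it suffices to prove that $2\int\sum_x v(x)W_x f\,d\nu_\rho\le n^aD_n(f)+\tfrac{CL}{n^a}\sum_x v^2(x)$ for every $f$.

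The heart of the matter is a telescoping decomposition. From $\bar\eta(x)\vec\eta^L(x)-(\vec\eta^L(x))^2=\vec\eta^L(x)\big(\bar\eta(x)-\vec\eta^L(x)\big)$ and
\[
\bar\eta(x)-\vec\eta^L(x)=\big(\bar\eta(x)-\bar\eta(x+1)\big)+\frac1L\sum_{z=x+1}^{x+L-1}(x+L-z)\big(\bar\eta(z)-\bar\eta(z+1)\big),
\]
one writes $W_x$ as an ``off-diagonal'' part $\frac1L\sum_{z=x+1}^{x+L-1}(x+L-z)\,\vec\eta^L(x)\big(\bar\eta(z)-\bar\eta(z+1)\big)$ plus a ``diagonal'' part $\vec\eta^L(x)(\bar\eta(x)-\bar\eta(x+1))+\tfrac1{2L}(\bar\eta(x)-\bar\eta(x+1))^2$. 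For the off-diagonal part, on each bond $\{z,z+1\}$ with $x+1\le z\le x+L-1$ the prefactor $\vec\eta^L(x)$ is a symmetric function of $\eta(x+1),\dots,\eta(x+L)$, hence invariant under $\eta\mapsto\eta^{z,z+1}$; exactly as in Proposition \ref{prop:one-block} one rewrites the corresponding integral as twice its half and exchanges $\eta\mapsto\eta^{z,z+1}$ in one copy to get $\tfrac12\int\vec\eta^L(x)(\bar\eta(z)-\bar\eta(z+1))\big(f(\eta)-f(\eta^{z,z+1})\big)d\nu_\rho$. The diagonal bond $\{x,x+1\}$ is the delicate one: here $\eta\mapsto\eta^{x,x+1}$ sends $\vec\eta^L(x)$ to $\vec\eta^L(x)+\tfrac1L(\bar\eta(x)-\bar\eta(x+1))$, so the naive symmetrization leaves a residual. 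The point of the correction term is that, using only the $\nu_\rho$-invariance of this exchange, a direct computation gives
\[
\int\Big[\vec\eta^L(x)\big(\bar\eta(x)-\bar\eta(x+1)\big)+\tfrac1{2L}\big(\bar\eta(x)-\bar\eta(x+1)\big)^2\Big]f\,d\nu_\rho=\tfrac12\int\vec\eta^L(x)\big(\bar\eta(x)-\bar\eta(x+1)\big)\big(f(\eta)-f(\eta^{x,x+1})\big)d\nu_\rho,
\]
i.e. the correction exactly cancels the residual and puts the diagonal bond on the same footing as the off-diagonal ones.

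Now Young's inequality with a parameter $A_x>0$ bounds each of these bond contributions by $\tfrac{A_x}{2}v^2(x)\big(\tfrac{x+L-z}{L}\big)^2\int(\vec\eta^L(x))^2\tfrac{(\bar\eta(z)-\bar\eta(z+1))^2}{\zeta^n_{z,z+1}}d\nu_\rho+\tfrac1{2A_x}I^n_{z,z+1}(f)$. The crucial quantitative input is that, since $\zeta^n_{z,z+1}\ge\delta$ and $\eta$ has finite fourth moment under $\nu_\rho$ (Assumption \ref{ass1}(iii)), expanding the average around the two sites $z,z+1$ and applying Cauchy--Schwarz gives $\int(\vec\eta^L(x))^2(\bar\eta(z)-\bar\eta(z+1))^2\,d\nu_\rho\le C/L$, uniformly in $x$ and in the bond. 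This is where the smallness $\|\vec\eta^L(x)\|_2^2=\chi(\rho)/L$ is used, and it is the reason the final bound is linear (and not quadratic) in $L$. Since $(x+L-z)/L\le1$, each bond term is thus at most $\tfrac{A_xC}{2L}v^2(x)+\tfrac1{2A_x}I^n_{z,z+1}(f)$.

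It remains to sum over $x$ and over bonds. Each site $x$ carries $\sim L$ bonds, so the first pieces add up to $\lesssim\tfrac{A_xC}{2}v^2(x)$ per site; choosing $A_x=L/n^a$ and summing over $x$ yields $\tfrac{CL}{2n^a}\sum_x v^2(x)$. For the Dirichlet pieces, each fixed bond $\{z,z+1\}$ occurs for $\sim L$ values of $x$, so $\sum_x\sum_z\tfrac1{2A_x}I^n_{z,z+1}(f)\le\tfrac{n^a}{2L}\cdot L\,D_n(f)=\tfrac{n^a}{2}D_n(f)$ — this is precisely the bond-multiplicity bookkeeping of Lemma \ref{lemma:dir1}. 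Distributing the Dirichlet budget between the off-diagonal and diagonal contributions gives $2\int\sum_x v(x)W_xf\,d\nu_\rho\le n^aD_n(f)+\tfrac{CL}{n^a}\sum_x v^2(x)$, hence $\big\|\sum_x v(x)W_x\big\|_{-1}^2\le\tfrac{CL}{n^a}\sum_x v^2(x)=\tfrac{CL}{n^{a-1}}\|v\|_{2,n}^2$, and the claim follows from \cite[Lemma 2.4]{KLO}. The one real obstacle is the bond $\{x,x+1\}$, where $\vec\eta^L(x)$ fails to be exchange-invariant and the correction term is needed to restore the gradient structure; everything else is the same Young-plus-Dirichlet-form estimate already used in Propositions \ref{prop:one-block}--\ref{doub box}.
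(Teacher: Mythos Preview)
Your proof is correct and follows essentially the same route as the paper's: apply \cite[Lemma 2.4]{KLO}, use the telescoping identity for $\bar\eta(x)-\vec\eta^L(x)$ and symmetrize each bond by the exchange $\eta\mapsto\eta^{z,z+1}$, observe that the single problematic bond $\{x,x+1\}$ produces exactly the correction term (in squared form, as you rightly note), and conclude via Young's inequality together with the bond-counting of Lemma~\ref{lemma:dir1}. The only cosmetic differences are that you isolate the ``diagonal'' bond $\{x,x+1\}$ from the start and choose $A_x$ independent of $v(x)$, whereas the paper carries all bonds together and takes $A_x$ proportional to $v(x)$; neither affects the argument.
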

\begin{proof}
As above we use \cite[Lemma 2.4]{KLO} and we repeat exactly the same steps, in order to get the variational formula: 
\begin{multline}\label{eq:multline}
\sup_{f\in\bb L^2(\nu_\rho)} \bigg\{2\int \sum_{x\in\mathbb{Z}}v(x)\Big\{\bar{\eta}(x)\vec{\eta}^{L}(x)-(\vec{\eta}^{L}(x))^2+\frac{1}{2L}
\big(\bar{\eta}(x)-\bar{\eta}(x+1)\big)\Big\}f(\eta)\nu_\rho(d\eta)\\ - n^a D_n(f)\bigg\}
\end{multline}
 We notice that
\begin{align}
&2\int \sum_{x\in\mathbb{Z}}v(x)\vec{\eta}^L(x)\big\{\bar{\eta}(x)-\vec{\eta}^L(x)\big\}f(\eta)\nu_\rho(d\eta)\label{eq:firstpart}\\
& = 2\int \sum_{x\in\mathbb{Z}}v(x)\bar{\eta}^L(x)\Big\{\bar{\eta}(x)-\bar{\eta}(x+1)+
\frac{L-1}{L}\big(\bar{\eta}(x+1)-\bar{\eta}(x+2)\big)\notag\\
 & +\cdots+\frac{1}{L}\big(\bar{\eta}(x+L-1)-\bar{\eta}(x+L)\big)\Big\}f(\eta)\nu_\rho(d\eta)\notag\\
&=2\int \sum_{x\in\mathbb{Z}}v(x)\vec{\eta}^L(x)\big\{\bar{\eta}(x)-\bar{\eta}(x+1)\big\}f(\eta)\nu_\rho(d\eta)\notag\\
&+2\int \sum_{x\in\mathbb{Z}}v(x)\vec{\eta}^L(x)\frac{L-1}{L}\big\{\bar{\eta}(x+1)-\bar{\eta}(x+2)\big\}f(\eta)\nu_\rho(d\eta)\notag\\
&+\cdots+ 2\int \sum_{x\in\mathbb{Z}}v(x)\vec{\eta}^L(x)\frac{1}{L}\big\{\bar{\eta}(x+L-1)-\bar{\eta}(x+L)\big\}f(\eta)\nu_\rho(d\eta).\notag
\end{align}
In each one of the terms above, we write it as twice its half, and in one of the integrals we make the change $\eta$ to $\eta^{z,z+1}$ (for some suitable $z$), for which the measure $\nu_\rho$ is invariant. Thus, the last expression equals to
\begin{align}
&\int \sum_{x\in\mathbb{Z}}v(x)\vec{\eta}^L(x)\big\{\bar{\eta}(x)-\bar{\eta}(x+1)\big\}\big(f(\eta)-f(\eta^{x,x+1})\big)\nu_\rho(d\eta) \label{eq:lem6101}\\
+&\int \sum_{x\in\mathbb{Z}}v(x)\vec{\eta}^L(x)\frac{L-1}{L}\big\{\bar{\eta}(x+1)-\bar{\eta}(x+2)\big\}\big(f(\eta)-f(\eta^{x+1,x+2})\big)\nu_\rho(d\eta) \notag\\
+&\cdots+ \int \sum_{x\in\mathbb{Z}}v(x)\vec{\eta}^L(x)\frac{1}{L}\big\{\bar{\eta}(x+L-1)-\bar{\eta}(x+L)\big\}\big(f(\eta)-f(\eta^{x+L-1,x+L})\big)\nu_\rho(d\eta)\notag\\
+&\int\sum_{x\in\mathbb{Z}}v(x)\frac{\bar{\eta}(x+1)-\bar{\eta}(x)}{L}\big\{\bar{\eta}(x)-\bar{\eta}(x+1)\big\}f(\eta)\nu_\rho(d\eta). \label{eq:lem6102}
\end{align}
Notice that the last term  \eqref{eq:lem6102} comes from the change of variables $\eta$ to $\eta^{x,x+1}$ in the first term \eqref{eq:lem6101} above.
The whole sum can be rewritten as
\begin{align}
&\int \sum_{x\in\mathbb{Z}}v(x)\vec{\eta}^L(x)\frac{1}{L}\sum_{y=x+1}^{x+L}\sum_{z=x}^{y-1}\big\{\bar{\eta}(z)-\bar{\eta}(z+1)\big\}\big\{f(\eta)-f(\eta^{z,z+1})\big\}\nu_\rho(d\eta)\label{eq:boundlem}\\
-&\int\sum_{x\in\mathbb{Z}}v(x)\frac{1}{L}(\bar{\eta}(x)-\bar{\eta}(x+1))^2f(\eta)\nu_\rho(d\eta).\label{eq:secondpart}
\end{align}
Notice that the integral that we want to estimate in \eqref{eq:multline} is exactly equal to the sum of \eqref{eq:firstpart} and \eqref{eq:secondpart}, therefore it is bounded by the first term in the previous expression, namely \eqref{eq:boundlem}.
Now, we use the same arguments as above, namely, Young's inequality with $2A_x=L v(x)/n^a $ and we bound it by
\begin{equation*}
C(\rho)\frac{L}{ n^a} \sum_{x\in\mathbb{Z}}v^2(x)+\frac{n^a}{{L^2}} \sum_{x\in\mathbb{Z}}\sum_{y=x+1}^{x+L}\sum_{z=x}^{y-1}I^n_{z,z+1}(f).
\end{equation*}
From Lemma \ref{lemma:dir1} the proof ends.
\end{proof}

\begin{lemma}\label{lemma:dir1}

\quad

 For any $\ell\in\mathbb{N}$ it holds that
\begin{equation*}
\frac{1}{\ell^2}\sum_{x\in\mathbb{Z}}\sum_{y=x+2}^{x+\ell+1}\sum_{z=x+1}^{y-1}I^n_{z,z+1}(f)= D_n(f), \quad \text{and}\quad
\frac{1}{\ell}\sum_{x\in\mathbb{Z}}\sum_{z=y+x+1}^{y+x+\ell}I^n_{z,z+1}(f)= D_n(f).
\end{equation*}
\end{lemma}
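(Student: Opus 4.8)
\emph{Plan.} The plan is to prove both identities purely combinatorially, by interchanging the order of the (nonnegative) summands and counting, for each fixed bond $\{z,z+1\}$, the multiplicity with which the term $I^n_{z,z+1}(f)$ occurs. Throughout I use that $D_n(f)=\sum_{z\in\bb Z} I^n_{z,z+1}(f)$ and that every term $I^n_{z,z+1}(f)\geq 0$, so that Tonelli's theorem justifies any rearrangement of the sums. The whole point of the normalizing prefactors $1/\ell$ and $1/\ell^2$ is that they are exactly the constants needed so that, once the multiplicity of a bond is summed over the shift $x\in\bb Z$, the resulting expression collapses back onto $D_n(f)$.

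\emph{The second identity.} This is the cleaner of the two, and I would treat it first. Fixing a bond index $z$, I count the number of $x\in\bb Z$ for which $z$ belongs to the range $\{y+x+1,\dots,y+x+\ell\}$; this amounts to $y+x+1\leq z\leq y+x+\ell$, that is $z-y-\ell\leq x\leq z-y-1$, which holds for exactly $\ell$ integers $x$, \emph{independently of the auxiliary parameter $y$}. Hence $\sum_{x\in\bb Z}\sum_{z=y+x+1}^{y+x+\ell} I^n_{z,z+1}(f)=\ell\sum_{z\in\bb Z} I^n_{z,z+1}(f)=\ell\, D_n(f)$, and dividing by $\ell$ gives the claim. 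This independence from $y$ is precisely what makes the identity usable inside the extra $y$-sum appearing in the proof of Proposition \ref{doub box}.

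\emph{The first identity.} Here I again fix the bond $z$ and interchange the three sums, so that I must count the pairs $(x,y)$ subject to $x+1\leq z\leq y-1$ and $x+2\leq y\leq x+\ell+1$. These constraints confine $(x,y)$ to a triangular region: from $x\leq z-1$ together with $y\leq x+\ell+1$ and $y\geq z+1$ one gets $x\in\{z-\ell,\dots,z-1\}$, and for each such $x$ the index $y$ runs from $\max(x+2,z+1)=z+1$ up to $x+\ell+1$. The plan is to sum the resulting number of admissible $y$ over the admissible $x$, and then to check that, after the $1/\ell^2$ normalization and the shift-sum over $x\in\bb Z$, the expression reconstitutes exactly $D_n(f)=\sum_{z\in\bb Z} I^n_{z,z+1}(f)$. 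The boundary configuration $y=z+1$, where the two candidate lower limits $x+2$ and $z+1$ coincide, has to be separated from the generic case $y\geq z+2$.

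\emph{Main obstacle.} The second identity is immediate once the $y$-independence of the count has been observed; no further input is required. The only genuinely delicate point is the triangular bookkeeping in the first identity: one must handle the nested ranges $\sum_{y=x+2}^{x+\ell+1}\sum_{z=x+1}^{y-1}$ with care, tracking the boundary contributions, so that the multiplicity attached to each bond $\{z,z+1\}$, divided by $\ell^2$, produces precisely the coefficient yielding $D_n(f)$. This careful accounting of the nested sums, rather than any analytic ingredient, is where all the work lies, since no property of $f$ beyond $D_n(f)<\infty$ enters the argument.
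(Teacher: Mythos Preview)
Your approach is correct and is essentially what the paper's one-line proof (``follows from translation invariance'') amounts to: for any fixed offset $c$, the reindexing $w=x+c$ gives $\sum_{x\in\bb Z} I^n_{x+c,x+c+1}(f)=\sum_{w\in\bb Z} I^n_{w,w+1}(f)=D_n(f)$, and then one just counts how many offsets occur. Your treatment of the second identity is complete and matches this exactly.

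There is, however, a discrepancy you will hit when you actually carry out the triangular count for the first identity. With the ranges as written, $y'\in\{2,\dots,\ell+1\}$ and $z'\in\{1,\dots,y'-1\}$, so the multiplicity of each bond is
\[
\sum_{y'=2}^{\ell+1}(y'-1)=\frac{\ell(\ell+1)}{2},
\]
not $\ell^2$. Equivalently, in your own bookkeeping: for $x=z-k$ with $1\le k\le\ell$ the number of admissible $y$ is $\ell-k+1$, and summing over $k$ gives $\ell(\ell+1)/2$. Hence the left-hand side of the first identity equals $\tfrac{\ell+1}{2\ell}\,D_n(f)$, which coincides with $D_n(f)$ only for $\ell=1$. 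The stated equality is therefore not literally correct for general $\ell$; the lemma should either carry the prefactor $2/(\ell(\ell+1))$ or be stated as an inequality.

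This does not affect anything downstream: in every application (Propositions~\ref{prop:one-block}, \ref{doub box}, \ref{prop:decomp}) only the bound $\le D_n(f)$ is used, and $\tfrac{\ell+1}{2\ell}\le 1$ for all $\ell\ge 1$. So your plan is sound; just be aware that the ``precisely'' you anticipate in the first identity will in fact be a ``$\le$'', and you should say so.
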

\begin{proof}
%By definition, we have
%\begin{equation*} D(f)=\sum_{x \in \bb Z} I_{x,x+1}(f).\end{equation*}
The result follows from the translation invariance of the measure $\nu_\rho$, namely, Assumption \ref{ass1} and the fact that $\zeta^n_{z,z+1}(\eta)=\tau_z\zeta^n_{0,1}(\eta).$
\end{proof}

\subsection{Proof of Theorem \ref{theo:BG}}

Let $\ell_0 \leq L$. The idea of the proof consists in using the following decomposition of the local function
\begin{align}\bar{\eta}(x)\bar{\eta}(x+1)& -\Big(\big(\vec{\eta}^L(x)\big)^2-\frac{\chi(\rho)}{L}\Big)\notag\\
&=\bar{\eta}(x)\big(\bar{\eta}(x+1)-\vec{\eta}^{\ell_0}(x)\big) \label{eq:term1} \vphantom{\Big(}\\
& \quad + \vec\eta^{\ell_0}(x) \big(\bar\eta(x)-\vecleft\eta^{\ell_0}(x)\big)\label{eq:term2}  \vphantom{\Big(} \\
& \quad + \vecleft\eta^{\ell_0}(x) \big( \vec\eta^{\ell_0}(x)-\vec\eta^L(x)\big) \label{eq:term3}\vphantom{\Big(}\\
& \quad +\vec\eta^L(x) \big(\vecleft\eta^{\ell_0}(x)-\bar \eta(x)\big)\label{eq:term4}\vphantom{\Big(}\\
&\quad +\vec{\eta}^L(x)\bar{\eta}(x)
-\big(\vec{\eta}^L(x)\big)^2+ \frac{\big(\bar\eta(x)-\bar\eta(x+1)\big)^2}{2L}\label{eq:term5}\vphantom{\Big(}\\
&\quad -\frac{\big(\bar\eta(x)-\bar\eta(x+1)\big)^2}{2L}+\frac{\chi(\rho)}{L}.\label{eq:term6}\vphantom{\Big(}
\end{align}
The decomposition above involves six main terms, which we treat separately. The third term \eqref{eq:term3} is the most trickiest one, for which we need to perform a \emph{multi-scale analysis}. The fifth term \eqref{eq:term5} has already been estimated in Proposition \ref{prop:decomp}.

First, we start with the estimate of  \eqref{eq:term1},   \eqref{eq:term2}  and  \eqref{eq:term4}, for which we can use directly the one-block estimate: from Proposition \ref{prop:one-block}, applied successively with $\varphi(\tau_x\eta)=\bar{\eta}(x)$, $\psi(\tau_x\eta)=\vec \eta^{\ell_0}(x)$ and $\psi(\tau_x\eta)=\vec \eta^{L}(x)$ we get that
\begin{align*}
\mathbb{E}^n_{\rho}\Big[\Big(\int_{0}^t ds \sum_{x\in\mathbb{Z}}v(x)\bar{\eta}_{sn^a}(x)\big(\bar{\eta}_{sn^a}(x+1)-\vec{\eta}_{sn^a}^{\ell_0}(x)\big)\Big)^2\Big] & \leq
 C\frac{t\ell_0^2}{n^{a-1}}\|v\|_{2,n}^2,
\\
\mathbb{E}^n_{\rho}\Big[\Big(\int_{0}^t ds \sum_{x\in\mathbb{Z}}v(x)\vec\eta^{\ell_0}_{sn^a}(x)\big(\bar{\eta}_{sn^a}(x)-\vecleft{\eta}_{sn^a}^{\ell_0}(x)\big)\Big)^2\Big] & \leq
 C\frac{t\ell_0}{n^{a-1}}\|v\|_{2,n}^2,\\
\mathbb{E}^n_{\rho}\Big[\Big(\int_{0}^t ds \sum_{x\in\mathbb{Z}}v(x)\vec\eta^{L}_{sn^a}(x)\big(\bar{\eta}_{sn^a}(x)-\vecleft{\eta}_{sn^a}^{\ell_0}(x)\big)\Big)^2\Big] & \leq
 C\frac{t\ell_0^2}{n^{a-1}L}\|v\|_{2,n}^2.
\end{align*}

As we mentioned above, the hardest term to estimate is  \eqref{eq:term3} for which we need to do a  multi-scale analysis. For this term we have:
\begin{proposition}
\label{prop:final-size}
There exists a constant $C(\rho)$, such that, for any $\ell_0\leq L$, any $t>0$ and any function $v\in\ell^2(\mathbb{Z})$:
\begin{multline*}
\mathbb{E}^n_{\rho}\Big[\Big(\int_{0}^t ds \sum_{x\in\mathbb{Z}}v(x)\vecleft{\eta}^{\ell_0}_{sn^a}(x)\big(\vec{\eta}_{sn^a}^{\ell_0}(x+1)-\vec{\eta}_{sn^a}^{L}(x+1)\big)\Big)^2\Big]
\\ \leq
 C(\rho)\frac{t}{n^{a-1}}\Big\{L+\frac{\ell_0^2}{L}\Big\} \|v\|^2_{2,n}.
\end{multline*}
\end{proposition}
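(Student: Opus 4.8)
The plan is to carry out a dyadic multi-scale decomposition of the increment $\vec\eta^{\ell_0}(x+1)-\vec\eta^L(x+1)$ by iterating the "doubling the box" estimate of Proposition~\ref{doub box}. Without loss of generality we may assume that $L = 2^M \ell_0$ for some integer $M\geq 0$ (the general case follows by comparing with the nearest dyadic scale, at the cost of harmless constants and one extra application of the one-block type estimate, which only affects the $L/n^{a-1}$ term). Writing $\ell_k = 2^k \ell_0$, we telescope
\begin{equation*}
\vec\eta^{\ell_0}(x+1)-\vec\eta^L(x+1) = \sum_{k=0}^{M-1}\Big(\vec\eta^{\ell_k}(x+1)-\vec\eta^{\ell_{k+1}}(x+1)\Big).
\end{equation*}
Inserting this into the time integral and using the triangle inequality in $\bb L^2(\P^n_\rho)$, the square root of the expectation in the statement is bounded by $\sum_{k=0}^{M-1}$ of the square roots of
\begin{equation*}
\mathbb{E}^n_{\rho}\Big[\Big(\int_{0}^t ds \sum_{x\in\mathbb{Z}}v(x)\,\vecleft{\eta}^{\ell_0}_{sn^a}(x)\big(\vec{\eta}_{sn^a}^{\ell_k}(x+1)-\vec{\eta}_{sn^a}^{\ell_{k+1}}(x+1)\big)\Big)^2\Big].
\end{equation*}

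The key point is that for each $k$ we want to apply Proposition~\ref{doub box} with $\varphi(\tau_x\eta) = \vecleft\eta^{\ell_0}(x)$ evaluated at a shifted site. The support condition in Proposition~\ref{doub box} requires that the support of $\varphi$ does not meet $\{0,\dots,\ell_{k+1}\}$; since $\vecleft\eta^{\ell_0}(x)$ depends on coordinates $x-\ell_0,\dots,x-1$, after recentering around the base point $x+1$ the relevant local function $\tau_{x+1}^{-1}$-version depends on negative coordinates only, so the support does not intersect $\{0,\dots,\ell_{k+1}\}$ — this is precisely why the left average was chosen in \eqref{eq:term3}. Thus Proposition~\ref{doub box} (after the harmless index shift $x\mapsto x+1$, which does not change $\|v\|_{2,n}$ up to a constant, or more cleanly by noting that the hypotheses are stated for a generic local $\varphi$) gives, with $\|\varphi\|_2^2 = \bb E_\rho[(\vecleft\eta^{\ell_0}(0))^2] = \chi(\rho)/\ell_0 \leq C$,
\begin{equation*}
\mathbb{E}^n_{\rho}\Big[\Big(\int_{0}^t ds \sum_{x\in\mathbb{Z}}v(x)\,\vecleft{\eta}^{\ell_0}_{sn^a}(x)\big(\vec{\eta}_{sn^a}^{\ell_k}(x+1)-\vec{\eta}_{sn^a}^{\ell_{k+1}}(x+1)\big)\Big)^2\Big]
\leq C\frac{t\,\ell_k^2}{n^{a-1}}\|v\|_{2,n}^2.
\end{equation*}

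Taking square roots and summing the geometric series, using $\ell_k = 2^k\ell_0$ and $\ell_{M-1} = L/2$, gives
\begin{equation*}
\sum_{k=0}^{M-1}\sqrt{C\frac{t\,\ell_k^2}{n^{a-1}}}\,\|v\|_{2,n}
= \sqrt{\frac{Ct}{n^{a-1}}}\,\|v\|_{2,n}\sum_{k=0}^{M-1}\ell_k
\leq 2\sqrt{\frac{Ct}{n^{a-1}}}\,\|v\|_{2,n}\,\ell_{M-1}
\leq \sqrt{\frac{Ct}{n^{a-1}}}\,\|v\|_{2,n}\, L,
\end{equation*}
so that squaring yields a bound of order $t L^2 \|v\|_{2,n}^2/n^{a-1}$. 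This is slightly worse than the claimed $\frac{t}{n^{a-1}}\{L + \ell_0^2/L\}\|v\|_{2,n}^2$, so the naive triangle inequality loses a factor; the correct step is to \emph{not} take the triangle inequality directly but instead to keep the integral of the full telescoped sum and apply Young's inequality once inside the variational formula of \cite[Lemma 2.4]{KLO}, distributing a weight $A_x^{(k)}$ across the dyadic scales. Concretely, one writes the telescoped increment as a single gradient $\frac{1}{\ell_k}\sum\sum(\bar\eta(z)-\bar\eta(z+1))$ summed over $k$, applies Young's inequality with scale-dependent constants $A_x^{(k)}$ chosen proportional to $\ell_k v(x)/n^a$ times a summable sequence in $k$, and the Dirichlet form piece sums (via Lemma~\ref{lemma:dir1}) to a constant multiple of $n^a D_n(f)$ while the error piece sums to $\frac{C}{n^a}\|v\|_{2,n}^2 n \sum_k \ell_k \cdot(\text{weight})$; optimizing the split between the $k=0$ scale (which contributes $\ell_0^2/L$ after balancing) and the top scale $k=M-1$ (which contributes $L$) produces exactly $\frac{t}{n^{a-1}}\{L+\ell_0^2/L\}$.

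The main obstacle is precisely this last bookkeeping: one must resist summing the $M\sim\log(L/\ell_0)$ error terms naively and instead treat the whole multi-scale sum \emph{at once} inside a single variational problem, choosing the Young weights $A_x^{(k)}$ so that the reconstituted Dirichlet form does not blow up by a factor $M$ and the two boundary scales are balanced. I would handle this by mimicking the proof of Proposition~\ref{doub box} but keeping the index $k$ live: after the change of variables and the gradient representation, apply Young with $2A_x^{(k)} = c\, 2^{-k/2}\,\ell_k\, v(x)/n^a$ (a geometric choice making both the $k$-sum of the error terms and the $k$-sum of the Dirichlet contributions convergent), invoke Lemma~\ref{lemma:dir1} scale by scale, and finally optimize the free constant. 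The verification that all support conditions are met at every scale (they are, uniformly, because $\vecleft\eta^{\ell_0}$ only ever sees coordinates strictly to the left of the base point while the gradients live strictly to the right) is routine, as is the reduction to the dyadic case $L=2^M\ell_0$.
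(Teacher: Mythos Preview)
Your first attempt---telescoping only the right average while keeping $\vecleft\eta^{\ell_0}$ fixed and applying Minkowski plus Proposition~\ref{doub box}---indeed loses a factor: even if you retain $\|\vecleft\eta^{\ell_0}\|_2^2=\chi(\rho)/\ell_0$ rather than bounding it by a constant, the $k$-th term contributes $Ct\ell_k^2/(\ell_0 n^{a-1})$, and after Minkowski you get $tL^2/(\ell_0 n^{a-1})$, not $tL/n^{a-1}$. Your proposed repair---keeping all scales inside a single $H_{-1}$ variational problem with geometric Young weights $A_x^{(k)}$---is left as a sketch, and the assertion that the bottom scale ``contributes $\ell_0^2/L$ after balancing'' has no visible mechanism in your decomposition: nothing in your setup ever produces a factor $1/L$ multiplying $\ell_0^2$.

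The paper's fix is different and much simpler: it also grows the \emph{left} average along the dyadic scales. One writes (with $L=2^M\ell_0$, $\ell_k=2^k\ell_0$)
\begin{align*}
\vecleft\eta^{\ell_0}(x)\big(\vec\eta^{\ell_0}(x)-\vec\eta^L(x)\big)
&= \sum_{k=0}^{M-1}\vecleft\eta^{\ell_k}(x)\big(\vec\eta^{\ell_k}(x)-\vec\eta^{\ell_{k+1}}(x)\big)\\
&\quad + \sum_{k=0}^{M-2}\vec\eta^{\ell_{k+1}}(x)\big(\vecleft\eta^{\ell_k}(x)-\vecleft\eta^{\ell_{k+1}}(x)\big)\\
&\quad + \vec\eta^L(x)\big(\vecleft\eta^{\ell_{M-1}}(x)-\vecleft\eta^{\ell_0}(x)\big),
\end{align*}
and then applies Minkowski and Proposition~\ref{doub box} term by term. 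The gain is that at scale $k$ the prefactor is now $\vecleft\eta^{\ell_k}$ (or $\vec\eta^{\ell_{k+1}}$), whose squared $\bb L^2(\nu_\rho)$-norm is $\chi(\rho)/\ell_k$; this cancels one power of $\ell_k$ in the doubling bound, leaving $Ct\ell_k/n^{a-1}$ per scale. Summing square roots gives $(\sum_k\sqrt{\ell_k})^2\leq CL$. The third line, handled by Corollary~\ref{prop:one-block3} with $\varphi=\vec\eta^L$ (so $\|\varphi\|_2^2=\chi(\rho)/L$), is exactly what produces the $\ell_0^2/L$ term. So the missing idea is this \emph{double} telescoping; once you have it, the ordinary triangle inequality suffices and no scale-dependent Young bookkeeping is needed.
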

\begin{proof}
To prove the proposition, instead of replacing $\vec\eta^{\ell_0}(x)$ by $\vec\eta^L(x)$ in one step, we do it gradually, by doubling the size of the box of size $\ell_0$ at each step. For that purpose, let $\ell_{k+1}=2\ell_k$ and assume first that $L=2^M\ell_0$ for some $M\in\bb N$.
Then, rewrite \eqref{eq:term3} as 
\begin{align}
\vecleft\eta^{\ell_0}(x) \big(\vec\eta^{\ell_0}(x)-\vec\eta^L(x)\big) 
& = \sum_{k=0}^{M-1} \vecleft\eta^{\ell_k}(x) \big(\vec\eta^{\ell_k}(x)-\vec\eta^{\ell_{k+1}}(x)\big) \label{eq:dec1}\\
&  + \sum_{k=0}^{M-2} \vec\eta^{\ell_{k+1}}(x) \big(\vecleft\eta^{\ell_k}(x)-\vecleft\eta^{\ell_{k+1}}(x)\big) \label{eq:dec2}\\
& + \vec\eta^L(x) \big(\vecleft\eta^{\ell_{M-1}}(x)-\vecleft\eta^{\ell_0}(x)\big). \vphantom{\sum_{k}^M} \label{eq:dec3}
\end{align}
By a standard convexity inequality and using Minkowski's inequality twice, the expectation in the statement of the proposition is bounded from above by
\begin{align*}
&3\bigg\{\sum_{k=0}^{M-1}\bigg(\mathbb{E}^n_{\rho}\Big[\Big(\int_{0}^t ds \sum_{x\in\mathbb{Z}}v(x)\vecleft{\eta}^{\ell_k}_{sn^a}(x)\Big\{\vec{\eta}_{sn^a}^{\ell_k}(x)-\vec{\eta}_{sn^a}^{\ell_{k+1}}(x)\Big\}\Big)^2\Big]\bigg)^{1/2}\bigg\}^2\\
& + 3\bigg\{\sum_{k=0}^{M-2}\bigg(\mathbb{E}^n_{\rho}\Big[\Big(\int_{0}^t ds \sum_{x\in\mathbb{Z}}v(x)\vec{\eta}^{\ell_{k+1}}_{sn^a}(x)\Big\{\vecleft{\eta}_{sn^a}^{\ell_k}(x)-\vecleft{\eta}_{sn^a}^{\ell_{k+1}}(x)\Big\}\Big)^2\Big]\bigg)^{1/2}\bigg\}^2 \\
& +3\mathbb{E}^n_{\rho}\Big[\Big(\int_{0}^t ds \sum_{x\in\mathbb{Z}}v(x)\vec{\eta}^L_{sn^a}(x)\big(\vecleft{\eta}^{\ell_{M-1}}_{sn^a}(x)-\vecleft{\eta}_{sn^a}^{\ell_0}(x)\big)\Big)^2\Big].
\end{align*}
From Proposition \ref{doub box} with $\varphi(\tau_x\eta)=\vecleft{\eta}^{\ell_k}(x)$ and $\psi(\tau_x\eta)=\vec{\eta}^{\ell_{k+1}}(x)$, together with Corollary  \ref{prop:one-block3} with $\varphi(\tau_x\eta)=\vec\eta^L(x)$, the previous expression is bounded from above by
\[
 C(\rho)\frac{t}{n^{a-1}}\bigg\{\frac{\ell_0^2}{L}+2\sum_{k=0}^{M-1}\ell_k    \bigg\}\|v\|^2_{2,n}\leq  C(\rho)\frac{t}{n^{a-1}}\Big\{L+\frac{\ell_0^2}{L}\Big\}  \|v\|^2_{2,n}.
\]
 In the other cases we choose $M$ sufficiently big such that $2^M\ell_0\leq L\leq 2^{M+1}\ell_0$ and a similar computation to the one above proves the claim.
\end{proof}

The last term is easily estimated by using the Cauchy-Schwarz inequality.

\begin{proposition}\label{prop:cs2}
There exists a constant $C(\rho)>0$, such that for any $L\in\bb {N}$, any $t>0$ and any function $v\in\ell^2(\mathbb{Z})$:
\begin{equation*}
\mathbb{E}^n_{\rho}\Big[\Big(\int_{0}^t ds \sum_{x\in\mathbb{Z}}v(x)
\Big\{\frac{(\bar{\eta}_{sn^a}(x)-\bar{\eta}_{sn^a}(x+1))^2}{2L}-\frac{\chi(\rho)}{L}\Big\}\Big)^2\Big]\leq
 C(\rho)\frac{ t^2n}{L^2}\|v\|_{2,n}^2.
\end{equation*}
\end{proposition}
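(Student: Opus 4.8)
The plan is to collapse the bound to a single static estimate under $\nu_\rho$, using Cauchy--Schwarz in time and stationarity, and then to exploit the facts that the integrand is centered and has support in only two sites.

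\textbf{Step 1: Cauchy--Schwarz in time and stationarity.} Write $\psi_x(\eta) := \dfrac{(\bar\eta(x)-\bar\eta(x+1))^2}{2L}-\dfrac{\chi(\rho)}{L}$ and $F(\eta):=\sum_{x\in\mathbb{Z}}v(x)\psi_x(\eta)$. By the Cauchy--Schwarz inequality applied to the time integral,
\[
\Big(\int_0^t F(\eta_{sn^a})\,ds\Big)^2 \le t\int_0^t F(\eta_{sn^a})^2\,ds .
\]
Taking $\mathbb{E}^n_\rho$ and using that $\nu_\rho$ is invariant (so $\eta_{sn^a}$ has law $\nu_\rho$ for every $s$), the right-hand side equals $t^2\,\mathbb{E}^n_\rho[F(\eta)^2]$. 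It therefore suffices to show $\mathbb{E}^n_\rho[F(\eta)^2]\le C(\rho)\,n\,L^{-2}\|v\|_{2,n}^2$.

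\textbf{Step 2: the integrand is centered, local, and $L^2$.} First, since $\nu_\rho$ is a product measure, $\int \bar\eta(x)\bar\eta(x+1)\,d\nu_\rho=0$ and $\int \bar\eta(x)^2\,d\nu_\rho=\chi(\rho)$, so $\int(\bar\eta(x)-\bar\eta(x+1))^2\,d\nu_\rho=2\chi(\rho)$ and hence $\mathbb{E}^n_\rho[\psi_x]=0$ for every $x$. Second, $\psi_x$ depends only on $\eta(x),\eta(x+1)$; by independence under $\nu_\rho$ and centering, $\mathbb{E}^n_\rho[\psi_x\psi_y]=0$ whenever $|x-y|\ge 2$. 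Third, $\psi_x=\frac{1}{2L}\big[(\bar\eta(x)-\bar\eta(x+1))^2-2\chi(\rho)\big]$, so by Assumption \ref{ass1}$(iii)$ (finiteness of the fourth moment of $\nu_\rho$) one has $\mathbb{E}^n_\rho[\psi_x^2]\le C(\rho)L^{-2}$, and by Cauchy--Schwarz $|\mathbb{E}^n_\rho[\psi_x\psi_y]|\le C(\rho)L^{-2}$ for all $x,y$.

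\textbf{Step 3: expansion of the square.} Expanding,
\[
\mathbb{E}^n_\rho[F(\eta)^2]=\sum_{x,y\in\mathbb{Z}}v(x)v(y)\,\mathbb{E}^n_\rho[\psi_x\psi_y]
=\sum_{|x-y|\le 1}v(x)v(y)\,\mathbb{E}^n_\rho[\psi_x\psi_y],
\]
and using $|v(x)v(y)|\le\frac12(v^2(x)+v^2(y))$ together with $|\mathbb{E}^n_\rho[\psi_x\psi_y]|\le C(\rho)L^{-2}$, this is bounded by $\frac{C(\rho)}{L^2}\sum_{x\in\mathbb{Z}}v^2(x)=\frac{C(\rho)\,n}{L^2}\|v\|_{2,n}^2$, where the last identity is the definition \eqref{vinl2}. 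Combining with Step 1 gives the claimed bound $C(\rho)\,t^2 n L^{-2}\|v\|_{2,n}^2$.

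There is no genuine obstacle here; the only points requiring mild care are to make sure that the $L^2(\nu_\rho)$-bound on $\psi_x$ is precisely what uses the fourth-moment hypothesis in Assumption \ref{ass1}$(iii)$, and to keep track of the factor $L^{-2}$ produced by the prefactor $\tfrac{1}{2L}$ in the definition of $\psi_x$.
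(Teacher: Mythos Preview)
Your proof is correct and matches the paper's approach: the paper does not spell out a proof of this proposition but simply says it is ``easily estimated by using the Cauchy--Schwarz inequality,'' and your three steps (Cauchy--Schwarz in time plus stationarity, then centering and two-site support under the product measure $\nu_\rho$, then expansion of the square) are exactly the standard way to make that sentence precise.
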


\section{Proof of the second-order Boltzmann-Gibbs principle for higher degree polynomial functions}\label{higher polynomial}
In this section we show how to extend Theorem \ref{theo:BG} to higher degree polynomials.
We consider the case of polynomial functions of degree three but the same result is true  for any higher degree. More precisely:
\begin{theorem}[Second-order Boltzmann-Gibbs principle for degree three polynomial functions]
\label{theo:BG3}
There exists a constant $C=C(\rho)>0$, such that for any $ L\in\N$, any  $t>0$, and any function $v\in\ell^2(\mathbb{Z})$:
\begin{multline}
\label{eq:BGexpo2}
\mathbb{E}^n_{{\rho}}\Big[\Big(\int_{0}^t \sum_{x\in\mathbb{Z}}v(x)\Big\{\bar{\eta}_{sn^{a}}(x-1)\bar{\eta}_{sn^{a}}(x)\bar{\eta}_{sn^{a}}(x+1)-\ (\vec{\eta}_{sn^a}^L(x))^3+\frac{\xi(\rho)}{L^2}\Big\} ds\Big)^2\Big]\\
\leq Ct\Big\{\frac{L}{n^{a-1}} +\frac{tn}{L^2}\Big\}\|v\|_{2,n}^2
\end{multline}
where \[
\xi(\rho):=\int_\Omega\big(\eta(x)-\rho\big)^3\;\nu_\rho(d\eta).\]
\end{theorem}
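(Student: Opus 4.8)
The plan is to mimic, step by step, the multi-scale decomposition used to prove Theorem \ref{theo:BG}, but now starting from the cubic monomial $\bar\eta(x-1)\bar\eta(x)\bar\eta(x+1)$ instead of the quadratic one. The guiding principle is that each ``replacement'' should only change one factor at a time, leaving the remaining factors as bounded (in $\bb L^2(\nu_\rho)$) local functions to which the one-block estimate (Proposition \ref{prop:one-block}), the doubling-the-box estimate (Proposition \ref{doub box}) and Corollary \ref{prop:one-block3} can be applied. Concretely, I would first write a telescoping decomposition
\begin{align*}
\bar\eta(x-1)\bar\eta(x)\bar\eta(x+1)-\big(\vec\eta^L(x)\big)^3
&= \bar\eta(x-1)\bar\eta(x)\big(\bar\eta(x+1)-\vec\eta^{\ell_0}(x)\big)\\
&\quad+ \vec\eta^{\ell_0}(x)\,\bar\eta(x-1)\big(\bar\eta(x)-\vecleft\eta^{\ell_0}(x)\big)\\
&\quad+ \vec\eta^{\ell_0}(x)\,\vecleft\eta^{\ell_0}(x)\big(\bar\eta(x-1)-\vecleft\eta^{\ell_0}(x)\big)\\
&\quad+ \text{(replace the first }\vec\eta^{\ell_0}(x)\text{ by }\vec\eta^{L}(x)\text{, then the second)}\\
&\quad+ \text{(replace }\vecleft\eta^{\ell_0}(x)\text{ by }\vec\eta^{L}(x)\text{)}\\
&\quad+ \big(\vec\eta^L(x)\big)^2\big(\vec\eta^L(x)\cdot 1 - \vec\eta^L(x)\big)+\cdots
\end{align*}
so that the right-hand side collapses to $\big(\vec\eta^L(x)\big)^3$ up to the one-block/doubling errors, each of which carries a factor $\|\varphi\|_2^2$ controlled by the finiteness of moments up to order $2d=6$ (the generalized Assumption \ref{ass1}). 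The multi-scale part — doubling $\ell_0$ to $L$ — is handled exactly as in Proposition \ref{prop:final-size}, since the ``spectator'' factors are always of the form $\vec\eta^{\ell_k}$, $\vecleft\eta^{\ell_k}$ or $\vec\eta^L$, whose $\bb L^2(\nu_\rho)$-norms are uniformly bounded.

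The second ingredient is the analogue of Proposition \ref{prop:decomp} and Proposition \ref{prop:cs2}, i.e.\ the ``final'' identity that produces the constant $\xi(\rho)/L^2$. Here, after the replacements above, one is left with a quantity of the form $\big(\vec\eta^L(x)\big)^3$, and one must check that
\[
\int_\Omega \Big[\big(\vec\eta^L(0)\big)^3 - \tfrac{\xi(\rho)}{L^2}\Big]^2\,\nu_\rho(d\eta)
\]
is of order $1/L^2$ (plus lower-order terms in $1/L$). This is a direct computation using independence under the product measure $\nu_\rho$: expanding $\big(\tfrac1L\sum_{y}\bar\eta(y)\big)^3$, the only non-vanishing contribution to the mean comes from the $L$ diagonal terms $\bar\eta(y)^3$, giving $\xi(\rho)/L^2$; the variance is then dominated by the off-diagonal cancellations and is $O(1/L^2)$ as well. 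The corresponding error term is then bounded by Cauchy-Schwarz exactly as in Proposition \ref{prop:cs2}, producing the $t^2 n/L^2$ contribution. One small point of care: unlike the degree-two case where the gradient identity of Proposition \ref{prop:decomp} was needed to extract an exact $\chi(\rho)/L$, in the cubic case the ``bulk'' term $\big(\vec\eta^L(x)\big)^2\bar\eta(x)$ — the step replacing the last centered variable by the average — can be treated by the same one-block-type estimate combined with the Cauchy-Schwarz bound on $\big(\vec\eta^L(x)\big)^2\big(\bar\eta(x)-\vec\eta^L(x)\big)$, because $\big(\vec\eta^L(x)\big)^2$ is bounded in $\bb L^2$ (using moments up to order $4$) and the remaining gradient structure is handled as in Proposition \ref{prop:decomp}; alternatively one repeats the argument of Proposition \ref{prop:decomp} verbatim with the extra bounded factor $\vec\eta^L(x)$ carried along.

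The main obstacle, and the only genuinely new bookkeeping compared with Theorem \ref{theo:BG}, is organizing the telescoping so that in every single replacement step the factor being replaced sits at the \emph{end} of a product whose remaining factors have support disjoint from the relevant block $\{0,\dots,\ell_0\}$ (for the right averages) or $\{-\ell_0,\dots,-1\}$ (for the left averages), as required by the hypotheses of Proposition \ref{prop:one-block}, Proposition \ref{doub box} and Corollary \ref{prop:one-block3}. For the cubic monomial centered at $x$ the three sites $x-1,x,x+1$ straddle the origin, so one must be slightly careful: replace $\bar\eta(x+1)$ by a \emph{right} average $\vec\eta^{\ell_0}(x)$ first (its support is $\{x+1,\dots,x+\ell_0\}$, disjoint from $\{x-1,x\}$), then replace $\bar\eta(x)$ and $\bar\eta(x-1)$ by \emph{left} averages $\vecleft\eta^{\ell_0}(x)$ (support $\{x-\ell_0,\dots,x-1\}$), at which point all spectators are left/right averages that never overlap the freshly-replaced gradient block. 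Once this ordering is fixed, every individual bound is a verbatim application of the propositions of Section \ref{sec:BG}, Minkowski's inequality is used to sum the $O(\log L)$ multi-scale contributions as in Proposition \ref{prop:final-size}, and optimizing over $\ell_0\le L$ (indeed taking $\ell_0$ of order $1$ suffices, since the dominant error is $L/n^{a-1}$) yields \eqref{eq:BGexpo2}. The generalization to degree $d$ is then immediate by the same scheme, requiring finiteness of moments up to order $2d$ precisely because the spectator factors of the form $(\vec\eta^L)^{d-1}$ must lie in $\bb L^2(\nu_\rho)$.
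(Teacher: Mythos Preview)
Your plan differs from the paper's decomposition. The paper does not telescope the three factors one at a time; it writes
\[
\bar\eta(x-1)\bar\eta(x)\bar\eta(x+1) - (\vec\eta^L(x))^3 + \tfrac{\xi(\rho)}{L^2}
= \bar\eta(x-1)\Big(\bar\eta(x)\bar\eta(x+1) - (\vec\eta^L(x))^2 + \tfrac{\chi(\rho)}{L}\Big)
+ (\vec\eta^L(x))^2\big(\bar\eta(x-1) - \vec\eta^L(x-1)\big) + R,
\]
treats the first summand by re-running the proof of Theorem~\ref{theo:BG} with the extra bounded factor $\bar\eta(x-1)$, and handles the second by a single variational computation in the style of Proposition~\ref{prop:decomp}. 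There the spectator $(\vec\eta^L(x))^2$ is \emph{not} invariant under the boundary exchange $\eta\mapsto\eta^{x,x+1}$; the resulting correction $\tfrac{L-1}{L}\big(\tfrac{(\eta(x+1)-\eta(x))^3}{2L^2}+\vec\eta^L(x)\tfrac{(\eta(x)-\eta(x+1))^2}{L}\big)$ is kept explicitly and then disposed of by Cauchy--Schwarz. It is this correction term, not your direct moment computation on $(\vec\eta^L)^3$, that produces the constant $\xi(\rho)/L^2$.

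There is a concrete gap in your telescoping, at exactly the step you flag as ``the only genuinely new bookkeeping''. The ordering you propose does \emph{not} achieve the disjointness you claim: when you replace $\bar\eta(x)$ by $\vecleft\eta^{\ell_0}(x)$, the remaining spectator still contains $\bar\eta(x-1)$, which sits at the site $x-1\in\{x-\ell_0,\dots,x-1\}$ and is therefore not invariant under the exchanges $\eta\mapsto\eta^{x-2,x-1}$ and $\eta\mapsto\eta^{x-1,x}$ used in the proof of Proposition~\ref{prop:one-block}. The same obstruction recurs in your next step (the spectator $\vecleft\eta^{\ell_0}(x)$ coincides with its own gradient block), and no reordering of the three single-site factors avoids it, because the middle variable $\bar\eta(x)$ is sandwiched and has no free side to escape into. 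The fix is precisely what the paper does: allow a single boundary overlap and carry the induced correction through the variational inequality, as in Proposition~\ref{prop:decomp} and the proposition of Section~\ref{higher polynomial}. You invoke this mechanism only for the final step, but it is needed earlier and is the heart of the cubic argument; the intermediate replacements cannot be obtained as ``verbatim applications'' of Propositions~\ref{prop:one-block}--\ref{prop:one-block3}.
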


The idea of the proof is similar to the one used above: it consists in using the following decomposition of the local function
\begin{align}
&\bar{\eta}(x-1)\bar{\eta}(x)\bar{\eta}(x+1) -(\vec{\eta}^L(x))^3+\frac{\xi(\rho)}{L^2}\label{eq:1}\\
&=\bar{\eta}(x-1)\Big(\bar{\eta}(x)\bar{\eta}(x+1)-\big(\vec{\eta}^L(x)\big)^2+\frac{\chi(\rho)}{L}\Big)\label{eq:2}\\
& \quad +\big(\vec{\eta}^L(x)\big)^2 \Big(\bar{\eta}(x-1)-\vec{\eta}^L(x-1)\Big)\label{eq:3}\\
&\quad +\frac{L-1}{L}\Big(\frac{(\eta(x+1)-\eta(x))^3}{2L^2}+\vec{\eta}^L(x)\frac{(\eta(x)-\eta(x+1))^2}{L}\Big)\label{eq:3bis}\\
&\quad-\frac{L-1}{L}\Big(\frac{(\eta(x+1)-\eta(x))^3}{2L^2}\Big)\label{eq:4}\\
& \quad - \frac{L-1}{L}\Big(\vec{\eta}^L(x)\frac{(\eta(x)-\eta(x+1))^2}{L}-\frac{\xi(\rho)}{L^2}\Big)\label{eq:6}\\
& \quad - \bar{\eta}(x-1) \frac{\chi(\rho)}{L} - \frac{\xi(\rho)}{L^2}\Big(\frac{L-1}{L}-1\Big)\label{eq:6bis}\\
&\quad + {\big(\vec\eta^L(x)\big)^2}\Big(\vec\eta^L(x-1)-{\vec\eta^L(x)}\Big).\label{eq:7}
\end{align}
The first term \eqref{eq:2} can be  treated as in Theorem \ref{theo:BG} and it gives an error of order $Ct\big\{L/n^{a-1}+tn/L^2\big\}$. In order to help the reader to check that claim, let us notice that \eqref{eq:2} rewrites as
\begin{align*}
& \bar{\eta}(x-1)\Big(\bar{\eta}(x)\bar{\eta}(x+1)-\big(\vec{\eta}^L(x)\big)^2+\frac{\chi(\rho)}{L}\Big) \\
& = \Big(\bar\eta(x-1)-\vecleft\eta^{\ell_0}(x-1)\Big)\Big(\bar{\eta}(x)\bar{\eta}(x+1)-\big(\vec{\eta}^L(x)\big)^2+\frac{\chi(\rho)}{L}\Big)\\
& \quad + \vecleft\eta^{\ell_0}(x-1)\bar\eta(x)\Big(\bar\eta(x+1)-\vec\eta^{\ell_0}(x)\Big)\\
&\quad +  \vecleft\eta^{\ell_0}(x-1) \bar\eta(x)\Big(\vec\eta^{\ell_0}(x)-\vec\eta^L(x)\Big)\\
& \quad + \vecleft\eta^{\ell_0}(x-1) \Big(\bar\eta(x)\vec\eta^L(x) - \big(\vec\eta^L(x)\big)^2+\frac{\chi(\rho)}{L}\Big).
\end{align*}
Now notice that
\[
\int_\Omega(\eta(x)-\eta(x+1))^3\nu_\rho(d\eta)=0,
\]
and
\[
\int_\Omega \vec{\eta}^L(x)\frac{(\eta(x)-\eta(x+1))^2}{L}\nu_\rho(d\eta)=\frac{\xi(\rho)}{L^2}.
\]
Therefore the terms \eqref{eq:4}, \eqref{eq:6} and \eqref{eq:6bis} are treated with the Cauchy-Schwarz inequality as in Proposition \ref{prop:cs2}. In the same way, the term \eqref{eq:7}  can be easily treated with the Cauchy-Schwarz inequality and using independence with respect to the invariant measure $\nu_\rho$. All of them  give an error of order at most $t^2n/L^2$. The only term that requires a little bit of work is \eqref{eq:3}+\eqref{eq:3bis}. This is the content of the following proposition.

\begin{proposition}
There exists a constant $C=C(\rho)>0$, such that for any $ L\in\N$, any  $t>0$, and any function $v\in\ell^2(\mathbb{Z})$:
\begin{equation*}
\begin{split}
\mathbb{E}^n_{\rho}\Big[\Big(\int_{0}^t ds \sum_{x\in\mathbb{Z}}&v(x)\Big[\big(\vec{\eta}^L(x)\big)^2\big(\bar{\eta}(x-1)-\vec{\eta}^L(x-1)\big)\\
&+\frac{L-1}{L}\Big\{\frac{(\eta(x+1)-\eta(x))^3}{2L^2}+\vec{\eta}^L(x)\frac{(\eta(x)-\eta(x+1))^2}{L}\Big\}\Big]\Big)^2\Big] \\&\leq
 C(\rho)\frac{tL}{n^{a-1}}\|v\|_{2,n}^2.
\end{split}
\end{equation*}
\end{proposition}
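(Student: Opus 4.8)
The plan is to mimic the variational approach used in Propositions~\ref{prop:one-block}, \ref{doub box} and \ref{prop:decomp}: apply \cite[Lemma 2.4]{KLO} to bound the expectation by $Ct$ times an $H_{-1}$-norm, express the latter via the Rayleigh-type variational formula involving $\sup_{f\in\bb L^2(\nu_\rho)}\{2\int(\cdots)f\,d\nu_\rho-n^aD_n(f)\}$, and then reduce everything to telescoping sums of discrete gradients $\bar\eta(z)-\bar\eta(z+1)$ so that the usual Young's inequality plus Lemma~\ref{lemma:dir1} closes the estimate. The point of grouping \eqref{eq:3} with \eqref{eq:3bis} is precisely that the ``bad'' term $\big(\vec\eta^L(x)\big)^2\big(\bar\eta(x-1)-\vec\eta^L(x-1)\big)$, which would otherwise produce an error worse than $tL/n^{a-1}$ if treated naively (because the prefactor $(\vec\eta^L(x))^2$ is not a local function in the scale-free sense), becomes amenable once the correction terms from the change of variables $\eta\mapsto\eta^{z,z+1}$ are added back in — exactly as $\frac1{2L}(\bar\eta(x)-\bar\eta(x+1))^2$ appeared in Proposition~\ref{prop:decomp}.

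Concretely, I would first write $\bar\eta(x-1)-\vec\eta^L(x-1)$ as a telescoping gradient sum. Since $\vec\eta^L(x-1)=\frac1L\sum_{y=x}^{x+L-1}\bar\eta(y)$, one has
\[
\bar\eta(x-1)-\vec\eta^L(x-1)=\frac1L\sum_{y=x}^{x+L-1}\big(\bar\eta(x-1)-\bar\eta(y)\big)=\frac1L\sum_{y=x}^{x+L-1}\sum_{z=x-1}^{y-1}\big(\bar\eta(z)-\bar\eta(z+1)\big).
\]
Plugging this into the integral $2\int\sum_x v(x)\big(\vec\eta^L(x)\big)^2\big(\bar\eta(x-1)-\vec\eta^L(x-1)\big)f(\eta)\,d\nu_\rho$, I write it as twice its half and in one copy perform the swap $\eta\mapsto\eta^{z,z+1}$, under which $\nu_\rho$ is invariant but $\big(\vec\eta^L(x)\big)^2$ is \emph{not} invariant for those $z$ with $x\le z\le x+L-1$ — and this is the crux: the discrepancy $\big(\vec\eta^L(x)\big)^2-\big((\vec\eta^L)^{z,z+1}(x)\big)^2$ generates, after collecting over $z$, precisely the second-moment and third-moment correction terms $\vec\eta^L(x)\tfrac{(\eta(x)-\eta(x+1))^2}{L}$ and $\tfrac{(\eta(x+1)-\eta(x))^3}{2L^2}$ appearing in \eqref{eq:3bis} (up to the $\tfrac{L-1}{L}$ factor which comes from the multiplicity of the relevant bond). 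After this cancellation, what remains is of the form $\int\sum_x v(x)\big(\vec\eta^L(x)\big)^2\tfrac1L\sum_{y,z}\big(\bar\eta(z)-\bar\eta(z+1)\big)\big(f(\eta)-f(\eta^{z,z+1})\big)\,d\nu_\rho$, to which Young's inequality with $2A_x=Lv(x)/n^a$ applies exactly as in Proposition~\ref{prop:decomp}: the first term is $\le C(\rho)\tfrac{L}{n^a}\sum_x v^2(x)$ using that $\int\big(\vec\eta^L(x)\big)^4\,d\nu_\rho$ is bounded uniformly in $L$ (by the fourth-moment hypothesis in Assumption~\ref{ass1}, since $\vec\eta^L$ is an average of i.i.d.\ centered variables), and the second term is absorbed by $n^aD_n(f)$ via Lemma~\ref{lemma:dir1}. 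Combining with the $Ct$ factor from \cite[Lemma 2.4]{KLO} yields the claimed bound $C(\rho)\tfrac{tL}{n^{a-1}}\|v\|_{2,n}^2$.

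The main obstacle I anticipate is bookkeeping the algebra of the change of variables: one must verify that the quadratic-in-$\vec\eta^L$ prefactor expands cleanly as $\big((\vec\eta^L)^{z,z+1}(x)\big)^2=\big(\vec\eta^L(x)\big)^2+\tfrac1L\big(\bar\eta(z+1)-\bar\eta(z)\big)\cdot(\text{something})$ only for the single relevant bond $z=x$ (all interior bonds $z\in\{x+1,\dots,x+L-1\}$ leave $\vec\eta^L(x)$ unchanged since the swap permutes two coordinates both lying inside the averaging window), and that the resulting cross-term, once multiplied by the gradient $\bar\eta(z)-\bar\eta(z+1)$ it pairs with, reproduces \emph{exactly} the combination in \eqref{eq:3bis} with the stated coefficients — including the cubic term, which arises from the product of the gradient with the linear-in-$\bar\eta$ piece of the prefactor discrepancy. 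A careful but routine expansion of $(a+\delta)^2=a^2+2a\delta+\delta^2$ with $a=\vec\eta^L(x)$ and $\delta=\tfrac1L(\bar\eta(x)-\bar\eta(x+1))$ (when $z=x$, only $\bar\eta(x)$ is inside the window $\{x,\dots,x+L-1\}$, not $\bar\eta(x+1)$, so in fact $\delta=-\tfrac1L\bar\eta(x+1)+\tfrac1L\bar\eta(x)$ requires care about which endpoint is swapped in/out) handles this; everything else is a verbatim repetition of the arguments already carried out. Once the identity is checked, the estimate itself presents no new analytic difficulty.
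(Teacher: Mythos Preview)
Your proposal is correct and takes essentially the same approach as the paper: apply \cite[Lemma 2.4]{KLO}, telescope $\bar\eta(x-1)-\vec\eta^L(x-1)$ into nearest-neighbour gradients, perform the swap $\eta\mapsto\eta^{z,z+1}$ and observe that only the bond $z=x$ affects $(\vec\eta^L(x))^2$, so that the resulting correction (with multiplicity $(L-1)/L$) cancels the terms in \eqref{eq:3bis} exactly, then close with Young's inequality and Lemma~\ref{lemma:dir1}. Your only slip is in the parenthetical about the window of $\vec\eta^L(x)=\tfrac1L\sum_{y=x+1}^{x+L}\bar\eta(y)$: at bond $\{x,x+1\}$ it is $\bar\eta(x+1)$ that lies inside and $\bar\eta(x)$ outside, but this still yields $\delta=\tfrac1L(\bar\eta(x)-\bar\eta(x+1))$ as you wrote and does not affect the argument.
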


\begin{proof}
By \cite[Lemma 2.4]{KLO}, we can bound the previous expectation from above by
\begin{multline*}
Ct\;\sup_{f\in {\bb L}^2(\nu_\rho)}\Big\{2\int \sum_{x\in\mathbb{Z}}v(x)\Big[\big(\vec{\eta}^L(x)\big)^2 \big(\bar{\eta}(x-1)-\vec{\eta}^L(x-1)\big)
+  \frac{L-1}{L} \times\\\Big\{\frac{(\eta(x+1)-\eta(x))^3}{2L^2}+\vec{\eta}^L(x)\frac{(\eta(x)-\eta(x+1))^2}{L}\Big\}\Big]f(\eta)\nu_\rho(d\eta)-n^aD_n(f)\Big\}.
\end{multline*}
As above, we notice that
\begin{align*}
&2\int \sum_{x\in\mathbb{Z}}v(x)\big(\vec{\eta}^L(x)\big)^2\big\{\bar{\eta}(x-1)-\vec{\eta}^L(x-1)\big\}f(\eta)\nu_\rho(d\eta)\\
& =2\int \sum_{x\in\mathbb{Z}}v(x)\big(\vec{\eta}^L(x)\big)^2\big\{\bar{\eta}(x-1)-\bar{\eta}(x)\big\}f(\eta)\nu_\rho(d\eta)\\
&+2\int \sum_{x\in\mathbb{Z}}v(x)\big(\vec{\eta}^L(x)\big)^2\;\frac{L-1}{L}\big\{\bar{\eta}(x)-\bar{\eta}(x+1)\big\}f(\eta)\nu_\rho(d\eta)\\
&+\cdots+ 2\int \sum_{x\in\mathbb{Z}}v(x)\big(\vec{\eta}^L(x)\big)^2\;\frac{1}{L}\big\{\bar{\eta}(x+L-2)-\bar{\eta}(x+L-1)\big\}f(\eta)\nu_\rho(d\eta).
\end{align*}
By writing each term as twice its half and doing the  exchange $\eta$ to $\eta^{z,z+1}$ (for some suitable $z$), last expression becomes equal to
\begin{align}
&\int \sum_{x\in\mathbb{Z}}v(x)\big(\vec{\eta}^L(x)\big)^2\big\{\bar{\eta}(x-1)-\bar{\eta}(x)\big\}\big(f(\eta)-f(\eta^{x-1,x})\big)\nu_\rho(d\eta) \notag\\
+&\int \sum_{x\in\mathbb{Z}}v(x)\big(\vec{\eta}^L(x)\big)^2\;\frac{L-1}{L}\big\{\bar{\eta}(x)-\bar{\eta}(x+1)\big\}\big(f(\eta)-f(\eta^{x,x+1})\big)\nu_\rho(d\eta) \label{eq:lem6101bisbis}\\
+&\cdots +\notag\\+& \int \sum_{x\in\mathbb{Z}}v(x)\big(\vec{\eta}^L(x)\big)^2\;\frac{\bar{\eta}(x+L-2)-\bar{\eta}(x+L-1)}{L} \big(f(\eta)-f(\eta^{x+L-2,x+L-1})\big)\nu_\rho(d\eta)\notag\\
-&\int\sum_{x\in\mathbb{Z}}v(x)\frac{L-1}{L}\Big\{\frac{(\bar{\eta}(x+1)-\bar{\eta}(x))^3}{L^2}+2\vec{\eta}^L(x)\frac{\big(\bar{\eta}(x)-\bar{\eta}(x+1)\big)^2}{L}\Big\}f(\eta)\nu_\rho(d\eta).\notag
\end{align}
The last term comes from the change of variables $\eta$ to $\eta^{x,x+1}$ in the second term \eqref{eq:lem6101bisbis}.
The whole expression above can be rewritten as
 \begin{align}
&\int \sum_{x\in\mathbb{Z}}v(x)\big(\vec{\eta}^L(x)\big)^2\frac{1}{L}\sum_{y=x}^{x+L-1}\sum_{z=x-1}^{y-1}\big\{\bar{\eta}(z)-\bar{\eta}(z+1)\big\}\big(f(\eta)-f(\eta^{z,z+1})\big)\nu_\rho(d\eta) \label{eq:firstbis}\\
-&\int\sum_{x\in\mathbb{Z}}v(x)\frac{L-1}{L}\Big\{\frac{(\bar{\eta}(x+1)-\bar{\eta}(x))^3}{L^2}+2\vec{\eta}^L(x)\frac{\big(\bar{\eta}(x)-\bar{\eta}(x+1)\big)^2}{L}\Big\}f(\eta)\nu_\rho(d\eta).\notag
\end{align}
Now, the integral that we want to control is bounded by \eqref{eq:firstbis}.
By the same arguments as above, namely, by Young's inequality with $2A_x= L v(x)/n^a $, we bound it by
\begin{equation*}
C(\rho)\frac{L}{ n^a} \sum_{x\in\mathbb{Z}}v^2(x)+\frac{n^a}{{L^2}} \sum_{x\in\mathbb{Z}}\sum_{y=x+1}^{x+L}\sum_{z=x}^{y-1}I^n_{z,z+1}(f).
\end{equation*}
From Lemma \ref{lemma:dir1} the proof ends.
\end{proof}

\begin{acknowledgement}
This work benefited from the support of the project EDNHS
ANR-14-CE25-0011 of the French National Research Agency (ANR).  PG thanks FCT/Portugal for support through the project UID/MAT/04459/2013. The work of MS was supported by  CAPES (Brazil) and IMPA (Instituto de Matematica Pura e Aplicada, Rio de Janeiro) through a post-doctoral fellowship, and  in part by the Labex CEMPI (ANR-11-LABX-0007-01).
\end{acknowledgement}


\begin{thebibliography}{99.}%
% and use \bibitem to create references.

\bibitem{BG} C. Bernardin and P. Gon\c calves, \emph{Anomalous Fluctuations for a Perturbed Hamiltonian System with Exponential Interactions}, Commun. Math. Phys, Volume 325, 291--332 (2014).


\bibitem{TB} T. Brox and H. Rost, \emph{Equilibrium fluctuations of stochastic particle systems: the role of
conserved quantities}, Ann. Probab, Volume 12, Issue 3, 742--759 (1984).

\bibitem{DieGubPer} J. Diehl, M. Gubinelli and N. Perkowski, \emph{Equilibrium fluctuations for the weakly asymmetric Ginzburg-Landau model}, work in progress (private communication) (2016).



\bibitem{FGS} T. Franco, P. Gon\c calves and M. Simon,  \emph{Crossover fluctuations of the WASEP with a slow bond}, Preprint Arxiv:1506.06560, to appear in Communications in Mathematical Physics (2016). 

\bibitem{G} P. Gon\c calves,  \emph{Central Limit Theorem for a Tagged Particle in Asymmetric Simple Exclusion}, Stochastic Processes and their Applications, Volume 118, 474--502 (2008).

\bibitem{gj2014} P. Gon\c calves and  M. Jara, \emph{Nonlinear fluctuations of weakly asymmetric interacting particle systems}, Archive for Rational Mechanics and Analysis, Volume 212, Issue 2, 597--644 (2014).


\bibitem{GJS}
P. Gon\c calves, M. Jara and S. Sethuraman, \emph{A stochastic Burgers equation from a class of microscopic interactions}, Annals of Probability, Volume 43, Issue 1, 286--338 (2015). 

\bibitem{GubJar} M. Gubinelli and M. Jara , \emph{Regularization by noise and stochastic Burgers equations}, SPDEs: Analysis and Computations Volume 1,  325--350 (2013).

\bibitem{GubPer} M. Gubinelli  and N. Perkowski, \emph{Energy solutions of KPZ are unique}, Preprint Arxiv:1508.07764 (2015).

\bibitem{GubPerEBP} M. Gubinelli and N. Perkowski, \emph{
Lectures on singular stochastic PDE's}, Ensaios Matem\' aticos, Sociedade Brasileira de Matem\' atica, Volume 29, 1--89 (2015).  

%\bibitem{GLT}   P. Gon\c calves, C. Landim, and C. Toninelli, \emph{Hydrodynamic limit for a particle system with degenerate rates}, Ann. Inst. H. P.  Probab. Statist, Volume 45, Number 4, 887--909 (2009).


\bibitem{KLO} T. Komorowski, C. Landim and S. Olla, \emph{Fluctuations in Markov processes}, Grundlehren der Mathematischen
Wissenschaften [Fundamental Principles of Mathematical Sciences], Volume 345, Springer,
Heidelberg (2012).

\bibitem{LV} C. Landim and M. E. Vares, \emph{Equilibrium fluctuations for exclusion processes with speed change}, Stoch. Process. App, Volume 52, Issue 1, 107--118 (1994).

\bibitem{S} H. Spohn, \emph{Large Scale Dynamics of Interacting Particles}, Springer-Verlag (1991).

\end{thebibliography}
\end{document}